\documentclass{amsart}

\usepackage{amsthm,amssymb,latexsym,amsmath}
\usepackage[all]{xy}
\usepackage[english]{babel}
\usepackage[utf8x]{inputenc}

\newcommand{\CC}{{\mathbb C}}
\newcommand{\p}[1]{{\mathbb{P}^{#1}}}

\newcommand{\osigma}{{\mathcal O}_{\Sigma}}
\newcommand{\PP}{{\mathbb P}}
\newcommand{\op}[1]{{\mathcal O}_{\mathbb{P}^{#1}}}
\newcommand{\opn}{{\mathcal O}_{\mathbb{P}^n}}
 \newcommand{\ox}{{\mathcal O}_{X}}
\newcommand{\ol}{{\mathcal O}_{\ell}}
\newcommand{\pr}{\operatorname{pr}\nolimits}
\newcommand\pt{{\mathrm{\hspace{0.2ex}p\hspace{-0.23ex}t}}}
\newcommand{\C}{\mathbb{C}}
\newcommand{\cald}{{\mathcal D}}
\newcommand{\calr}{{\mathcal R}}
\newcommand{\calq}{{\mathcal Q}}
\newcommand{\calb}{{\mathcal B}}

\newcommand{\cale}{{\mathcal E}}

\newcommand{\cali}{{\mathcal I}}
\newcommand{\calj}{{\mathcal J}}

\newcommand{\calm}{{\mathcal M}}
\newcommand{\calo}{{\mathcal O}}
\newcommand{\calp}{{\mathcal P}}

\newcommand{\calz}{{\mathcal Z}}

\newcommand{\HOM}{{\mathcal H}{\it om}}
\newcommand{\inhom}{{\mathcal H}{\it om}}
\newcommand{\inext}{{\mathcal E}{\it xt}}
\newcommand{\intor}{{\mathcal T\!}{\it or}}
\newcommand{\simto}{\stackrel{\sim}{\to}}

\DeclareMathOperator{\ext}{Ext}
\DeclareMathOperator{\Hom}{Hom}

\DeclareMathOperator{\coker}{coker}
\DeclareMathOperator{\im}{im}

\newcommand\dual{{\scriptscriptstyle{\vee}}}

\newcommand\lra{{\longrightarrow}}

\newcommand{\into}{\hookrightarrow}
\newcommand{\onto}{\twoheadrightarrow}

\newlength{\rrrr}

\newcommand{\isoto}{{\lra\hspace{-1.3 em}
\raisebox{ 0.6 ex}{$\textstyle\sim$}\hspace{0.8 em}}}

\newcommand{\intoo}[1]{\:
\xymatrix@1{\ar@{^(->}[r]^{#1}&}\:}
\newcommand{\ontoo}[1]{\:
\xymatrix@1{\ar@{->>}[r]^{#1}&}\:}

\newcommand\Ext{\operatorname{Ext}\nolimits}

\newcommand\smallvee{{\scriptscriptstyle{\vee}}}

\newtheorem{theorem}{Theorem}[section]

\newtheorem{proposition}[theorem]{Proposition}
\newtheorem{lemma}[theorem]{Lemma}
\newtheorem{corollary}[theorem]{Corollary}

\newtheorem{remark}[theorem]{Remark}

\begin{document}

\title{New divisors in the boundary of the instanton moduli space}

\author{Marcos Jardim}
\address{IMECC - UNICAMP \\
Departamento de Matem\'atica \\ Caixa Postal 6065 \\
13083-970 Campinas-SP, Brazil}
\email{jardim@ime.unicamp.br}

\author{Dimitri Markushevich}
\address{Math\'ematiques -- b\^{a}t.~M2\\
Universit\'e Lille 1\\
F-59655 Villeneuve d'Ascq Cedex, France}
\email{markushe@math.univ-lille1.fr}

\author{Alexander S. Tikhomirov}
\address{Department of Mathematics\\
Higher School of Economics\\
7 Vavilova Street\\ 
117312 Moscow, Russia}
\email{astikhomirov@mail.ru}

\begin{abstract}
Let $\cali(n)$ denote the moduli space of rank $2$ instanton bundles of charge $n$ on $\p3$. We know from \cite{CO,JV,T1,T2} that $\cali(n)$ is an irreducible, nonsingular and affine variety of dimension
$8n-3$. Since every rank $2$ instanton bundle on $\p3$ is stable, we may regard $\cali(n)$ as an open subset of the projective Gieseker--Maruyama moduli scheme $\calm(n)$ of rank $2$ semistable torsion free sheaves
$F$ on $\p3$ with Chern classes $c_1=c_3=0$ and $c_2=n$, and consider the closure $\overline{\cali(n)}$ of
$\cali(n)$ in $\calm(n)$.

We construct some of the irreducible components of dimension $8n-4$ of the boundary 
$\partial\cali(n):=\overline{\cali(n)}\setminus\cali(n)$. These components generically lie in the smooth locus of $\calm(n)$ and consist of rank $2$ torsion free instanton sheaves with singularities along rational curves.
\end{abstract}
\maketitle

\section{Introduction}

A {\it mathematical instanton of charge $n$} is a holomorphic rank $2$
vector bundle $E$ on the projective space $\mathbb{P}^3$ with Chern classes
\begin{equation}\label{Chern classes}
c_1(E)=0,\ \ \ c_2(E)=n,
\end{equation}
satisfying the vanishing conditions
\begin{equation}\label{vanishing condns}
h^0(E(-1))=h^1(E(-2))=0.
\end{equation}
The epithet ``mathematical'', which will be omitted in the remainder of the paper, distinguishes these objects from {\it physical} instantons. The latter are anti-self-dual $SU(2)$-connections on $S^4$,  which give rise, by the Atiyah--Ward correspondence, to vector bundles $E$ as above with some additional ``reality'' conditions.

We denote by $\cali(n)$ the moduli space of instantons of charge $n$. Nowadays it is known that $\cali(n)$ is a nice variety possessing some natural properties which were conjectured long ago, but whose proof remained an open problem for many years: $\cali(n)$ is affine \cite{CO}, nonsingular \cite{JV}, and irreducible of dimension $8n-3$ \cite{T1,T2}. Since every instanton is $\mu$-stable and $\mu$-stability is the same as Gieseker stability for rank $2$ bundles, $\cali(n)$ can be regarded as an open subset within the projective Maruyama moduli space $\calm(n)$ of Gieseker semistable rank $2$ sheaves $F$ on $\p3$ with $c_1(F)=c_3(F)=0$ and $c_2(F)=n$.

Let $\overline{\cali(n)}$ denote the closure of $\cali(n)$ within $\calm(n)$. Our goal is to approach the understanding of the boundary
$\partial\cali(n):=\overline{\cali(n)}\setminus\cali(n)$ of this compactification.
Partial results in this direction are already known; in particular, it is clear that $\partial\cali(n)$ has, in general, several irreducible components. These components have been completely determined only for charges $c_2=1$ \cite{B} and $c_2=2$ \cite{NT}. The case $c_2=3$ was partially treated in \cite{GS,Per1,Per3}. Perrin in \cite[Remarque 3.6.8]{Per3} cites a conjecture of Gruson and Trautmann, suggesting that the boundary of $\cali(3)$ consists of 8 irreducible divisors. He also advances towards a proof of this conjecture by constructing two out of the eight conjectural boundary divisors, in addition to the three ones constructed before.

All the boundary components for charges $c_2\leq 3$, known or conjectural, are divisorial and consist of {\em non locally free sheaves}. A new phenomenon occurs for $c_2=5$: in \cite{Rao},
Rao shows the existence of a divisorial component of $\partial\cali(5)$ whose generic point represents a {\em vector bundle}, for which the vanishing condition $h^1(E(-2))=0$ fails.

In the present paper we describe $n$ irreducible components of $\partial\cali(n)$, whose generic points represent instanton {\em sheaves}. By definition,
these are rank 2 torsion free sheaves $E$ on $\mathbb P^3$ satisfying
\eqref{Chern classes}, \eqref{vanishing condns}, but also the additional vanishing conditions
$h^{2}(E(-2))=h^3(E(-3))=0$, which, by Serre duality, are automatically satisfied when $E$ is locally free.
Thus, for example, Rao's component of $\partial\cali(5)$ does not satisfy
these vanishing conditions and is not of the type we are interested in. Also one can see that among the eight Gruson--Trautmann (partly conjectural) components of $\partial\cali(3)$, only four parametrize instanton sheaves in our terminology. One can say that the instanton sheaves provide a {\em partial} compactification of $\cali(n)$, and we study the boundary of this partial compactification.

Our main result is that for any $n\geq 1$, $\partial\cali(n)$ contains $n$ irreducible divisors $\overline{\cald(m,n)}$, where $m=1,\ldots,n$, of $\overline{\cali(n)}$, whose generic points represent instanton sheaves and are smooth points of $\calm(n)$. A sheaf in $\overline{\cald(m,n)}$, generically, is singular along a normal rational curve of degree $m$. By \cite{JG}, the singular locus of any instanton sheaf, if nonempty,  is a scheme of pure dimension 1. An important invariant of a non locally free instanton sheaf $E$ is $Q_E:=E^{\smallvee\smallvee}/E$, which is a pure 1-dimensional sheaf supported on the singular locus of $E$. A necessary condition for a pure 1-dimensional sheaf $Q$ to be $Q_E$ for some $E\in \partial\cali(n)$ was determined in \cite[Thm. 0.1]{Per4}: $Q(-2)$ should be a theta-characteristic of its supporting 1-dimensional scheme. 

Our boundary components $\overline{\cald(m,n)}$ coincide with some of the components considered in \cite{B} (case $n=1$), \cite{NT} (case $n=2$) and \cite{Per1,Per3,Per4} (case $n=3$). For $n>3$, the components here presented are new. However, our components  are not sufficient to exhaust the boundary of the partial compactification by instanton sheaves, for one\sloppy\ of the components of $\partial\cali(3)$ presented in \cite{Per3} is associated to theta characteristics of plane cubic curves in $\mathbb P^3$. We still do not know which pure 1-dimensional sheaves do occur as singular locus of non locally free instanton sheaves in the boundary of $\cali(n)$. Nonetheless, we expect that our examples, associated to theta characteristics of smooth rational curves, are singled out by that they are smooth points of $\calm(n)$. We provide an evidence to support this expectation by proving that the instanton sheaves which are singular along elliptic curves of degree $\leq 4$, form a separate component of $\calm(n)$. 

Now we will briefly describe the content of the paper by sections. 
In Section 2, we introduce instanton sheaves and provide some relevant properties. In Section 3, we prove basic properties of the main tool for constructing non locally free instanton sheaves: elementary transformations along a curve in $\mathbb P^3$ endowed with a line bundle $Q$ satisfying $h^0(Q(-2))=h^1(Q(-2))=0$. In Section 4 we prove the stability of the sheaves obtained
by elementary transformations from instanton bundles. Section 5 contains calculations with local and global exts involving a sheaf $F$, obtained by an elementary transformation, with a view towards computing the infinitesimal deformation and obstruction spaces $\Ext^i(F,F)\ (i=1,2)$.

In Section 6 we construct the irreducible $(8n-4)$-dimensional irreducible
subvarieties $\overline{\cald(m,n)}$ of $\calm(n)$ and their open subsets $\cald(m,n)$, parametrizing instanton sheaves $F$ obtained by elementary transformations of instanton bundles of charge $n$ along rational normal curves of degree $m$ (Proposition \ref{variety D}). We also prove that for an instanton sheaf $F$ from $\cald(m,n)$, we have $\dim\Ext^1(F,F)=8n-3$ and $\Ext^2(F,F)=0$, so that $\calm(n)$ is smooth of local dimension $8n-3$ at $F$ (Proposition \ref{prop 4.1}).

In Section 7, we prove the main result of the paper, stating that
$\overline{\cald(m,n)}$ are divisors lying in the boundary
$\partial\cali(n)$ of the instantons of charge $n$ for each $m=1,\ldots,n$ (Theorem \ref{Thm 4.6}).
This is done by induction on $m$ and $n$, using deformations of elementary transforms of several types along 1-dimensional sheaves with supports on reducible rational curves.

In Section 8, we show that the elementary transformations of null-correlation instanton bundles along elliptic quartic curves provide a generically smooth irreducible component
of $\calm(5)$ of dimension 37, equal to the dimension of $\cali(5)$ (Theorem \ref{thm-quartics}). We also remark that somewhat easier arguments prove the following similar result: the elementary transforms of $\mathcal O_{\mathbb P^3}^{\oplus 2}$ along plane cubic curves fill an irreducible component of $\calm(3)$ of dimension 21, equal to the dimension of $\cali(3)$. This property implies, in particular, that the boundary divisor of $\cali(3)$ described by Perrin in \cite{Per3}, associated to plane cubic curves, is a non normal singularity of $\calm(3)$, given by the intersection of this new irreducible component with $\overline{\cali(3)}$.

Throughout the paper, the base field will be the field of complex numbers $\CC$. We work in the algebraic setting, so the term ``variety" means ``reduced scheme of finite type over $\CC$". We will use interchangeably the terms ``vector bundle" and ``locally free sheaf". The
projectivization $\PP(V)$ of a vector space $V$ will be understood as the variety of lines
in $V$, and the  projectivization $\PP(E)$ of a vector bundle $E$ over a variety $X$ as the relative proj-scheme Proj$_X\big(\mathcal{S}ym_{\scriptscriptstyle\bullet}(E^\dual)\big)$.\medskip

\noindent {\bf Acknowledgements.}
MJ is partially supported by the CNPq grant number 302477/2010-1 and the FAPESP grant number 2014/14743-8. MJ aknowledges the hospitality of the University of Lille 1. DM acknowledges the FAPESP grant number 2011/06464-3, which allowed him to visit the University of Campinas. DM was also partially supported by Labex CEMPI (ANR-11-LABX-0007-01). DM and AT acknowledge the hospitality of the Max-Planck-Institut für Mathematik in Bonn, where they made a part of work on the paper.

\section{Monads and instanton sheaves}

In this section we set up notation and revise important results from the vast literature on instanton sheaves on $\p3$. We also establish a couple of new propositions that will be relevant for the main results of the present paper. We work over the field of complex numbers; all sheaves of modules are coherent.

Recall that a \emph{monad} on a projective variety $X$ is a complex of locally free sheaves of the form
\begin{equation}\label{g-monad}
A \stackrel{\alpha}{\rightarrow} B \stackrel{\beta}{\rightarrow} C
\end{equation}
where $\alpha$ is injective and $\beta$ is surjective. The sheaf $\ker\beta/\im\alpha$ is called the \emph{cohomology} of (\ref{g-monad}). If $A=\ox(-1)^{\oplus a}$, $B=\ox^{\oplus b}$ and
$C=\ox(1)^{\oplus c}$, then (\ref{g-monad}) is a called \emph{linear monad}.

An \emph{instanton sheaf} on $\p3$ is a torsion free sheaf $F$ with trivial determinant and satisfying the following cohomological conditions
$$ H^0(F(-1))=H^1(F(-2))=H^{2}(F(-2))=H^3(F(-3))=0. $$
The integer $n:=-\chi(F(-1))$ is called the charge of $F$; it is easy to check that $n=h^1(F(-1))=c_2(F)$. The trivial sheaf $\op3^{\oplus r}$ of rank $r$ is considered as an instanton sheaf of charge zero.

Using the Beilinson spectral sequence, one can show that the instanton sheaves are precisely those that arise as cohomologies of linear monads of the form
$$
H^1(F\otimes\Omega^2_{\p3}(1))\otimes\op3(-1) \to H^1(F\otimes\Omega^1_\p3)\otimes\op3
\to H^1(F(-1))\otimes\op3(1),
$$
see for instance \cite[Proposition 14]{FJ2}. One checks that
$h^1(F\otimes\Omega^2_{\p3}(1))=h^1(F(-1))=n$ and $h^1(F\otimes\Omega^1_\p3)=2n+r$, where $r$ denotes the rank of $F$. The above monad can therefore be written in the following simpler way:
\begin{equation} \label{monad1}
M^\bullet ~:~ \op3(-1)^{\oplus n} \stackrel{\alpha}{\rightarrow} \op3^{\oplus 2n+r}
\stackrel{\beta}{\rightarrow} \op3(1)^{\oplus n}
\end{equation}

The following result will be relevant in what follows.

\begin{lemma}\label{exts}
If $F$ is an instanton sheaf on $\p3$, then
\begin{itemize}
\item[(i)] $\inext^p(F,F)=0$ for $p\ge2$;
\item[(ii)] $\inext^p(F,\op3)=0$ for $p\ge2$;
\item[(iii)] $\ext^3(F,F)=0$.
\end{itemize}
\end{lemma}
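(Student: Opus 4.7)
The plan is to exploit the linear monad $M^\bullet$ from \eqref{monad1}, which displays $F$ as the cohomology of a complex of locally free sheaves. I would split $M^\bullet$ into the two short exact sequences
$$
0\to\op3(-1)^{\oplus n}\to K\to F\to 0, \qquad 0\to K\to\op3^{\oplus 2n+r}\to\op3(1)^{\oplus n}\to 0,
$$
with $K:=\ker\beta$. Both sequences have locally free outer terms, which is essentially all the input needed: for any locally free sheaf $\calp$ and any coherent sheaf $\calq$ on $\mathbb P^3$, one has $\inext^p(\calp,\calq)=0$ for all $p\ge 1$.

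To prove (ii), I would apply $\inhom(-,\op3)$ to the second sequence; the locally free ends kill all $\inext^{\ge 1}$, so $\inext^p(K,\op3)=0$ for $p\ge 1$. Applying $\inhom(-,\op3)$ to the first sequence, the end terms are again locally free, so $\inext^p(F,\op3)\cong\inext^p(K,\op3)=0$ for $p\ge 2$. For (i), the identical argument with $\op3$ replaced by $F$ in the second slot works verbatim, since only the local-freeness of the first slot is used.

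For (iii) I would pass to global $\Ext$. Applying $\Hom(-,F)$ to the second sequence and using $\Ext^4=0$ on $\mathbb P^3$, the vanishing $\Ext^3(K,F)=0$ reduces to showing $H^3(F(-1))=H^3(F)=0$; both follow from twisting the two sequences by $\op3(t)$ for $t=-1,0$ and chasing cohomology, each step collapsing to standard Bott vanishing on $\mathbb P^3$. Then applying $\Hom(-,F)$ to the first sequence, the long exact sequence exhibits $\Ext^3(F,F)$ as a quotient of $\Ext^2(\op3(-1)^{\oplus n},F)=H^2(F(1))^{\oplus n}$, and an entirely analogous monad-plus-Bott computation gives $H^2(F(1))=0$, completing the argument.

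The only real work is bookkeeping a few specific values of $H^i(F(t))$, each of which collapses in one or two steps to Bott vanishing, so no serious obstacle is anticipated. An alternative route for (iii) would invoke the local-to-global spectral sequence $H^p(\inext^q(F,F))\Rightarrow\Ext^{p+q}(F,F)$ using (i) together with the fact that the singular locus of the torsion free sheaf $F$ on the smooth threefold $\mathbb P^3$ has codimension $\ge 2$ (which kills $H^{\ge 2}(\inext^1(F,F))$); however, that approach would still require the separate vanishing $H^3(\inhom(F,F))=0$, which looks less transparent than the direct monad argument above.
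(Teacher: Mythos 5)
Your proposal is correct and takes essentially the same route as the paper: the paper likewise splits the monad \eqref{monad1} into the two short exact sequences through $K=\ker\beta$, applies $\inhom(-,F)$ to $0\to\op3(-1)^{\oplus n}\to K\to F\to 0$, and kills the outer terms by local freeness to get (i), handling (ii) and (iii) ``in a similar way'' with a pointer to \cite[Corollary 7]{J-i}. Your explicit Bott-vanishing bookkeeping for (iii) simply fills in the details that the paper delegates to that citation.
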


\begin{proof}
Since $F$ can be realized as the cohomology of a monad of the form (\ref{monad1}), we have a short exact sequence
$$
0 \to \op3(-1)^{\oplus c} \stackrel{\alpha}{\rightarrow} K \to F \to 0,
$$
where $K:=\ker\beta$ is a locally free sheaf. Applying the functor $\inhom(-,F)$ we obtain
$$ \inext^{p-1}(\opn(-1)^{\oplus a},F) \to \inext^{p}(F,F) \to \inext^p(K,F) . $$
Since $\op3(-1)$ and $K$ are locally free sheaves, the first and the third sheaves in the previous sequence vanish for $p\ge2$, and the first item follows.

The second and the third items are proved in a similar way; compare with \cite[Corollary 7]{J-i}.
\end{proof}

Recall that the singular locus ${\rm Sing}(G)$ of a coherent sheaf $G$ on a nonsingular projective variety $X$ is given by
$$  {\rm Sing}(G) := \{ x\in X ~|~ G_x ~~\text{is not free over}~~ \mathcal{O}_{X,x} \} , $$
where $G_x$ denotes the stalk of $G$ at a point $x$ and $\mathcal{O}_{X,x}$ is its local ring.

It is not difficult to see that the singular locus of a non locally free instanton sheaf $F$ on $\p3$ is precisely the set
$$ {\rm Sing}(F) := \{ x\in \p3 ~|~ \alpha(x) ~~\text{is not injective} \} , $$
where $\alpha(x)$ denotes the map of fibers over the point $x\in \p3$.

Now we focus on rank $2$ instanton sheaves, which are the main object of study in this paper. The following result is proved in \cite[Main Theorem]{JG}.

\begin{theorem}\label{jg-thm}
If $F$ is a non locally free instanton sheaf of rank $2$ on $\p3$, then
\begin{itemize}
\item[(i)] its singular locus has pure dimension $1$;
\item[(ii)] $F^{\dual\dual}$ is a (possibly trivial) locally free instanton sheaf.
\end{itemize}
\end{theorem}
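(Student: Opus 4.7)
The plan is to exploit the canonical sequence
$$
0 \to F \to F^{\dual\dual} \to Q \to 0, \quad Q := F^{\dual\dual}/F,
$$
in conjunction with Lemma~\ref{exts}(ii). Since $F$ is torsion-free on the normal variety $\p3$, $Q$ is automatically supported in codimension $\geq 2$. Also, $F^{\dual\dual}$ is reflexive of rank $2$ on a smooth $3$-fold: by the $S_2$-property combined with Auslander--Buchsbaum, $F^{\dual\dual}$ has projective dimension $\leq 1$ at every point, so $\inext^p(F^{\dual\dual},\op3)=0$ for $p\geq 2$.

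For part (i) — which, once (ii) is in hand, reduces to purity of $Q$ — I would apply $\inhom(-,\op3)$ to the sequence above. Using Lemma~\ref{exts}(ii) together with the vanishings for $F^{\dual\dual}$ just noted, the piece
$$
\inext^2(F^{\dual\dual},\op3)\to\inext^2(F,\op3)\to\inext^3(Q,\op3)\to\inext^3(F^{\dual\dual},\op3)
$$
of the long exact sequence forces $\inext^3(Q,\op3)=0$. By Grothendieck local duality on $\p3$, this top local Ext detects the $0$-dimensional associated primes of $Q$; combined with the codimension bound, this forces $Q$ to be pure of dimension $1$ whenever nonzero.

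For part (ii), I would invoke Hartshorne's classical result that a rank-$2$ reflexive sheaf $G$ on a smooth $3$-fold with $c_1(G)=0$ is locally free if and only if $c_3(G)=0$, with $c_3(G)$ equal to twice the length of the $0$-dimensional non-free locus. Whitney's formula applied to the canonical sequence, together with $c_1(F)=c_3(F)=0$ and $c_1(Q)=0$ (since $Q$ is supported in codim $\geq 2$), yields $c_3(F^{\dual\dual})=c_3(Q)$. The target equality $c_3(Q)=0$ would then follow from a Riemann--Roch computation on the pure $1$-dimensional sheaf $Q$, exploiting the instanton cohomological vanishings for $F$ propagated to $Q$ via the long exact sequence in cohomology of $0\to F(k)\to F^{\dual\dual}(k)\to Q(k)\to 0$ for suitable twists $k$ — in essence, showing that $\chi(Q(-2))=0$, a ``theta-characteristic'' condition along the lines of \cite{Per4}. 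Once $F^{\dual\dual}$ is locally free, $\mathrm{Sing}(F)=\mathrm{supp}(Q)$ is pure of dimension $1$, completing (i); the residual vanishings $h^0(F^{\dual\dual}(-1))=h^1(F^{\dual\dual}(-2))=0$ needed for (ii) are then read off from the same cohomology sequences, using that $Q(-1)$ and $Q(-2)$ have the appropriate cohomology inherited from the instanton vanishings of $F$.

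The main obstacle is the Chern-class vanishing $c_3(Q)=0$: translating the local and global cohomological conditions defining an instanton sheaf into a quantitative constraint on the Euler characteristic of $Q$ requires careful Riemann--Roch bookkeeping on the $1$-dimensional support of $Q$, and it is here — rather than in the formal homological manipulations — that the specific instanton structure of $F$ plays the decisive role.
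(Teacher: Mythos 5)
A preliminary remark: the paper itself contains no proof of Theorem~\ref{jg-thm} — it is quoted from \cite[Main Theorem]{JG}, with only the comment that the ``reflexive $\Rightarrow$ locally free'' part can be deduced from \cite[Proposition 2.6]{H-refl} — so your proposal has to stand on its own. Your part (i) does: applying $\inhom(-,\op3)$ to $0\to F\to F^{\dual\dual}\to Q\to 0$, using Lemma~\ref{exts}(ii) for $F$ and $\inext^p(F^{\dual\dual},\op3)=0$ for $p\ge 2$ (reflexivity plus Auslander--Buchsbaum), does give $\inext^3(Q,\op3)=0$, and local duality then excludes $0$-dimensional associated points of $Q$; this part is correct, granted (ii). Your reduction of (ii) to Hartshorne's criterion via Whitney's formula is also the right frame, and matches the paper's own pointer to \cite{H-refl}.

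The genuine gap is in part (ii), at the very step you label the ``main obstacle'': the proposal contains no proof of $\chi(Q(-2))=0$, and the mechanism you describe cannot produce one. The instanton conditions give $h^i(F(-2))=0$ for \emph{every} $i$ (the cases $i=0,3$ follow from $h^0(F(-1))=0$ and from the surjection $H^3(F(-3))\twoheadrightarrow H^3(F(-2))$), so the cohomology sequence of $0\to F(-2)\to F^{\dual\dual}(-2)\to Q(-2)\to 0$ yields isomorphisms $H^i(Q(-2))\cong H^i(F^{\dual\dual}(-2))$ for all $i$: it merely transfers the question from $Q$ to $F^{\dual\dual}$, whose cohomology is constrained by nothing you have assumed. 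Riemann--Roch then gives $\chi(F^{\dual\dual}(-2))=\tfrac{1}{2}c_3(F^{\dual\dual})$, so your bookkeeping closes into the tautology $c_3(Q)=2\chi(Q(-2))=c_3(F^{\dual\dual})=c_3(Q)$, valid for any torsion-free sheaf with these Chern classes and saying nothing. No formal refinement can help, because the statement is false locally: over $R=\CC[[x,y,z]]$ the second syzygy module $M$ of the residue field, $0\to R\to R^{\oplus 3}\to M\to 0$, is reflexive of rank $2$, of projective dimension $1$, and has $\ext^1_R(M,R)\cong\CC\neq 0$; so a genuinely global input about $F^{\dual\dual}$ is indispensable. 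The missing ingredient is the vanishing $h^0(F^{\dual\dual}(-1))=0$, i.e.\ $\mu$-semistability of $F^{\dual\dual}$ (equivalently of $F$: intersect a destabilizer with $F$; and a nonzero section of $F^{\dual\dual}(-1)$ is a destabilizing $\op3(1)\hookrightarrow F^{\dual\dual}$). Granted it, $h^0(Q(-2))=h^0(F^{\dual\dual}(-2))=0$, hence $\chi(Q(-2))=-h^1(Q(-2))\le 0$, while \cite[Proposition 2.6]{H-refl} gives $c_3(F^{\dual\dual})\ge 0$; the two inequalities force $c_3(F^{\dual\dual})=0$ and finish your argument. But beware: you cannot get this semistability by citing Lemma~\ref{simple h0}, since in this paper the $\mu$-semistability of torsion-free rank-$2$ instanton sheaves is itself deduced from Theorem~\ref{jg-thm}(ii); inside this logical structure that is circular, and the semistability (or the $h^0$-vanishing for the reflexive hull) must be proved independently. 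That is exactly the nontrivial content your sketch leaves unaddressed.
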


In particular, every relfexive instanton sheaf of rank $2$ is actually locally free, as it can also be deduced from \cite[Proposition 2.6]{H-refl}. We remark that both claims are false for instanton sheaves of rank at least $3$, see \cite{JG} for precise examples.

\begin{lemma}\label{simple h0}
Every nontrivial instanton sheaf $F$ of rank $2$ on $\p3$ is simple and $H^0(F)=0$. If $F$ is locally free, then it is $\mu$-stable. If $F$ is torsion free, then it is $\mu$-semistable.
\end{lemma}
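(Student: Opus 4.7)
The plan is to establish the four assertions in sequence: $\mu$-semistability, vanishing $H^0(F)=0$, $\mu$-stability in the locally free case, and simplicity. I will rely repeatedly on Theorem~\ref{jg-thm}, which ensures that $F^{\smallvee\smallvee}$ is itself a locally free instanton sheaf.

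For $\mu$-semistability in the torsion free case, I first note that a rank one subsheaf of $F$ also sits inside $F^{\smallvee\smallvee}$, and destabilizes $F$ if and only if it destabilizes $F^{\smallvee\smallvee}$: the quotient $F^{\smallvee\smallvee}/F$ is supported in codimension $\ge 2$ and contributes nothing to $c_1$. But $F^{\smallvee\smallvee}$ is a locally free instanton with $H^0(F^{\smallvee\smallvee}(-1))=0$, which for a rank two bundle of trivial determinant is equivalent to $\mu$-semistability: a line subbundle $\calo(k)\hookrightarrow F^{\smallvee\smallvee}$ with $k\ge 1$ would give a nonzero section of $F^{\smallvee\smallvee}(-k)$, embedding into $H^0(F^{\smallvee\smallvee}(-1))=0$ under multiplication by a generic linear form.

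For $H^0(F)=0$, assume a nonzero section $s\in H^0(F)$ and derive a contradiction. It produces a subsheaf $\calo(D)\hookrightarrow F$ with $D\ge 0$ the divisorial part of the zero scheme of $s$, giving $0\to\calo(D)\to F\to I_Z(-D)\to 0$ for a codimension $\ge 2$ subscheme $Z$; the condition $H^0(F(-1))=0$ forces $D=0$. Twisting $0\to\calo\to F\to I_Z\to 0$ by $-2$, and using $H^i(\calo(-2))=0$ for every $i$, the vanishings $H^1(F(-2))=H^2(F(-2))=0$ transfer to $I_Z(-2)$, and the ideal sequence then yields $H^0(\calo_Z(-2))=H^1(\calo_Z(-2))=0$. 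Hence $\chi(\calo_Z)=2n$ where $n=c_2(F)=\deg Z_1$. Moreover $H^0(\calo_Z(-2))=0$ excludes any zero-dimensional part of $Z$ (isolated or embedded), since $\calo_p(-2)\cong\calo_p$ would contribute. So $Z$ is pure one-dimensional, and $h^0(\calo_Z)$ counts its connected components, at most $\deg Z_1=n$; combined with $h^0(\calo_Z)\ge\chi(\calo_Z)=2n$, this gives $2n\le n$, impossible for $n\ge 1$.

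The $\mu$-stability claim for locally free $F$ is now immediate: any line subbundle $\calo(k)\hookrightarrow F$ satisfies $k\le 0$ by $\mu$-semistability and $k\ne 0$ by $H^0(F)=0$, hence $k<0$. For simplicity, the locally free case follows from $\mu$-stability by the standard Schur argument. In the torsion free non-locally-free case, if $F^{\smallvee\smallvee}$ is a nontrivial instanton bundle it is $\mu$-stable, hence simple, and the double-dualization injection $\Hom(F,F)\hookrightarrow\Hom(F^{\smallvee\smallvee},F^{\smallvee\smallvee})=\CC$ forces $\Hom(F,F)=\CC$. The remaining case $F^{\smallvee\smallvee}\cong\calo_{\p3}^{\oplus 2}$, where $F\hookrightarrow\calo^{\oplus 2}$, is the main technical obstacle: a non-scalar endomorphism of $F$ extends to a non-scalar $\psi\in M_2(\CC)$ preserving $F$, and Jordan form reduces either to a splitting $F\cong I_{W_1}\oplus I_{W_2}$ (excluded by applying the $\chi$-inequality of the previous step to each summand) or to a nilpotent extension $0\to I_{W_1}\to F\to I_{W_2}\to 0$ with $W_1\subseteq W_2$, in which analogous $\chi$-bookkeeping on the graded pieces forces $W_2$ to be zero-dimensional, contradicting the inclusion $W_1\subseteq W_2$ since $W_1$ must then have positive-dimensional part of degree $n$.
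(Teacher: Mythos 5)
Your reductions are mostly sound (the semistability step, the exclusion of zero\-dimensional components of $Z$, and the computation $\chi(\calo_Z)=2n$ all check out), but the final counting argument for $H^0(F)=0$ has a genuine gap: the claim that ``$h^0(\calo_Z)$ counts its connected components'' is false for non-reduced curves, and nothing in your setup allows you to assume $Z$ is reduced. Concretely, let $Z$ be the double line on a line $\ell\subset\p3$ defined by a surjection $I_\ell/I_\ell^2\simeq\calo_\ell(-1)^{\oplus2}\twoheadrightarrow\calo_\ell(2)$ (a Ferrand doubling, which is locally complete intersection, hence pure one-dimensional). From $0\to\calo_\ell(2)\to\calo_Z\to\calo_\ell\to0$ one gets $h^0(\calo_Z)=4$, $h^1(\calo_Z)=0$, while $Z$ is connected of degree $2$; so $Z$ is pure with $\chi(\calo_Z)=2\deg Z$, exactly the numerical situation you claim is impossible. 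Thus purity plus the Euler-characteristic identity $\chi(\calo_Z)=2n$ can never yield a contradiction by themselves. Worse, in your situation $Z$ is \emph{forced} to be non-reduced: slicing $0\to\calo_Z(-2)\to\calo_Z(-1)\to\calo_W\to0$ by a generic plane and using $h^0(\calo_Z(-2))=h^1(\calo_Z(-2))=0$ gives $h^0(\calo_Z(-1))=n\neq0$, which is impossible for a reduced curve. So your argument breaks precisely in the only case that can actually occur.

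What is really needed at that point is the statement that no nonempty pure one-dimensional $Z\subset\p3$ has $h^0(\calo_Z(-2))=h^1(\calo_Z(-2))=0$, i.e.\ that $I_Z$ is never a rank-$1$ instanton sheaf of positive charge; your reduction is sharp, since if such a $Z$ existed then $\calo_{\p3}\oplus I_Z$ would be a rank-$2$ instanton sheaf with sections. This is a real theorem requiring input beyond bookkeeping (e.g.\ Serre duality on the locally Cohen--Macaulay curve $Z$ plus a positivity statement $h^0(\omega_Z(2))>0$ for space curves, or the monad description), and it is exactly what the paper sidesteps: its proof simply quotes \cite[Prop.~11]{J-i} for $H^0(F)=0$ and \cite[Lemma~23]{J-i} for simplicity, then deduces the two stability assertions in a couple of lines, much as you do. Note that your simplicity argument in the case $F^{\dual\dual}\simeq\calo_{\p3}^{\oplus2}$ reduces, in both Jordan cases, to this same rank-$1$ non-existence, so it inherits the gap. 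A smaller, fixable slip: for non-locally-free $F$ the saturation of a section is $I_W(D)\hookrightarrow F$ rather than $\calo_{\p3}(D)$, and $H^0(I_W(D-1))$ can vanish even when $D\ge1$; to force $D=0$ you should instead invoke the $\mu$-semistability you have already established, or run the divisorial-part argument inside $F^{\dual\dual}$, where $H^0(F^{\dual\dual}(-1))=0$ applies.
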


We will see below in Lemma \ref{mu-semistable} that there are non locally free rank $2$ instanton sheaves that are not $\mu$-stable, though certain non locally free rank $2$ instanton sheaves are $\mu$-stable, see Lemma \ref{mu-semistable} below.

\begin{proof}
The simplicity claim is \cite[Lemma 23]{J-i}; the vanishing of $h^0(F)$ is \cite[Proposition 11]{J-i}. If $F$ is locally free, the vanishing of $h^0(F)$ is sufficient to guarantee the $\mu$-stability. From the sequence
$$ 0 \to F \to F^{\dual\dual} \to F^{\dual\dual}/F \to 0 $$
we see that if $F$ is not $\mu$-semistable, then neither is $F^{\dual\dual}$, which is a contradiction.
\end{proof}

\section{Elementary transformations of instantons}\label{degen}

Let $\Sigma$ be a reduced locally complete intersection curve of arithmetic genus $g$. By the degree
of a line bundle $L$ on $\Sigma$ we understand the integer $\deg(L)=\chi(L)+g-1$. For $k\in\mathbb{Z}$ set $\mathrm{Pic}^k(\Sigma):=\{[L]\in\mathrm{Pic}(\Sigma)|\deg(L)=k\}$.

Let $E$ be an instanton sheaf (possibly trivial); \emph{elementary transformation data} $(\Sigma,L,\varphi)$ for $E$ consist of the following:

\begin{itemize}
\item[(i)] an embedding $\iota:\Sigma\hookrightarrow\p3$ of degree $d$;
\item[(ii)] a line bundle $L\in{\rm Pic}^{g-1}(\Sigma)$ such that
$h^0(\iota_*L)=h^1(\iota_*L)=0$;
\item[(iii)] a surjective morphism $\varphi:E\to(\iota_*L)(2)$.
\end{itemize}

\begin{proposition}\label{degdata}
Given an instanton sheaf E of rank $r$ and charge $n$,
and elementary transformation data $(\Sigma,L,\varphi)$ for
$E$ as above, the sheaf $F:=\ker\varphi$ is an
instanton sheaf of rank $r$ and charge $n+d$. Moreover,
$F^{\dual}\simeq E^{\dual}$ and, if $E$ is locally free, then
$F^{\dual\dual}/F\simeq(\iota_*L)(2)$.
\end{proposition}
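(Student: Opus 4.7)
The plan is to exploit the defining short exact sequence
\begin{equation*}
0 \to F \to E \xrightarrow{\varphi} (\iota_*L)(2) \to 0
\end{equation*}
in several different ways. First, since $(\iota_*L)(2)$ is pure of dimension $1$ while $E$ is torsion free of rank $r$, the kernel $F$ is automatically torsion free of rank $r$. For the charge, any coherent sheaf supported on a curve of degree $d$ in $\p3$ has $c_1 = 0$ and $c_2 = -d$ (computed either via Grothendieck--Riemann--Roch or directly from $\mathrm{ch}_2 = d$), so the Whitney formula applied to the displayed sequence yields $c_1(F) = 0$ and $c_2(F) = c_2(E) + d = n + d$, giving the desired charge.

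Second, to verify the four cohomological vanishings defining an instanton sheaf, I would twist the above sequence by $\op3(-1)$, $\op3(-2)$ and $\op3(-3)$ in turn and read off the required vanishings from the associated long exact sequences. The vanishing $h^0(F(-1))=0$ is immediate from $h^0(E(-1))=0$; the vanishings $h^1(F(-2))=h^2(F(-2))=0$ follow from the corresponding ones for $E$ together with the hypothesis $h^0(\iota_*L)=h^1(\iota_*L)=0$; and $h^3(F(-3))=0$ follows from $h^3(E(-3))=0$ and $H^2((\iota_*L)(-1))=0$, the latter holding because $\iota_*L$ is supported on a $1$-dimensional scheme.

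Third, for the duality statements, I would apply the functor $\inhom(-,\op3)$ to the defining sequence. Since $(\iota_*L)(2)$ is supported in codimension $2$ in the smooth variety $\p3$, standard depth considerations give $\inhom((\iota_*L)(2),\op3) = 0 = \inext^1((\iota_*L)(2),\op3)$, and the dualized sequence collapses to the isomorphism $E^\dual \xrightarrow{\sim} F^\dual$. When $E$ is locally free we have $E = E^{\dual\dual}$, and dualizing once more gives $F^{\dual\dual} \simeq E^{\dual\dual} = E$; tracing the canonical biduality map confirms that this isomorphism extends the inclusion $F\hookrightarrow E$, whence $F^{\dual\dual}/F \simeq E/F = (\iota_*L)(2)$.

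Each individual step is routine — a cohomology chase, a Chern-class computation, or an invocation of the vanishing of low-degree $\inext$'s for sheaves of codimension at least $2$ — so I do not foresee any serious obstacle. The only point requiring a small amount of care is the compatibility of the two duality isomorphisms with the canonical biduality maps, in order to conclude that $F^{\dual\dual}/F$ equals $E/F$ on the nose rather than merely up to abstract isomorphism.
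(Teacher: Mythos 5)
Your proposal is correct and follows essentially the same route as the paper's own proof: it uses the defining exact sequence for the Chern class computation, the twisted cohomology sequences for the four instanton vanishings, and dualization (via the codimension-two vanishing of $\inhom(-,\op3)$ and $\inext^1(-,\op3)$ on the torsion quotient) for $F^\dual\simeq E^\dual$, with the paper's Snake-Lemma diagram replaced by the equivalent appeal to naturality of the biduality maps. The only item of the paper's argument you omit is the Grothendieck--Riemann--Roch computation showing ${\rm ch}_3((\iota_*L)(2))=0$ --- this is precisely where the hypothesis $\deg L=g-1$ enters --- hence $c_3(F)=0$; this is not demanded by the paper's definition of instanton sheaf (and in fact follows a posteriori from the monad description), but it is what later places $[F]$ in $\calm(n+d)$, so it is worth recording.
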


The sheaf $F:=\ker\varphi$ is called an \emph{elementary transform} of $E$ along $\Sigma$. A similar construction was proposed by Maruyama and Trautmann in \cite[Definition 1.7]{MaTr2} for the case when $\Sigma$ is a union of finitely many disjoint lines.

\begin{proof}
First, applying Riemann--Roch for the immersion $\iota:\Sigma\hookrightarrow\p3$, we have
\begin{equation} \label{rri}
{\rm ch}((\iota_*L)(2)) = \iota_*\left({\rm ch}(L(\,2d\,\pt))\cdot{\rm td}(N_{\Sigma/\p3})^{-1}\right) ,
\end{equation}
where $L\in{\rm Pic}^{g-1}(\Sigma)$ and $N_{\Sigma/\p3}$ is the normal bundle, related to the tangent bundles $T_{\Sigma}$ and $T_{\p3}$ to $\Sigma$ and $\p3$ respectively by the exact triple
\begin{equation} \label{tan-sqc}
0 \to T_{\Sigma} \to \iota^*T_{\p3} \to N_{\Sigma/\p3} \to 0 .
\end{equation}

It follows from (\ref{tan-sqc}) that $c_1(N_{\Sigma/\p3})=4d+2g-2$. Plugging this back into (\ref{rri}), we get
$$
{\rm ch}_3((\iota_*L)(2))=c_1(L(\,2d\,\pt))-c_1(N)/2=
g-1+2d-\frac{1}{2}(4d+2g-2)=0.
$$

Now consider the short exact sequence
\begin{equation}\label{ses-deformation}
0 \to F \stackrel{\delta}{\longrightarrow} E \stackrel{\varphi}{\longrightarrow} (\iota_*L)(2) \to 0 .
\end{equation}
Since ${\rm ch}_1((\iota_*L)(2))=
{\rm ch}_3((\iota_*L)(2))=0$, we obtain $c_1(F)=c_3(F)=0$; since also
${\rm ch}_2((\iota_*L)(2))=[\Sigma]$, we conclude that $c_2(F)=c_2(E)+d=n+d$.

Next, twisting (\ref{ses-deformation}) by $\op3(-2)$ and passing to cohomology, we see that
$h^1(F(-2))=h^2(F(-2))=0$, since by hypothesis $h^p(\iota_*L)=0$ for $p=0,1$.

It is also easy to check that $h^0(E(-1))=h^3(E(-3))=0$ forces $h^0(F(-1))=h^3(F(-3))=0$, since $\Sigma$ is a curve. It then follows that $F$ is an instanton sheaf of rank $r$ and charge $n+d$.

Finally, note that $\inext^1((\iota_*L)(2),\op3)=0$ because the sheaf $(\iota_*L)(2)$ is supported in dimension $1$. Therefore, dualizing the sequence (\ref{ses-deformation}), we obtain the isomorphism $F^\dual\simeq E^\dual$.

For the last claim, consider the diagram
$$ \xymatrix{
& 0 \ar[d] & 0 \ar[d] & & \\
0 \ar[r] & F \ar[d] \ar[r] & E \ar[d]^\simeq \ar[r] & (\iota_*L)(2) \ar[r] & 0 \\
& F^{\dual\dual} \ar[d] \ar[r]^{\simeq} & E^{\dual\dual} & & \\
& F^{\dual\dual}/F \ar[d] & & & \\
& 0 & & &
} $$
This diagram and the Snake Lemma yield the desired isomorphism $F^{\dual\dual}/F\simeq(\iota_*L)(2)$.
\end{proof}

Note that two elementary transforms $F$ and $F'$ of the same instanton sheaf $E$ given by elementary
transformation data $(\Sigma,L,\varphi)$ and $(\Sigma',L',\varphi')$ respectively are isomorphic if and
only if there is an isomorphism $\psi: \iota_*L \to \iota'_*L'$ such that $\psi\circ\varphi=\varphi'$.

Consider now the case when $\Sigma=\Sigma_1\cup\Sigma_2$, where $\Sigma_1$ and $\Sigma_2$ are reduced, locally complete intersection curves such that $\Sigma_1\cup\Sigma_2$ is again a locally complete intersection curve. Take an instanton sheaf $E$ and elementary transformation data
$(\Sigma_1,L_1,\varphi_1)$ for $E$; let $F_1$ be the elementary transform of $E$ along these data.
Then take some elementary transformation data $(\Sigma_2,L_2,\varphi_2)$ for $F_1$,
and let $F_2$ be the corresponding elementary transform. We obtain the diagram
\begin{equation}\label{concatenation}\begin{gathered}
\xymatrix{
& 0 \ar[d] & & & \\
0 \ar[r] & F_2 \ar[d]^{j}\ar[r]^{i\circ j} & E \ar[d]^{\simeq} \ar[r] & Q \ar[r]\ar@{-->}[d] & 0 \\
0 \ar[r] & F_1 \ar[d]^{\varphi_2}\ar[r]^{i} & E \ar[r]^-{\varphi_1} & (\iota_{1*}L_1)(2) \ar[r] & 0 \\
& (\iota_{2*}L_2)(2) \ar[d] & & & \\
& 0 & & &
} \end{gathered}\end{equation}
Thus $F_2$ is an elementary transform of $E$ along a torsion sheaf $Q$ fitting into the short exact sequence 
$$ 0 \to (\iota_{2*}L_2)(2) \to Q \to (\iota_{1*}L_1)(2) \to 0 . $$
Note that if $\Sigma_1$ and $\Sigma_2$ do not intersect, then
$Q=(\iota_{1*}L_1)(2)\oplus (\iota_{2*}L_2)(2)$. We will say that $F_2$ is obtained from
$E$ by a concatenation of the elementary transformations with elementary transformation data
$(\Sigma_1,L_1,\varphi_1)$, $(\Sigma_2,L_2,\varphi_2)$.

\begin{remark}\label{def-transform} \rm
More generally, fix an instanton sheaf $E$ of rank $r$ and charge $n$, and let ${\rm Quot}^{d(k+2)}(E)$ denote the Quot scheme of quotients $\varphi:E\twoheadrightarrow Q$ with Hilbert polynomial $P_Q(k)=d(k+2)$. 

Take $(Q,\varphi)\in{\rm Quot}^{dk+2d}(E)$ satisfying $h^0(Q(-2))=h^1(Q(-2))=0$. Then one can show that the sheaf $F:=\ker\varphi$ is an instanton sheaf of rank $r$ and charge $n+d$. We say that $F$ is a \emph{transform of $E$ along $Q$}.

In this case, one has the short exact sequence
$$ 0 \to F \longrightarrow E \stackrel{\varphi}{\longrightarrow} Q \to 0. $$
Since $E$ is an instanton sheaf and $Q$ is supported in dimension $1$, we have $h^0(F(-1))=h^3(F(-3))=0$. Moreover, the vanishing of $h^0(Q(-2))$ and $h^1(Q(-2))=0$ implies that $h^1(F(-2))=h^2(F(-2))=0$, hence $F$ is a linear sheaf. The Hilbert polynomial of $F$ is given by
$$ P_F(k) = P_E(k) - (dk + 2d) = \frac{r}{6}k^3 + rk^2 + \left(\frac{11}{6}r - (n+d) \right)k + \left(r-2(n+d)\right) , $$
so that $c_1(F)=0$ and $c_2(F)=n+d$.
\end{remark}

\section{Stability of transforms}\label{stab-elm}

In this section we will prove several facts regarding the stability and $\mu$-stability of non locally free instanton sheaves, obtained as transforms of locally free instantons in the way described in Remark \ref{def-transform}.

\begin{lemma}\label{mu-semistable}
If $E$ is a $\mu$-(semi)stable instanton sheaf, then every transform of $E$ is also $\mu$-(semi)stable. Moreover, every transform of the trivial sheaf is $\mu$-semistable but not $\mu$-stable.
\end{lemma}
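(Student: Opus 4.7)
The plan is to treat the two assertions separately. For the first, the defining short exact sequence
\[ 0 \to F \to E \to Q \to 0 \]
from Remark~\ref{def-transform} with $Q$ supported in dimension $1$ gives $\mathrm{rk}(F)=\mathrm{rk}(E)$ and $c_1(F)=c_1(E)=0$, so $\mu(F)=\mu(E)=0$. Every coherent subsheaf $G \subset F$ with $0 < \mathrm{rk}(G) < \mathrm{rk}(F)$ is in particular a subsheaf of $E$ of the same rank, and the $\mu$-(semi)stability of $E$ immediately yields $\mu(G) \le 0 = \mu(F)$ (resp.\ $\mu(G) < 0$). Hence $F$ inherits the corresponding flavor of $\mu$-(semi)stability from $E$.

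For the second assertion, $\mu$-semistability of any transform $F = \ker(\varphi: \op3^{\oplus r} \twoheadrightarrow Q)$ of the trivial sheaf follows from the first part applied to $E = \op3^{\oplus r}$. To show that $F$ is \emph{not} $\mu$-stable it suffices to exhibit a proper rank $1$ subsheaf of $F$ of slope $0$. Fix any nonzero $v \in \C^r$, and let $s_v : \op3 \to Q$ denote the composition $\op3 \xrightarrow{\,v\,} \op3^{\oplus r} \xrightarrow{\,\varphi\,} Q$. Set $I := \ker(s_v) \subset \op3$.

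Since $\op3/I$ embeds into $Q$ and $Q$ is supported in dimension at most $1$, the support of $\op3/I$ has dimension $\le 1$, so $I = \cali_Z$ for some subscheme $Z \subset \p3$ of codimension $\ge 2$ (possibly empty, in which case $I = \op3$). Thus $I$ is a rank $1$ torsion free sheaf with $c_1(I) = 0$, hence $\mu(I) = 0$. By the very definition of $I$, the composition $I \hookrightarrow \op3 \xrightarrow{v} \op3^{\oplus r} \xrightarrow{\varphi} Q$ vanishes, so $I$ factors through $F = \ker\varphi$, realizing a rank $1$ subsheaf of $F$ of slope $0 = \mu(F)$. Since $\mathrm{rk}(F) = r \ge 2$, this destabilizes $F$.

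The only genuine geometric input beyond routine bookkeeping is that $\dim Q \le 1$, which forces the annihilator of any section of $Q$ to cut out a codimension $\ge 2$ subscheme and hence defines an ideal sheaf of slope $0$. I do not anticipate any serious obstacle.
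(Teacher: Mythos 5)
Your proof is correct. The first assertion is handled exactly as in the paper: any subsheaf of $F$ of intermediate rank is a subsheaf of $E$, and $\mu(F)=\mu(E)=0$, so a $\mu$-destabilizing subsheaf of $F$ would $\mu$-destabilize $E$. For the second assertion, however, you take a genuinely different route. The paper's proof is dual in flavor: it notes that $H^0(F^\dual)=H^0(\op3^{\oplus r})\neq 0$ (using $F^\dual\simeq E^\dual$, as in Proposition \ref{degdata}, obtained by dualizing the defining sequence since $Q$ is a torsion sheaf), and then invokes the standard fact that a $\mu$-stable sheaf of slope $0$ and rank $\ge 2$ admits no nonzero morphism to $\op3$. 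You instead produce an explicit rank-$1$ subsheaf of slope $0$: the kernel $\cali_Z$ of the composition $\op3\xrightarrow{v}\op3^{\oplus r}\xrightarrow{\varphi}Q$, which factors through $F=\ker\varphi$ and has $c_1=0$ because $\calo_Z$ embeds in the $1$-dimensional sheaf $Q$, forcing $\codim Z\ge 2$. Your argument is more self-contained — it needs neither the identification $F^\dual\simeq E^\dual$ nor the Hom-vanishing fact for $\mu$-stable sheaves of equal slope — and it is in the same spirit as the ideal-sheaf subsheaves $I_\Delta$ that the paper itself exploits in diagram \eqref{hook-diag} in the proof of Lemma \ref{stable1}; the paper's version is simply shorter given the machinery already in place. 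One shared caveat: both your argument and the paper's implicitly assume $r\ge 2$ (for $r=1$ the transform is an ideal sheaf, which is $\mu$-stable), but this is the only case of interest in the paper, so it is not a gap.
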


\begin{proof}
If $F$ is a transform of a $\mu$-(semi)stable instanton sheaf $E$ which is not $\mu$-(semi)stable, then it admits a subsheaf $G$ with $\mu(G)\ge0$ ($\mu(G)>0$). It follows that $G$ would also destabilize $E$.

It follows in particular that every transform $F$ of the trivial sheaf is $\mu$-semistable; to see that it is not $\mu$-stable, just note that $H^0(F^\dual)=H^0(\op3^{\oplus r})\ne0$.
\end{proof}

As is well known, every nontrivial locally free instanton sheaf of rank $2$ is $\mu$-stable. This implies:

\begin{corollary}\label{rk2-mu-stable}
Every transform of a nontrivial locally free instanton sheaf of rank $2$ is $\mu$-stable, and hence stable.
\end{corollary}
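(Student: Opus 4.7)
The plan is to chain together the two results already established in the excerpt. First, I would invoke Lemma \ref{simple h0}: by hypothesis we start with a nontrivial rank $2$ locally free instanton sheaf $E$, and that lemma tells us directly that $E$ is $\mu$-stable. Next, I would apply Lemma \ref{mu-semistable} in its $\mu$-stable version: since $E$ is $\mu$-stable, any transform $F$ of $E$ in the sense of Remark \ref{def-transform} is again $\mu$-stable.

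To finish, I would note that $F$ has the same rank as $E$, namely $2$, and trivial first Chern class (by Proposition \ref{degdata}, or by the Hilbert polynomial computation in Remark \ref{def-transform}). For any torsion-free sheaf, $\mu$-stability implies Gieseker stability, so $F$ is stable.

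There is essentially no obstacle here; the corollary is a direct formal consequence of Lemma \ref{simple h0} and Lemma \ref{mu-semistable}. The only point worth emphasizing in the write-up is the standard implication \emph{$\mu$-stable $\Rightarrow$ Gieseker stable}, which justifies the ``and hence stable'' part of the statement and does not require the rank $2$ hypothesis (it holds for arbitrary torsion-free sheaves on $\p3$).
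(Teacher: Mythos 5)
Your proposal is correct and matches the paper's own (implicit) argument exactly: the paper derives this corollary immediately from the fact that every nontrivial rank $2$ locally free instanton sheaf is $\mu$-stable (Lemma \ref{simple h0}) combined with Lemma \ref{mu-semistable}, followed by the standard implication that $\mu$-stability implies Gieseker stability. Nothing is missing.
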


Now we will focus on the case of rank $2$. We will see that certain transforms of the trivial rank 2 sheaf are also stable.

In what follows, we denote by $p_F(k)$ the reduced Hilbert polynomial of a sheaf $F$ on $\p3$.

\begin{lemma}\label{stable1}
Every elementary transform of the trivial sheaf of rank $2$ along an irreducible curve is stable.
\end{lemma}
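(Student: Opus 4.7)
The plan is to leverage Lemma \ref{mu-semistable} and reduce stability to the absence of a very restricted class of destabilizing subsheaves. Since $F$ has rank $2$ with $c_1(F)=0$ and is $\mu$-semistable, any Gieseker-destabilizing saturated subsheaf $G\subset F$ has rank $1$ and $c_1(G)\leq 0$. A strict inequality $c_1(G)<0$ already forces $p_G<p_F$ asymptotically, so I only need to treat the case $c_1(G)=0$. In that case $G^{\dual\dual}$ is a line bundle on $\PP^3$ of trivial first Chern class, hence $G^{\dual\dual}=\calo_{\PP^3}$, and $G\cong\cali_Z$ for some closed subscheme $Z\subset\PP^3$ of codimension at least two.

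My next step is to parametrize the embeddings $\cali_Z\hookrightarrow E=\calo_{\PP^3}^{\oplus 2}$. Using the depth vanishings $\inext^i(\calo_Z,\calo_{\PP^3})=0$ for $i<\codim Z$ on smooth $\PP^3$, the long exact sequence attached to $0\to\cali_Z\to\calo_{\PP^3}\to\calo_Z\to 0$ yields $\Hom(\cali_Z,\calo_{\PP^3})=\CC$. Any embedding $\cali_Z\hookrightarrow\calo_{\PP^3}^{\oplus 2}$ is therefore of the form $a\mapsto(s_1 a,s_2 a)$ for some nonzero $(s_1,s_2)\in\CC^2$. Writing $\varphi=(\varphi_1,\varphi_2)$ with $\varphi_i\in H^0((\iota_*L)(2))$, I see that this embedding lands inside $F=\ker\varphi$ exactly when $\cali_Z$ is annihilated by multiplication by $s:=s_1\varphi_1+s_2\varphi_2$.

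Then I split into two cases. If $s\neq 0$, its restriction $s|_\Sigma$ is a nonzero section of the line bundle $L(2)$ on the integral curve $\Sigma$, and because all stalks of $\calo_\Sigma$ are integral domains, multiplication by $s$ defines an injection $\calo_\Sigma\hookrightarrow L(2)$. Hence $\ker(\calo_{\PP^3}\to(\iota_*L)(2))=\cali_\Sigma$, so $\cali_Z\subseteq\cali_\Sigma$, and a Hilbert polynomial calculation gives
$$p_{\cali_\Sigma}(k)-p_F(k)=-\tfrac{d}{2}k+O(1),$$
which is negative for $k\gg 0$ since $d\geq 1$; thus no such $\cali_Z$ can destabilize. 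If $s=0$, then $\varphi_1,\varphi_2$ are linearly dependent, and the surjectivity of $\varphi$ gives a surjection $\calo_\Sigma\twoheadrightarrow L(2)$ of torsion-free rank $1$ sheaves on the integral curve $\Sigma$, which is necessarily an isomorphism. Hence $L\cong\calo_\Sigma(-2)$, of degree $-2d$.

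The main obstacle is then this final case $s=0$, for which I need a degree comparison to reach a contradiction. The elementary transformation data impose $\deg L=g-1$, while Case $s=0$ produced $\deg L=-2d$. For $d\geq 1$ and $g\geq 0$ (an integral projective curve has nonnegative arithmetic genus), one has $g-1\geq -1>-2\geq -2d$, so the two values disagree, ruling out this case and finishing the proof. A subtler issue encountered in the case $s\neq 0$ is verifying that the kernel of multiplication by $s$ is exactly $\cali_\Sigma$ and not larger: this is where the irreducibility of $\Sigma$ is essential, as the stalks of $\calo_\Sigma$ would otherwise not be domains, and the injectivity of multiplication by a nonzero stalk element of a line bundle would fail.
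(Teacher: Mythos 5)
Your proof is correct, and while it shares its endpoints with the paper's argument --- the reduction via $\mu$-semistability (Lemma \ref{mu-semistable}) to saturated rank-one subsheaves $G=I_Z$ with $c_1(G)=0$, and the concluding Hilbert-polynomial estimate against $I_\Sigma$ --- the middle runs along a genuinely different track. The paper completes the inclusion $I_\Delta\hookrightarrow F\hookrightarrow\op3^{\oplus2}$ to the diagram \eqref{hook-diag}, applies the Snake Lemma, and uses the torsion-freeness of $T=F/G$ to embed ${\mathcal O}_\Delta$ into $Q$; irreducibility of $\Sigma$ then leaves only $\Delta=\varnothing$, excluded by $H^0(F)=0$ (Lemma \ref{simple h0}), and $\Delta=\Sigma$, excluded numerically. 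You instead compute $\Hom(I_Z,\op3)=\CC$, write the embedding as a pair of scalars $(s_1,s_2)$, and split on the section $s=s_1\varphi_1+s_2\varphi_2\in H^0(Q)$: when $s\neq0$, integrality of $\Sigma$ gives $\ker\big(s\colon\op3\to Q\big)=I_\Sigma$, hence $I_Z\subseteq I_\Sigma$ and the same numerical estimate applies; when $s=0$, surjectivity of $\varphi$ forces $L(2)\simeq\mathcal{O}_\Sigma$, contradicting $\deg L=g-1$ (equivalently $\chi(\iota_*L)=0$). The trade-off is this: the paper's diagram argument never inspects the individual components of $\varphi$, which is why it transfers almost verbatim to the reducible supports of Lemma \ref{stable2} and Remark \ref{permute-ell-gamma}, where the lattice of subsheaves of $Q$ is richer; your argument avoids the quotient $T$ and the vanishing $H^0(F)=0$ altogether, needs only the containment $I_Z\subseteq I_\Sigma$ rather than the exact identification $\Delta=\Sigma$, and in the degenerate case proves the slightly stronger fact that no surjection $\op3^{\oplus2}\twoheadrightarrow(\iota_*L)(2)$ with proportional components can exist once $\chi(L)=0$. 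Two steps worth making explicit in a final write-up: that $I_Z\subseteq I_\Sigma$ implies $p_{I_Z}\le p_{I_\Sigma}$ for $k\gg0$ (the Hilbert polynomial of the coherent quotient $I_\Sigma/I_Z$ is eventually nonnegative), and that a nonzero global section of a line bundle on an integral curve has nonzero germ at every point, which is what makes multiplication by $s|_\Sigma$ injective stalk by stalk.
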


\begin{proof}
Let $F$ be defined by the short exact sequence
$$ 0 \to F \to \op3^{\oplus 2} \xrightarrow{\varphi} Q \to 0  $$
for some elementary transformation data $(\Sigma,L,\varphi)$ with irreducible $\Sigma$, where $Q=(\iota_*L)(2)$ and $\iota:\Sigma\hookrightarrow \p3$ is the
natural embedding.
It is enough to consider torsion free sheaves $G\subset F$ of rank $1$ whose quotient $T:= F/G$ is also torsion free. Moreover, since $F$ is $\mu$-semistable, we can take $\deg(G)=0$, thus $G$ is the ideal sheaf $I_\Delta$ of a subscheme $\Delta\subset\p3$ of dimension at most $1$.

We thus obtain the diagram
\begin{equation}\label{hook-diag}\begin{gathered}
 \xymatrix{
& 0 \ar[d] & 0 \ar[d] & & \\
0 \ar[r] & I_\Delta \ar[d] \ar[r] & \op3 \ar[d]\ar[r] & \imath_{\Delta *}{\mathcal O}_\Delta \ar[r]\ar@{-->}[d]^{\zeta} & 0 \\
0 \ar[r] & F \ar[d] \ar[r] & \op3^{\oplus 2} \ar[d]\ar[r] & Q \ar[r] & 0 \\
& T \ar[d] & \op3 \ar[d] & & \\
& 0 & 0 & &
} \end{gathered}\ \ \ ,\end{equation}
where $\imath_{\Delta}:\Delta\hookrightarrow\p3$ is the natural embedding.
It provides a map $\zeta:{\mathcal O}_\Delta\to Q$ and an exact sequence
$$ 0 \to \ker\zeta \to T \to \op3 \to \coker\zeta \to 0 .$$
Since $T$ is torsion free, it follows that $\ker\zeta=0$ and ${\mathcal O}_\Delta$ is a subsheaf of $Q$.
Since $\Sigma$ is irreducible, we must then have that either $\Delta=\Sigma$, and hence $G=I_\Sigma$, or $\Delta=\varnothing$, and hence $G=\op3$.

Since $H^0(F)=0$ by Lemma \ref{simple h0}, the second case does not occur. In the first case we have,
$$ p_F(k) - p_{I_\Sigma}(k) = \frac{d}{2} k > 0 , $$
where $d:=\deg(\Sigma)=c_2(F)$. Hence $F$ is stable.
\end{proof}

\begin{remark} \rm
Note that we do not use the hypothesis $h^0(Q(-2))=h^1(Q(-2))=0$ in the proof above. In other words, if $Q$ is sheaf with Hilbert polynomial $P_Q(k)=dk+2d$ and irreducible support, then the kernel of a surjective morphism $\op3^{\oplus2}\twoheadrightarrow Q$ is a stable rank $2$ torsion free sheaf $F$ with $c_1(F)=c_3(F)=0$ and $c_2(F)=d$.
\end{remark}

As a by-product of the previous proof we also obtain the following interesting fact.

\begin{corollary}
Every elementary transform of $\op3^{\oplus2}$ along an irreducible curve $\Sigma$ of genus $g$ and degree $d$ is an extension of an ideal sheaf $I_Z$ of a $0$-dimensional subscheme $Z\subset\p3$ of length $2d+g-1$ by the ideal sheaf
$I_\Sigma$.
\end{corollary}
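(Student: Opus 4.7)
Here is my plan. I would start from the defining short exact sequence
$0 \to F \to \op3^{\oplus 2} \xrightarrow{\varphi} Q \to 0$,
where $Q=(\iota_*L)(2)$ and $L \in \mathrm{Pic}^{g-1}(\Sigma)$, and aim to exhibit an inclusion $I_\Sigma \hookrightarrow F$ with quotient an ideal sheaf of a $0$-dimensional subscheme of the correct length. The guiding observation, already implicit in the proof of Lemma~\ref{stable1}, is that $I_\Sigma \subset \op3$ lies in the kernel of any morphism $\op3 \to \iota_*M$ with $M$ an $\osigma$-module, so \emph{every} direct summand inclusion $\op3 \hookrightarrow \op3^{\oplus 2}$ automatically sends $I_\Sigma$ into $F = \ker\varphi$.

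To realize this explicitly, I would choose a decomposition $\op3^{\oplus 2} = \op3 \oplus \op3$ such that the restriction $\varphi_1 := \varphi|_{\op3 \oplus 0}$ is nonzero; this is possible because $\varphi\neq 0$, so at least one of its two components is nonzero, and a linear change of basis in $\op3^{\oplus 2}$ puts such a component into the first summand. The resulting factorization $\op3 \twoheadrightarrow \iota_*\osigma \xrightarrow{\zeta} Q$ of $\varphi_1$ produces a map $\zeta$ corresponding, via $\Hom(\iota_*\osigma, \iota_*L(2)) = H^0(L(2))$, to a nonzero section $s_1 \in H^0(L(2))$. Since $\Sigma$ is irreducible and $L(2)$ is a line bundle of degree $(g-1)+2d = 2d+g-1$, the morphism $\zeta$ is injective with $0$-dimensional cokernel of length $2d+g-1$.

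Finally, I would invoke the nine-lemma for the commutative diagram with exact rows
\[
\xymatrix{
0 \ar[r] & I_\Sigma \ar[r] \ar[d] & \op3 \ar[r] \ar[d] & \iota_*\osigma \ar[r] \ar[d]^{\zeta} & 0 \\
0 \ar[r] & F \ar[r] & \op3^{\oplus 2} \ar[r]^-{\varphi} & Q \ar[r] & 0,
}
\]
whose vertical maps are respectively the induced inclusion $I_\Sigma \hookrightarrow F$, the chosen summand inclusion (with cokernel $\op3$), and $\zeta$, all injective. The induced exact sequence of cokernels
$0 \to F/I_\Sigma \to \op3 \to \coker\zeta \to 0$
identifies $F/I_\Sigma$ with the ideal sheaf $I_Z$ of the $0$-dimensional subscheme $Z \subset \p3$ cut out by $\op3 \twoheadrightarrow \coker\zeta$, and the length of $Z$ equals $\mathrm{length}(\coker\zeta) = 2d+g-1$, as required.

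The only step that requires genuine care is arranging $\zeta \neq 0$; the rest is a direct diagram chase. I do not expect any serious obstacle, since the nontriviality of $\zeta$ follows immediately from the surjectivity (in particular nonvanishing) of $\varphi$, and the irreducibility of $\Sigma$ then upgrades $\zeta \neq 0$ to injectivity with the correct colength.
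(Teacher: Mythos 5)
Your proposal is correct and takes essentially the same route as the paper: the statement appears there as a by-product of the proof of Lemma~\ref{stable1}, whose diagram \eqref{hook-diag} with $\Delta=\Sigma$ is precisely your $3\times3$ diagram, giving the cokernel sequence $0 \to F/I_\Sigma \to \op3 \to \coker\zeta \to 0$ and hence $F/I_\Sigma\simeq I_Z$ with $\mathrm{length}(Z)=\deg L(2)=2d+g-1$. The only difference is presentational: you run the construction forwards by choosing a summand $\op3\subset\op3^{\oplus2}$ on which $\varphi$ is nonzero, whereas the paper arrives at the same diagram starting from a saturated rank-$1$ subsheaf of $F$.
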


We now consider another situation, given by a concatenation of two elementary transformations along irreducible rational curves, that will be relevant later on.

Let $\imath:\ell\hookrightarrow\p3$ be a line and let $\jmath:\Gamma\hookrightarrow\p3$ be a rational curve of degree $m-1$ ($m\ge 2$); assume that either $\ell\cap \Gamma=\varnothing$, or $\ell$, $\Gamma$ intersect quasi-transversely at a single point $P$; we say that the intersection is quasi-transverse when the tangents to $\Gamma,\ell$ at the intersection point are distinct. Consider the sheaf $Q$ given by an extension
\begin{equation}\label{defn-Q}
0 \to (\imath_*\ol(-\pt))(2) \longrightarrow Q
\stackrel{\tau}{\longrightarrow} (\jmath_*\mathcal{O}_\Gamma(-\pt))(2) \to 0 .
\end{equation}
Such an extension may be nontrivial only if $\Gamma,\ell$ intersect. So, either $$Q\simeq
(\imath_*\ol(\pt))\oplus (\jmath_*\mathcal{O}_\Gamma((2m-1)\pt)),$$ or the curve $\Gamma\cup\ell$ is connected
and $Q$ is a line bundle on $\Gamma\cup\ell$ such that $$Q|_{\ell}\simeq\calo_{\ell}(2\pt),\ \ 
Q|_{\Gamma}\simeq\calo_{\ell}((2m-1)\pt).$$

Let $E$ be an instanton sheaf of rank $r$ and charge $c$, and assume there exists a surjective map $\varphi:E\to Q$. Let $F:=\ker\varphi$. We have the following diagram:
\begin{equation}\label{concat2}\begin{gathered}
\xymatrix{
& & & 0 \ar[d] & \\
& & & (\imath_*\ol(-\pt))(2) \ar[d] & \\
0 \ar[r] & F \ar[r] & E \ar[d]^{\simeq} \ar[r]^{\varphi} & Q \ar[r]\ar[d]^{\tau} & 0 \\
0 \ar[r] & F' \ar[r] & E \ar[r]^-{\tau\circ\varphi} & (\jmath_*\mathcal{O}_\Gamma(-\pt))(2) \ar[r]\ar[d] & 0 \\
& & & 0 &
} \end{gathered}\ \ \ \ ,\end{equation}
where $F':=\ker\tau\circ\varphi$. We then obtain the short exact sequence
\begin{equation} \label{ef sqc}
0 \to F \to F' \to (\imath_*\ol(-\pt))(2) \to 0 .
\end{equation}
Comparing with diagram (\ref{concatenation}), we see that $F$ is obtained by concatenation of two elementary transformations, first along $\Gamma$ then along $\ell$.

\begin{lemma}\label{stable2}
Every transform of the trivial sheaf of rank $2$ along a sheaf $Q$ given by an extension (\ref{defn-Q}) is stable.
\end{lemma}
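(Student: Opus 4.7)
The plan is to follow the structure of the proof of Lemma~\ref{stable1}, with an additional step needed to handle the reducibility of the support $\Sigma = \ell \cup \Gamma$. By Lemma~\ref{mu-semistable}, $F$ is $\mu$-semistable, so it suffices to rule out rank~$1$ saturated subsheaves $G \subset F$ with $p_G \geq p_F$. Since $c_1(G) < 0$ already gives $p_G < p_F$ by comparing leading terms, I may assume $c_1(G) = 0$, whence $G = I_\Delta$ for a subscheme $\Delta \subset \p3$ of codimension~$\geq 2$. Running the same diagram~(\ref{hook-diag}) as in the proof of Lemma~\ref{stable1}, with the middle column $\op3 \hookrightarrow \op3^{\oplus 2} \twoheadrightarrow \op3$ determined by a nonzero $(a,b) \in \C^2$ containing the image of $G$, yields an injection $\zeta: \mathcal{O}_\Delta \hookrightarrow Q$ and ensures $T = F/G$ is torsion-free. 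Since $Q$ is pure of dimension~$1$ (being an extension of two such sheaves), $\mathcal{O}_\Delta$ is either zero or pure of dimension~$1$; the zero case gives $\op3 \hookrightarrow F$, contradicting $H^0(F) = 0$ from Lemma~\ref{simple h0}. Because $I_\Sigma$ annihilates $Q$, we have $\Delta \subset \Sigma$, so $\Delta$ is a reduced CM subcurve of $\Sigma$, i.e., $\Delta \in \{\ell,\, \Gamma,\, \Sigma\}$.

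The main obstacle, absent from Lemma~\ref{stable1} because $\Sigma$ there was irreducible, is to rule out $\Delta = \ell$ and $\Delta = \Gamma$: a direct Hilbert polynomial check shows that each of these would actually destabilize $F$ for $m \geq 2$. The key use of the surjectivity of $\varphi$ is as follows. Writing the images of the standard basis of $\op3^{\oplus 2}$ as $\varphi_1, \varphi_2 \in H^0(Q)$, the map $\op3 \xrightarrow{\psi} Q$ induced by the middle column is $\psi = a\varphi_1 + b\varphi_2$. If $\Delta = \ell$, then $\psi$ factors through $\mathcal{O}_\ell$, so $\psi$ is annihilated by $I_\ell$; since $\Gamma \not\subset \ell$ and $Q|_\Gamma$ is torsion-free on $\Gamma$, this forces $\psi|_\Gamma = 0$. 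On the other hand, $\varphi$ being surjective means $\varphi_1, \varphi_2$ have no common zero on $\Sigma$, and $Q|_\Gamma$ is a line bundle of degree $\geq 1$ on $\Gamma$, so any linear dependence of $\varphi_1|_\Gamma, \varphi_2|_\Gamma$ would give them a common zero on $\Gamma$. Hence $\varphi_1|_\Gamma, \varphi_2|_\Gamma$ are linearly independent in $H^0(\Gamma, Q|_\Gamma)$, and $\psi|_\Gamma = 0$ forces $(a,b) = 0$, a contradiction. The case $\Delta = \Gamma$ is excluded symmetrically.

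Therefore $\Delta = \Sigma$, i.e., $G = I_\Sigma$. A direct computation using $c_2(F) = m = \deg \Sigma$ and $P_\Sigma(k) = mk + \chi(\mathcal{O}_\Sigma)$ yields
$$ p_F(k) - p_{I_\Sigma}(k) = \frac{m}{2}\, k + \bigl(\chi(\mathcal{O}_\Sigma) - m\bigr), $$
whose leading coefficient $m/2$ is positive since $m \geq 2$, so $p_F > p_{I_\Sigma}$ for $k \gg 0$ and $F$ is stable.
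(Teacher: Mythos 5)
Your proof is correct in substance, and its key step takes a genuinely different route from the paper's. Both arguments share the same skeleton: reduce to saturated subsheaves $G=I_\Delta$ with $\Delta\in\{\varnothing,\ell,\Gamma,\ell\cup\Gamma\}$ via diagram (\ref{hook-diag}), kill $\Delta=\varnothing$ with $H^0(F)=0$, and kill $\Delta=\ell\cup\Gamma$ by a Hilbert polynomial comparison. The difference is in the cases $\Delta=\ell,\Gamma$. The paper disposes of $\Delta=\ell$ for $m\ge3$ by noting that $I_\ell\subset F\subset F'$ would destabilize the intermediate transform $F'$, which is stable by Lemma \ref{stable1}, and handles the remaining cases ($\Delta=\ell$ with $m=2$, and $\Delta=\Gamma$ with $m=2$) by computing $\coker\zeta$ in diagram (\ref{3x3-diag}) case by case and observing that it cannot be generated by a single global section, as the bottom row of the diagram would require. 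You instead extract the contradiction from the middle column: the composite $\psi=a\varphi_1+b\varphi_2$ would be annihilated by $I_\ell$, hence vanish in the $\Gamma$-part of $Q$, whereas surjectivity of $\varphi$ onto a line bundle of positive degree on $\Gamma$ forces the $\Gamma$-parts of $\varphi_1,\varphi_2$ to be linearly independent. This is more uniform: it rules out $I_\ell$ and $I_\Gamma$ as subsheaves for every $m\ge2$ at once, needs neither the auxiliary sheaf $F'$ nor the case-by-case computation of $\coker\zeta$, and avoids splitting into the split/non-split and intersecting/disjoint configurations.

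Two points need repair, neither fatal. First, your side remark that a Hilbert polynomial check shows $I_\Gamma$ ``would destabilize $F$ for $m\ge2$'' is false for $m\ge3$: as the paper notes, $p_F(k)-p_{I_\Gamma}(k)=\frac{m-2}{2}k+1-m>0$ for $k\gg0$, so $I_\Gamma$ is only a threat when $m=2$; this is harmless since you rule out its existence anyway. Second, the assertion that ``$Q|_\Gamma$ is torsion-free on $\Gamma$'' fails if $Q|_\Gamma$ means $Q\otimes\mathcal{O}_\Gamma$ in the case where the extension (\ref{defn-Q}) splits but $\ell\cap\Gamma\ne\varnothing$: there $Q\otimes\mathcal{O}_\Gamma$ acquires a skyscraper summand at the intersection point. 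The correct object is the quotient map $\tau:Q\to(\jmath_*\mathcal{O}_\Gamma(-\pt))(2)$ that comes with (\ref{defn-Q}); then $\tau(\psi)$ is an $I_\ell$-torsion section of a torsion-free $\mathcal{O}_\Gamma$-module, hence zero, and your argument proceeds verbatim. Likewise, the case $\Delta=\Gamma$ is not literally ``symmetric,'' since the $\ell$-part of $Q$ is a subsheaf rather than a quotient; one should instead use $Q\otimes\mathcal{O}_\ell$ modulo its torsion (a line bundle of degree $1$ or $2$ on $\ell$), or the summand projection in the split case, after which the same linear-independence argument applies.
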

\begin{proof}
Let $F$ be defined by the exact triple
$$ 0 \to F \to \op3^{\oplus 2} \to Q \to 0 . $$
As observed above, $F$ also fits into the short exact sequence (\ref{ef sqc}), where $F'$ is an elementary transform of the trivial sheaf of rank $2$ along $\Gamma$.

Proceeding as in the beginning of the proof of Lemma \ref{stable1}, we conclude that any rank $1$ torsion free subsheaf $G\subset F$ with torsion free quotient $T:=F/G$ and zero degree is the ideal sheaf $I_\Delta$ of a closed subscheme $\Delta\subset\p3$ of dimension at most $1$, and that $\mathcal O_\Delta$ is a subsheaf of $Q$.
There are only four possibilities for $\Delta$: $\varnothing$, $\ell$, $\Gamma$ and $\Gamma\cup\ell$; then, respectively, $G=\op3$, $G=I_\ell$, $G=I_\Gamma$, and $G=I_{\Gamma\cup\ell}$.

The first possibility does not occur, since $H^0(F)=0$. Let us examine the other three possibilities.

In the case  $G=I_\ell$, we have $G\subset F\subset F'$ and
$$ p_{F'}(k) - p_{I_\ell}(k) = \frac{(3-m)}{2}k + 2 - m . $$
If $m\ge3$, we get $ p_{F'}(k)-p_{I_\ell}(k)<0$, so $G$ destabilizes $F'$, which is impossible by Lemma \ref{stable1}. Assume now that $G=I_\ell$ and $m=2$. Then $\ell$, $\Gamma$ are two lines spanning a plane, and $I_\ell$ does not destabilize $F'$, but $ p_{F}(k)-p_{I_\ell}(k)=-1$ and so $I_\ell$ destabilizes $F$. Let us see that this case is impossible.

Consider the diagram \eqref{hook-diag} for our sheaf $F$ with $\Delta=\ell$. Arguing as in the proof of Lemma \ref{stable1}, we can complete it to the diagram
\begin{equation}\label{3x3-diag}\begin{gathered}
 \xymatrix{
& 0 \ar[d] & 0 \ar[d] & 0 \ar[d] & \\
0 \ar[r] & I_\ell \ar[d] \ar[r] & \op3 \ar[d]\ar[r] & \iota_*{\mathcal O}_\ell \ar[r]\ar[d]^{\zeta} & 0 \\
0 \ar[r] & F \ar[d] \ar[r] & \op3^{\oplus 2} \ar[d]\ar[r] & Q  \ar[d]\ar[r] & 0 \\
0 \ar[r] & T \ar[d]\ar[r] & \op3\ar[r] \ar[d] & \coker\zeta\ar[r] \ar[d] & 0\\
& 0 & 0 & 0 &
} \end{gathered}\end{equation}

If, for instance, $\Gamma$, $\ell$ intersect and the extension \eqref{defn-Q} is non-trivial, then $Q|_{\ell}\simeq(\iota_*{\mathcal O}_\ell)(2)$
and $Q|_{\Gamma}\simeq(\jmath_*{\mathcal O}_\Gamma)(1)$. As $\zeta$ factors
through $Q|_{\ell}(-\pt)\hookrightarrow Q$,
$$
\coker\zeta\simeq (\jmath_*{\mathcal O}_\Gamma)(1)\oplus \C_{P'},
$$
where $\C_{P'}$ denotes a sky-scraper sheaf of length 1 supported at a point $P'\in\ell$.
It is obvious that $\coker\zeta$ cannot be generated by a single section, so that the
surjection $\op3\to \coker\zeta$ in the last line of \eqref{3x3-diag} does not exist.
By a completely similar argument, one treats the remaining cases: (a)~$\Gamma$, $\ell$ intersect, 
but the extension \eqref{defn-Q} is trivial, and (b)~$\Gamma$, $\ell$ do not intersect.
The conclusion is that $G=I_\ell$ for $m=2$ cannot occur as a destabilizing subsheaf.

Next, the ideal sheaf $I_\Gamma$ does not destabilize $F$ when $m\ge3$, for
$$ p_F(k) - p_{I_\Gamma}(k) = \frac{m-2}{2}k + 1 - m > 0 , $$
and it does not destabilize $F$ for $m=2$ by an analysis of a commutative diagram,
similar to \eqref{3x3-diag} (see also Remark \ref{permute-ell-gamma} below).

Finally, we check that the ideal sheaf $I_{\Gamma\cup\ell}$ also does not destabilize $F$ when $m\ge2$; indeed:
$$ p_E(k) - p_{I_{\Gamma\cup\ell}}(k) = \frac{m}{2}k + (2 - m) > 0 . $$
\end{proof}

\begin{remark}\label{permute-ell-gamma}
{\em
Observe that we can permute the roles of $\ell,\Gamma$ in the construction
of the concatenation. In the
same notation as in the paragraph preceding equation (\ref{defn-Q}),
let $Q$ be given by a nontrivial extension
$$
0 \to (\jmath_*\mathcal{O}_\Gamma(-\pt))(2) \longrightarrow Q
\stackrel{\tau}{\longrightarrow} (\imath_*\ol(-\pt))(2) \to 0 ,
$$
and let $F$ be the rank 2 instanton sheaf obtained by an elementary
transformation of the trivial sheaf of rank $2$ along $Q$:
$$ 0 \to F \to \op3^{\oplus 2} \to Q \to 0 . $$
We then have the following two exact triples:
$$ 0 \to F \to F' \to (\jmath_*\mathcal{O}_\Gamma(-\pt))(2) \to 0 ~~{\rm
and}~~
0 \to F' \to \op3^{\oplus 2} \to (\imath_*\ol(-\pt))(2) \to 0. $$
Note that $F'$ is stable by Lemma \ref{stable1}, and the stability of
$F$ is proved by an argument similar to that of Lemma \ref{stable2}.

For example, the ideal sheaf $I_{\ell}$ would destabilize $F$ for some $m\ge 2$, 
if one might find an embedding $I_\ell\hookrightarrow F$ with torsion free
quotient $T=F/I_\ell$. Let us assume that such an embedding exists.
Then we can complete it to a diagram of the form \eqref{3x3-diag}. We observe that
$Q|_{\ell}\simeq(\iota_*{\mathcal O}_\ell)(1)$
and $Q|_{\Gamma}\simeq\jmath_*{\mathcal O}_\Gamma((2m-2)\pt)$, hence
$\coker\zeta\simeq\jmath_*{\mathcal O}_\Gamma((2m-2)\pt)$ cannot be generated by a single
section, and the diagram \eqref{3x3-diag} does not exist by the same reason as in the proof
of Lemma \ref{stable2}.
}
\end{remark}

\section{Hom's and Ext's of elementary transforms}

We start by the following general claim.

\begin{lemma}\label{quot-homs}
Let $\iota:\Sigma\hookrightarrow\p3$ be a reduced locally complete intresection curve, $M$ an invertible
$\osigma$-sheaf, and $E$ a rank 2 locally free sheaf on $\p3$ equipped with a surjective map $\varphi:E\to\iota_*M$.
For the torsion free sheaf $F:=\ker\varphi$, we have
$\inhom(E,E)/\inhom(F,F) \simeq \iota_*M^2\otimes\det(E)^\dual$.
\end{lemma}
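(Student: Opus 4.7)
The plan is to construct an explicit surjective morphism
$\Phi \colon \inhom(E,E) \twoheadrightarrow \iota_* M^{\otimes 2} \otimes \det(E)^\dual$
and show that its kernel is the natural image of $\inhom(F,F)$ inside $\inhom(E,E)$. The identification of $\inhom(F,F)$ with a subsheaf of $\inhom(E,E)$ proceeds as follows. Applying $\inhom(-,E)$ to $0\to F\to E\to\iota_*M\to0$ and using that $\inhom(\iota_*M,E)=0$ (as $E$ is torsion-free) together with $\inext^1(\iota_*M,E)=\inext^1(\iota_*M,\op3)\otimes E=0$ --- the latter because $\Sigma$ is a codimension-$2$ LCI, so by Grothendieck duality (or an explicit Koszul computation) $\inext^1(\iota_*M,\op3)=0$ --- one obtains an isomorphism $\inhom(E,E)\simto\inhom(F,E)$. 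Applying $\inhom(F,-)$ to the same sequence and using $\inhom(\iota_*M,F)=0$ (since $F$ is torsion-free) then gives the desired injection $\inhom(F,F)\hookrightarrow\inhom(F,E)\simeq\inhom(E,E)$.

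To define $\Phi$, I would exploit the rank-$2$ identification $E^\dual\simeq E\otimes\det(E)^\dual$ to rewrite $\inhom(E,E)\simeq E^{\otimes 2}\otimes\det(E)^\dual$, and set
$$
\Phi:=(\varphi\otimes\varphi)\otimes\mathrm{id}_{\det(E)^\dual},
$$
where the projection formula identifies $\iota_*M\otimes_{\op3}\iota_*M$ with $\iota_*M^{\otimes 2}$. Surjectivity of $\Phi$ is immediate from surjectivity of $\varphi$. For the equality $\ker\Phi=\inhom(F,F)$ I would work stalk-wise over $\Sigma$: the LCI hypothesis provides a regular sequence $(f,g)$ locally generating $\cali_\Sigma$, and one can choose a local frame $e_1,e_2$ of $E$ with $\varphi(e_1)$ generating $M$ and $\varphi(e_2)=0$, so that $F_x=\cali_{\Sigma,x}e_1\oplus\mathcal{O}_x e_2$. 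A short matrix computation then shows that an endomorphism $\psi$ of $E_x$ preserves $F_x$ exactly when its $(1,2)$-entry lies in $\cali_\Sigma$, which is precisely the vanishing condition for $\Phi(\psi)$.

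The main content is building $\Phi$ with the correct target; the one mildly technical step is the vanishing $\inext^1(\iota_*M,\op3)=0$ coming from the codimension-$2$ LCI hypothesis on $\Sigma$, which is what makes $\inhom(F,F)$ a well-defined subsheaf of $\inhom(E,E)$ via the chain of identifications above. Once that is in place, the stalk-wise verification is routine, and so neither step looks like a serious obstacle.
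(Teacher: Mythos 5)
Your proof is correct, and its first half coincides with the paper's: both arguments use the vanishings $\inhom(\iota_*M,E)=\inext^1(\iota_*M,E)=0$ to obtain $\inhom(E,E)\simto\inhom(F,E)$ and thereby realize $\inhom(F,F)$ inside $\inhom(E,E)$ as the subsheaf of endomorphisms of $E$ carrying $F$ into $F$. (Two small remarks on this part: your derivation of $\inext^1(\iota_*M,\op3)=0$ from the Koszul resolution is fine, but the LCI hypothesis is not needed there, since codimension-two support already gives $\inext^q(\iota_*M,\op3)=0$ for $q<2$, which is how the paper phrases it; and your appeal to $\inhom(\iota_*M,F)=0$ is superfluous, as left-exactness of $\inhom(F,-)$ already gives the injection $\inhom(F,F)\into\inhom(F,E)$.) Where the two arguments genuinely diverge is in the computation of the quotient. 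The paper proceeds diagrammatically: it assembles the rows coming from $\inhom(E,-)$ and $\inhom(F,-)$, uses $\inhom(-,F)$ and the identification $\inext^1(\iota_*M,F)\simeq\iota_*\osigma$ to build the left column, and the Snake Lemma then yields $0\to\iota_*\osigma\to E^\dual\otimes\iota_*M\to\coker\tau\to0$; the rank-2 duality $E^\dual\simeq E\otimes\det(E)^\dual$ enters only at the last step, to identify that cokernel with $\iota_*M^2\otimes\det(E)^\dual$. You instead invoke the rank-2 duality at the outset, building the explicit global surjection $\Phi=(\varphi\otimes\varphi)\otimes\mathrm{id}$ on $E\otimes E\otimes\det(E)^\dual\simeq\inhom(E,E)$ and checking stalk-wise, in a frame adapted to $\varphi$, that $\ker\Phi$ is exactly the sheaf of endomorphisms preserving $F$. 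The one delicate point in your route, which you assert rather than verify, is that under the wedge-pairing identification $\inhom(E,E)\simeq E\otimes E\otimes\det(E)^\dual$ the coefficient of $e_1\otimes e_1$ (the only basis tensor not killed by $\varphi\otimes\varphi$) really does correspond to the $(1,2)$-entry of the associated endomorphism; this checks out, since the identification sends $e_1\otimes e_1\otimes(e_1\wedge e_2)^\dual$ to $e_2^\dual\otimes e_1$ up to sign, so your local criterion for preserving $F$ and the vanishing condition for $\Phi$ agree. In terms of trade-offs: the paper's approach stays coordinate-free and never chooses local frames, while yours produces a canonical, hands-on description of the quotient map, making the isomorphism of the lemma completely explicit; both ultimately rest on the same rank-2 self-duality, just deployed at opposite ends of the argument.
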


\begin{proof}
First, apply $\inhom(-,E)$ to the short exact sequence
\begin{equation}\label{claim-sqc}
0 \to F \to E \to \iota_*M \to 0 .
\end{equation}
Since $\iota_*M$ is a torsion sheaf supported on a curve and $E$ is locally free, we have $\inhom(\iota_*M,E)=\inext^1(\iota_*M,E)=0$, and conclude that $\inhom(E,E)\simeq\inhom(F,E)$.

Next, applying $\inhom(\iota_*M,-)$ to the sequence (\ref{claim-sqc}) and again using the vanishing of $\inhom(\iota_*M,E)$ and $\inext^1(\iota_*M,E)$, we obtain that
$\inext^1(\iota_*M,F)\simeq\inhom(\iota_*M,\iota_*M)=\iota_*\osigma$.

Now, the diagram
\begin{equation}\label{big-diagram}\begin{gathered}
\xymatrix {
         & 0 \ar[d]                          &                                    &                           &   \\
0 \ar[r] & \inhom(E,F) \ar[d] \ar[r]         & \inhom(E,E) \ar[r] \ar[d]^{\simeq} & \inhom(E,\iota_*M) \ar[r] & 0 \\
0 \ar[r] & \inhom(F,F) \ar[d] \ar[r]^{\tau}  & \inhom(E,E) \ar[r]                 & \coker\tau \ar[r]         & 0 \\
         & \iota_*\osigma \ar[d]             &                                    &                           &   \\
         & 0                                 &                                    &                           &  }\end{gathered}
\end{equation}
is obtained in the following way. The first row comes from applying $\inhom(E,-)$ to the sequence (\ref{claim-sqc}). The second row comes from applying $\inhom(F,-)$ to the same sequence, and using the identification $\inhom(E,E)\simeq\inhom(F,E)$. The left column comes from applying $\inhom(-,F)$ to (\ref{claim-sqc}), noting that $\inext^1(\iota_*M,F)\simeq\iota_*\osigma$ and $\inext^1(\iota_*M,E)=0$.

Now the Snake Lemma provides us with the short exact sequence
$$ 0 \to \iota_*\osigma \to E^\dual\otimes\iota_*M \to \coker\tau \to 0 ,$$
since $\inhom(E,\iota_*M)\simeq E^\dual\otimes\iota_*M$. Since $E$ has rank $2$, it easily follows that \linebreak
$\inhom(E,E)/\inhom(F,F) = \coker\tau \simeq \iota_*(M^2)\otimes\det(E)^\dual$, as desired.
\end{proof}

Next, we apply the previous Lemma to the case of elementary transformations of locally free instanton sheaves.

\begin{lemma}\label{global exts}
Let $E$ be a rank $2$ locally free instanton sheaf of charge $n$, and let $(\Sigma,L,\varphi)$ be elementary transformation data for $E$. If $L$ is an invertible $\osigma$-sheaf satisfying $h^1((\iota_*L^2)(4))=0$, then for the sheaf $F=:\ker\varphi$, we have
\begin{itemize}
\item[(i)] $\ext^1(F,F) = H^0(\inext^1(F,F))\oplus H^1(\inhom(F,F))$;
\item[(ii)] $\ext^2(F,F) = H^1(\inext^1(F,F)) \simeq H^1(\inext^2((\iota_*L)(2),F))$;
\end{itemize}
Furthermore, one has
\begin{equation}\label{inhom dim}
h^1(\inhom(F,F)) = 8n + h^0((\iota_*L^2)(4)) - 3
\end{equation}
\end{lemma}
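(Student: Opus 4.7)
The plan is to compare the Ext groups of $F$ with those of $E$ via the short exact sequence
$$0 \to \inhom(F,F) \to \inhom(E,E) \to \iota_*L^2(4) \to 0$$
supplied by Lemma \ref{quot-homs} (with $\det E = \op3$ and $M = L(2)|_{\Sigma}$), and then to run the local-to-global spectral sequence $E_2^{p,q} = H^p(\inext^q(F,F)) \Rightarrow \ext^{p+q}(F,F)$, which by Lemma \ref{exts}(i) has nonzero rows only for $q = 0, 1$.

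The first step is to take cohomology of this sequence. The sheaf $\inhom(E,E) \simeq E\otimes E^{\dual}$ satisfies $h^0 = 1$ (simplicity of $E$), $h^1 = 8n-3$ and $h^2 = 0$ (smoothness of $\cali(n)$ at $E$), and $h^3 = \ext^3(E,E) = 0$ by Lemma \ref{exts}(iii). Since $\iota_*L^2(4)$ has $1$-dimensional support, $h^i(\iota_*L^2(4)) = 0$ for $i \geq 2$, while $h^1(\iota_*L^2(4)) = 0$ by hypothesis. Combined with $h^0(\inhom(F,F)) = 1$ (Lemma \ref{simple h0}, applied to the nontrivial rank $2$ instanton sheaf $F$), the long exact cohomology sequence yields at once
$$H^2(\inhom(F,F)) = H^3(\inhom(F,F)) = 0, \qquad h^1(\inhom(F,F)) = 8n - 3 + h^0(\iota_*L^2(4)),$$
the second of which is precisely \eqref{inhom dim}.

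These vanishings force the spectral sequence differentials $d_2^{0,1}\colon H^0(\inext^1(F,F)) \to H^2(\inhom(F,F))$ and $d_2^{1,1}\colon H^1(\inext^1(F,F)) \to H^3(\inhom(F,F))$ to be zero. The abutment filtration then delivers a short exact sequence of $\CC$-vector spaces
$$0 \to H^1(\inhom(F,F)) \to \ext^1(F,F) \to H^0(\inext^1(F,F)) \to 0,$$
which splits, yielding (i), together with an isomorphism $\ext^2(F,F) \simeq H^1(\inext^1(F,F))$.

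It remains to identify $\inext^1(F,F)$ with $\inext^2(\iota_*L(2), F)$. I would apply $\inhom(-,F)$ to the defining sequence $0 \to F \to E \to \iota_*L(2) \to 0$: since $E$ is locally free, $\inext^p(E,F) = 0$ for $p \geq 1$, and the connecting homomorphisms in the resulting long sequence of local Ext sheaves give the desired isomorphism; applying $H^1$ finishes (ii). I do not anticipate any serious obstacle; the whole argument rests on the interplay of Lemma \ref{quot-homs}, the vanishings known for the instanton bundle $E$, and the single hypothesis $h^1(\iota_*L^2(4)) = 0$, which together kill $H^p(\inhom(F,F))$ for $p \ge 2$ and trivialise the spectral sequence in the relevant range.
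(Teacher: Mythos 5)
Your argument follows essentially the same route as the paper's own proof: the exact sequence supplied by Lemma \ref{quot-homs}, the vanishings $H^2(\inhom(F,F))=H^3(\inhom(F,F))=0$ forced by the hypothesis $h^1((\iota_*L^2)(4))=0$ together with $\ext^2(E,E)=\ext^3(E,E)=0$, the resulting degeneration of the local-to-global spectral sequence, and the identification $\inext^1(F,F)\simeq\inext^2((\iota_*L)(2),F)$ obtained by applying $\inhom(-,F)$ to the defining sequence of $F$. Parts (i) and (ii) are correct as you wrote them.

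There is, however, a genuine gap in your derivation of formula \eqref{inhom dim}: you assume throughout that $E$ is simple, with $h^0(\inhom(E,E))=1$ and $h^1(\inhom(E,E))=8n-3$. This holds only when $E$ is nontrivial, i.e.\ $n\ge 1$. The paper explicitly regards $\op3^{\oplus 2}$ as an instanton sheaf of charge $0$, so the lemma's hypotheses admit $E=\op3^{\oplus2}$, $n=0$; in that case $E$ is not simple, $h^0(\inhom(E,E))=4$, $h^1(\inhom(E,E))=0$, and ``$8n-3=-3$'' is meaningless as a cohomology dimension. This is not a vacuous corner case: it is exactly the case invoked in Proposition \ref{prop 4.1} when $n=m$, i.e.\ when $F^{\dual\dual}$ is trivial, where \eqref{inhom dim} is used to obtain $h^1(\inhom(F,F))=4m-4$. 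The repair is cheap: the five-term cohomology sequence of \eqref{sqc inhom} gives in general
\[
h^1(\inhom(F,F)) = h^1(\inhom(E,E)) + h^0((\iota_*L^2)(4)) + h^0(\inhom(F,F)) - h^0(\inhom(E,E)),
\]
and $h^0(\inhom(F,F))=1$ still holds because $F$ is a nontrivial rank $2$ instanton sheaf (its charge is $n+d>0$); substituting $\bigl(h^0,h^1\bigr)(\inhom(E,E))=(4,0)$ for trivial $E$ recovers the same formula $8\cdot 0 + h^0((\iota_*L^2)(4)) - 3$. But this second case must be stated, exactly as the paper does in the final paragraph of its proof. Note that your vanishing arguments for $H^2$ and $H^3$ of $\inhom(F,F)$, and hence parts (i) and (ii), do go through verbatim for trivial $E$, since $\ext^2(\op3^{\oplus2},\op3^{\oplus2})=\ext^3(\op3^{\oplus2},\op3^{\oplus2})=0$; only the numerical count needs the case distinction.
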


\begin{proof}
We use the local-to-global spectral sequence for Ext's, whose $E_2$ term is of the form
$$ E^{pq}_2 = H^p(\inext^q(F,F)) . $$

First, from Lemma (\ref{exts}(i)) we know that $\inext^q(F,F)=0$ for $q=2,3$, killing the terms $E^{pq}_2$ for $q=2,3$. Moreover, since $\inext^1(F,F)$ is suported on $\Sigma$, it follows that $E^{p1}_2$ for $p=2,3$.

Applying Lemma \ref{quot-homs} to the sequence
$$ 0 \to F(-2) \to E(-2) \to \iota_*L \to 0 ,$$
we obtain the short exact sequence
\begin{equation}\label{sqc inhom}
0 \to \inhom(F,F) \to \inhom(E,E) \to (\iota_*L^2)(4) \to 0.
\end{equation}
Since $\iota_*L^2$ is supported in dimension $1$, passing to cohomology, we conclude that
$H^3(\inhom(F,F))\simeq Ext^3(E,E)=0$ by (\ref{exts} (iii)); thus $E^{03}_2$ also vanishes.

Finally, sequence (\ref{sqc inhom}) and the hypothesis $h^1((\iota_*L^2)(4))=0$ imply that \linebreak
$H^2(\inhom(F,F))\simeq H^2(\inhom(E,E))$. The last cohomology vanishes by hypothesis, thus also $E^{02}_2=0$.

It then follows that the spectral sequence already degenerates at the second page, and one concludes that:
$$ \ext^1(F,F) = H^0(\inext^1(F,F)) \oplus H^1(\inhom(F,F)) ~~{\rm and}~~
\ext^2(F,F) = H^1(\inext^1(F,F)), $$
as desired. The isomorphism
$H^1(\inext^1(F,F)) \simeq H^1(\inext^2((\iota_*L)(2),F))$ is obtained from applying the functor
$\inhom(-,F)$ to the sequence (\ref{ses-deformation}).

In order to establish formula (\ref{inhom dim}), consider the cohomology exact sequence associated to the exact triple of sheaves (\ref{sqc inhom}):
\begin{multline*}
0 \to H^0(\inhom(F,F))   \to H^0(\inhom(E,E)) \to H^0((\iota_*L^2)(4)) \to
 \\  H^1(\inhom(F,F)) \to H^1(\inhom(E,E)) \to 0.
\end{multline*}
Since every nontrivial rank $2$ instanton sheaf is simple, we have \linebreak $h^0(\inhom(F,F))=1$. Then, counting dimensions in the sequence above, we have
$$ h^1(\inhom(F,F)) = h^1(\inhom(E,E)) + h^0((\iota_*L^2)(4)) + 1 - h^0(\inhom(E,E)). $$

There are now two cases to consider. First, if $E$ is a nontrivial rank 2 locally free instanton of charge $n$, it follows that $\mathrm{Ext}^2(E,E)=0$ (see \cite{JV}). Moreover, $h^0(\inhom(E,E))=1$, since $E$ is simple. Therefore, $h^1(\inhom(E,E))=8n-3$, hence the desired formula follows.

On the other hand, if $E=\op3^{\oplus 2}$ (i.e. if $n=0$), then $h^0(\inhom(E,E))=4$ and $h^1(\inhom(E,E))=0$, hence one also obtains formula (\ref{inhom dim}).
\end{proof}

Lemma \ref{global exts} prompts us to characterize the sheaf $\inext^1(F,F)$ when $F$ is an elementary transform of a rank $2$ locally free instanton sheaf. Applying the functor
$\inhom(F,-)$ to sequence (\ref{ses-deformation}) we obtain
\begin{multline}\label{inhom sqc 1}
0 \to \inhom(F,F) \to \inhom(F,E) \to \inhom(F,(\iota_*L)(2)) 
\to \\ \inext^1(F,F) \to \inext^1(F,E) \to \inext^1(F,(\iota_*L)(2)) \to 0 , 
\end{multline}
since $\inext^2(F,F)=0$ by Lemma \ref{exts}. Invoking now Lemma \ref{quot-homs} and the isomorphism
$\inhom(F,E)\simeq\inhom(E,E)$ (see the first paragraph of the proof of Lemma \ref{quot-homs}) sequence (\ref{inhom sqc 1}) can be rewritten in the following way:
\begin{multline}\label{inhom sqc 2}
0 \to (\iota_*L^2)(4) \to \inhom(F,(\iota_*L)(2))  
\to \inext^1(F,F) \to\\ \inext^1(F,\op3)\otimes E \to \inext^1(F,(\iota_*L)(2)) \to 0 . 
\end{multline}

In the next section, we will carefully analyze each term of this exact sequence for elementary transforms along rational curves.

\section{Definition and properties of $\overline{\cald(m,n)}$} \label{ratcurves}

Let $\calr_{0}(m)$ denote the space of nonsingular rational curves of degree $m$ on $\p3$. It is well-known (see e.~g. \cite{EV}) that $\calr_{0}(m)$ is
a nonsingular irreducible quasiprojective variety of dimension $4m$ and that the set
$$
\calr^*_{0}(m):=\{\Gamma\in \calr_{0}(m) ~|~ 
N_{\Gamma/\p3}\simeq\mathcal{O}_\Gamma((2m-1)\pt )^{\oplus 2}\},
$$
where $\pt$ denotes a point of $\Gamma$, is a dense open subset of $\calr_{0}(m)$ for $m\ge1,\ m\ne2$. Besides, it is obvious that
$N_{\Gamma/\p3}\simeq\mathcal{O}_\Gamma(2\pt )\oplus\mathcal{O}_\Gamma(4\pt )$ for every
$\Gamma\in\calr_{0}(2)$. We set $\calr^*_{0}(2):=\calr_{0}(2)$.

\begin{lemma}\label{res-trivial}
Let $E$ be an instanton sheaf. Then for any $m\geq 1$, the restriction of $E$ to a generic
rational curve of degree $m$ in $\mathbb P^3$ is trivial.
\end{lemma}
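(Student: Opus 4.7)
My plan is to split the proof into two stages: first establish the $m=1$ case (triviality on a generic line), then bootstrap to arbitrary $m\ge 2$ via $m$-fold covers of lines together with an upper semi-continuity argument on an irreducible space of morphisms. The key characterisation I will use is that a rank-$r$ vector bundle of degree $0$ on $\mathbb{P}^1$ is trivial iff $h^0$ of its twist by $\op1(-1)$ vanishes, and this is an open condition in flat families.

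For $m=1$, I would restrict the linear monad \eqref{monad1} to a line $\ell$ disjoint from the $1$-dimensional locus $\mathrm{Sing}(E)$, so that the restriction remains a monad on $\mathbb{P}^1\simeq \ell$ with cohomology $E|_\ell$. The two short exact sequences
\[
0\to \ol(-1)^{\oplus n}\to K|_\ell\to E|_\ell\to 0,\qquad
0\to K|_\ell\to \ol^{\oplus 2n+r}\xrightarrow{\beta|_\ell}\ol(1)^{\oplus n}\to 0,
\]
together with the observation that the induced map $H^0(\beta|_\ell)\colon\mathbb{C}^{2n+r}\to\mathbb{C}^{2n}$, which can be written as the joint evaluation $v\mapsto(\beta(p)v,\beta(q)v)$ at two distinct points of $\ell$, is surjective for generic $\ell$, yield $h^1(E|_\ell)=0$, so all splitting coefficients of $E|_\ell$ are $\ge -1$. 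Triviality then follows by invoking the $\mu$-semistability of $E$ (Lemma \ref{simple h0} in the rank-two setting) combined with a Grauert--M\"{u}lich-type restriction statement: a $\mu$-semistable sheaf of degree $0$ on $\mathbb{P}^3$ restricts to a $\mu$-semistable---hence balanced, hence trivial---bundle on the generic line.

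For $m\ge 2$, consider the irreducible variety $W:=\operatorname{Mor}_m(\mathbb{P}^1,\mathbb{P}^3)$, a Zariski open subset of $\mathbb{P}\bigl(H^0(\op1(m))^{\oplus 4}\bigr)\simeq \mathbb{P}^{4m+3}$, and its open subset $W^\circ\subset W$ of morphisms whose image avoids $\mathrm{Sing}(E)$. Over $W^\circ$ the pullback $F^*E$ along the evaluation map $F\colon(p,f)\mapsto f(p)$ is locally free, hence flat over $W^\circ$; by upper semi-continuity the locus $U=\{f\in W^\circ:h^0(f^*E\otimes \op1(-1))=0\}$ is open. It is non-empty, because for any line $\ell$ produced by the base case with $E|_\ell\simeq\ol^{\oplus r}$ and any $m$-fold branched cover $f\colon\mathbb{P}^1\to\ell\subset\mathbb{P}^3$ one has $f^*E\simeq \op1^{\oplus r}$, so $f\in U$. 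By irreducibility of $W$, $U$ is dense; intersecting with the open dense sublocus of embeddings onto smooth degree-$m$ rational curves and descending through the $PGL_2$-quotient $W^{\mathrm{emb}}\to \calr_0(m)$ produces the desired dense open subset of $\calr_0(m)$. The main subtlety is that one can transport triviality from a degenerate non-embedding (the $m$-fold cover of a line) to the generic embedding precisely because $W$ is irreducible, so a non-empty Zariski open subset is automatically dense and meets every other non-empty open subset.
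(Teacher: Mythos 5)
Your proof is correct, and for $m\ge 2$ it takes a genuinely different route from the paper's. The base case $m=1$ is the same in substance (the paper simply invokes $\mu$-semistability plus Grauert--M\"ullich), but the real divergence is the bootstrap. The paper degenerates inside the Hilbert scheme: it restricts $E$ to a generic chain of $m$ lines (trivial by induction on $m$), smooths the chain to a nonsingular rational curve of degree $m$ using the Hartshorne--Hirschowitz smoothing theorem \cite{HH2}, makes an \'etale base change to obtain a cross-section $t\mapsto x_t$ of the smoothing family, and applies semicontinuity to $h^0(E|_{\Gamma_t}(-x_t))$, using the rank-$2$ splitting $E|_{\Gamma_t}\simeq\mathcal{O}_{\Gamma_t}(k_t\pt)\oplus\mathcal{O}_{\Gamma_t}(-k_t\pt)$. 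You degenerate instead inside the irreducible space of \emph{parametrized} curves $\operatorname{Mor}_m(\mathbb{P}^1,\mathbb{P}^3)$, replacing the chain of lines by an $m$-fold cover of a single trivializing line, and transport triviality to generic embeddings by semicontinuity of $h^0(f^*E(-1))$, then descend along the $PGL_2$-action to $\calr_0(m)$. Your route avoids the smoothing theorem (a nontrivial external input), the induction on $m$, and the cross-section/\'etale base change (your source is a fixed $\mathbb{P}^1$, so the twist $\mathcal{O}_{\mathbb{P}^1}(-1)$ is globally available), and your triviality criterion $h^0(f^*E(-1))=0$ works in any rank; it also directly produces a dense open subset of $\calr_0(m)$ rather than a single family. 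What the paper's route buys is that it stays inside the Hilbert scheme of honest subcurves of $\mathbb{P}^3$ --- the reducible-curve degeneration technique is precisely the one re-used in Section 7 (Propositions \ref{C-def}--\ref{Dmn in In}) --- and its semicontinuity-with-cross-section mechanics are recycled almost verbatim in Lemma \ref{R*0ME}.

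Two small caveats on your $m=1$ step. First, your paraphrase of Grauert--M\"ullich --- that a $\mu$-semistable sheaf of degree $0$ restricts to a $\mu$-semistable bundle on the generic line --- is stronger than the actual theorem, which only bounds by $1$ the successive slope gaps in the Harder--Narasimhan filtration of the restriction; for rank $2$ and $c_1=0$ this weaker statement already forces the splitting type $(0,0)$, which is all that is needed here (and is exactly how the paper uses it). Second, your monad computation of $h^1(E|_\ell)$ is then redundant, which is just as well: the generic surjectivity of $H^0(\beta|_\ell)$, i.e.\ the statement that $\dim(K_p\cap K_q)=r$ for generic $p,q$, is asserted rather than proved, and proving it essentially amounts to the non-jumping statement you are trying to establish. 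Neither caveat affects the correctness of the overall argument.
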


\begin{proof}
For $m=1$, the assertion follows from the $\mu$-semistability of $E$ and the Grauert--Müllich
Theorem \cite[Theorem 3.1.2]{HL}. For $m>1$, we start by restriction to a generic chain of $m$ lines and then smooth out the chain of lines to a nonsingular rational curve of degree $m$.

By a chain of lines we mean a curve
$\Gamma_0=\ell_1\cup... \cup \ell_m$ in $\mathbb{P}^3$ such that $\ell_1,...,\ell_m$ are distinct lines and $\ell_i\cap\ell_j\neq\varnothing$ if and only if $|i-j|\leq 1$. It is well known (see e.~g. \cite[Corollary 1.2]{HH2}) that a chain of lines
$\Gamma_0=\ell_1\cup... \cup \ell_m$ in $\mathbb{P}^3$ considered as a reducible curve of degree $m$ can be deformed in a flat family with a smooth one-dimensional base $(\Delta, 0)$ to a nonsingular rational curve $\Gamma\in \calr_{0}(m)$. Making an étale base change, we can obtain such a smoothing with a cross-section.

By the case $m=1$, the restriction of $E$ to a generic line is trivial. By induction on $m$, we easily deduce that for a generic chain of lines $\Gamma_0$, the restriction of $E$
to $\Gamma_0$ is also trivial: $E|_{\Gamma_0}\simeq\mathcal{O}_{\Gamma_0}^{\oplus2}$, which is equivalent to saying that $E|_{\ell_i}\simeq\mathcal{O}_{\ell_i}^{\oplus2}$ for all
$i=1,\ldots,m$. Choosing a smoothing $\{\Gamma_t\}_{t\in\Delta}$ of $\Gamma_0$
with a cross-section $t\mapsto x_t\in\Gamma_t$ as above, we remark that
$E|_{\Gamma_t}\simeq \mathcal{O}_{\Gamma_t}(k_t\pt)\oplus\mathcal{O}_{\Gamma_t}(-k_t\pt)$
for some integer $k_t$ which may depend on $t$. The triviality of
$E|_{\Gamma_t}$ is thus equivalent to the vanishing of $h^0(E|_{\Gamma_t}(-\pt))$.
Using the semi-continuity
of $h^0(E|_{\Gamma_t}(-x_t))$, we see that $E|_{\Gamma_t}$ is trivial for generic $t\in\Delta$.
\end{proof}

\begin{lemma}\label{R*0ME}
Let $1\le m\le n$, and let $E$ be an instanton sheaf of charge $n$. Then
$$
\calr^*_{0}(m)_E := \{\:\Gamma\in \calr^*_{0}(m)~|~ E|_\Gamma\simeq\mathcal{O}_\Gamma^{\oplus2}\ \}
$$
is a nonempty open subset of $\calr^*_{0}(m)$. Moreover,
$$
\calb(m,n) := \{\:([E],\Gamma)\in\cali(n-m)\times \calr^*_{0}(m)\ |\
\Gamma\in \calr^*_{0}(m)_E\ \}
$$
is a nonempty open subset of $\cali(n-m)\times \calr^*_{0}(m)$, whose projections to
both factors are surjective.
\end{lemma}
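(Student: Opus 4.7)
The plan is to derive both statements from Lemma~\ref{res-trivial}, semicontinuity of splitting types, and the $PGL_4(\mathbb{C})$-action on $\mathbb{P}^3$.

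For the first statement, openness of $\calr^*_0(m)_E$ in $\calr^*_0(m)$ follows by upper semicontinuity: a generic $\Gamma \in \calr^*_0(m)$ is disjoint from the at most one-dimensional singular locus of $E$ (Theorem~\ref{jg-thm}), so $E|_\Gamma$ is a rank $2$ vector bundle on $\Gamma \simeq \mathbb{P}^1$, splitting as $\calo_\Gamma(a) \oplus \calo_\Gamma(-a)$ with $a \geq 0$; triviality is equivalent to the vanishing of the upper semicontinuous function $\Gamma \mapsto h^0(E|_\Gamma(-\pt))$. Non-emptiness is immediate from Lemma~\ref{res-trivial} applied to $E$, together with the density of $\calr^*_0(m)$ in $\calr_0(m)$.

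For the second statement, openness of $\calb(m,n)$ in $\cali(n-m) \times \calr^*_0(m)$ follows from the relative version of the same semicontinuity argument applied to a (possibly \'etale-local) universal instanton on $\mathbb{P}^3 \times \cali(n-m)$ restricted to the universal rational curve over $\calr^*_0(m)$; non-emptiness then follows from the first part. The first projection $p_1 : \calb(m,n) \to \cali(n-m)$ has fiber $\{[E]\} \times \calr^*_0(m)_E$ over $[E]$, nonempty by the first part, hence is surjective.

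The main obstacle is the surjectivity of the second projection $p_2 : \calb(m,n) \to \calr^*_0(m)$. Here I would first note that $\calb(m,n)$ is preserved by the diagonal $PGL_4$-action on the product, so $p_2(\calb(m,n))$ is a nonempty $PGL_4$-invariant open subset of $\calr^*_0(m)$. When $n = m$, we have $\cali(0) = \{[\op3^{\oplus 2}]\}$ and $\op3^{\oplus 2}|_\Gamma$ is trivially trivial, giving surjectivity at once. When $n > m$ and $m \leq 3$, $PGL_4$ acts transitively on $\calr^*_0(m)$ (any two lines, any two smooth conics, any two twisted cubics in $\mathbb{P}^3$ are projectively equivalent), so the invariant open set $p_2(\calb(m,n))$ equals all of $\calr^*_0(m)$. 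For $m \geq 4$, the $PGL_4$-orbits become proper subvarieties (since $\dim\calr^*_0(m) = 4m > 15$) and a finer argument is needed: fixing $\Gamma_0 \in \calr^*_0(m)$, I would show that $\{[E] \in \cali(n-m) \mid E|_{\Gamma_0} \not\simeq \calo_{\Gamma_0}^{\oplus 2}\}$ is a proper closed subvariety of $\cali(n-m)$ by exhibiting a single instanton with trivial restriction to $\Gamma_0$, obtained via a chain-of-lines degeneration of $\Gamma_0$ (as in the proof of Lemma~\ref{res-trivial}) together with a concatenation of elementary transformations of the trivial rank $2$ sheaf along lines in the chain, and then transporting the resulting triviality through the smoothing family using openness of the condition $a = 0$. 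The technical heart is to verify that such an instanton bundle of charge $n-m$ exists with trivial restriction to every member of a suitably chosen one-parameter smoothing including $\Gamma_0$ as a fiber.
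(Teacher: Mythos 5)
Your semicontinuity argument for the two openness statements follows the same strategy as the paper's proof, but it treats as standard exactly the step that the paper's proof exists to justify. The criterion ``$E|_\Gamma$ trivial $\Leftrightarrow h^0(E|_\Gamma(-\pt))=0$'' involves a twist by a point chosen on each curve, and upper semicontinuity of $h^0$ can only be invoked for the fibers of a single coherent sheaf flat over the base; since the universal curve over $\calr^*_{0}(m)$ admits no section, the sheaves $E|_\Gamma(-\pt)$ do not form such a family. The paper's proof is devoted precisely to this point: it fixes a plane $\Pi$ meeting a given $\Gamma$ transversely in $m$ points, passes to the degree-$m$ étale cover $\tilde U\to U$ of curves marked with one point of intersection with $\Pi$, over which the universal curve acquires $m$ disjoint sections $x_i$, applies semicontinuity upstairs to $h^0(E|_{\Gamma_{\tilde u}}(-x_1(\tilde u)))$, and descends the resulting open set using the openness of finite étale morphisms; the relative statement for $\calb(m,n)$ is obtained by running the same construction against an étale-local (quasi-)universal sheaf, as you indicate. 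So either reproduce this device, or replace your criterion by a section-free one, e.g. $E|_\Gamma\simeq\calo_\Gamma^{\oplus 2}$ if and only if $h^0\big(\inhom(E,E)|_\Gamma\big)\le 4$, to which semicontinuity applies directly. As written, this step is a gap. (Curves meeting $\mathrm{Sing}(E)$ cause no trouble, since there the restriction acquires torsion and the relevant $h^0$ is positive, but a complete write-up should say so.)

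The more serious issue is surjectivity of the second projection. Your case $n=m$ and your $\mathrm{PGL}_4(\CC)$-transitivity argument for $m\le 3$ are correct (and do not appear in the paper), but for $m\ge 4$ your sketch fails as stated: openness of the triviality locus in a smoothing family produces triviality only on some unspecified neighborhood of the special fibre, while $\Gamma_0$ is one \emph{prescribed} fibre, and no genericity statement can be forced to hold at a prescribed point --- this quantifier reversal (``for every $E$ a generic $\Gamma$ works'' versus ``for every $\Gamma$ some $E$ works'') is the whole difficulty. Moreover, transforming $\op3^{\oplus 2}$ along lines of the chain itself yields sheaves singular along those lines, whose restriction to the chain is not even locally free, so such sheaves cannot witness triviality. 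You should know, however, that the paper's own proof is silent here: it establishes only the openness statements (surjectivity onto $\cali(n-m)$ being immediate from part one), so you have correctly isolated the one claim of the lemma for which the paper supplies no argument. A repair in the spirit of the paper, but only available after Section 7, is to induct on $c=n-m$: given a charge-$(c-1)$ instanton bundle $E'$ with $E'|_{\Gamma_0}$ trivial, transform it along a generic line $\ell$ disjoint from $\Gamma_0$ to get $F\in\cald(1,c)$ with $F|_{\Gamma_0}$ still trivial; by Proposition \ref{Hooft}, $[F]\in\partial\cali(c)$, so $F$ deforms in a flat one-parameter family to instanton bundles $E_t$, and since $\Gamma_0$ is now fixed, ordinary semicontinuity along this single family gives $E_t|_{\Gamma_0}\simeq\calo_{\Gamma_0}^{\oplus 2}$ for $t$ near $0$. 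This is not circular, because Sections 6 and 7 use only the openness and nonemptiness parts of the present lemma, never the surjectivity of the second projection.
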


\begin{proof}
To prove the first assertion, note that  $\calr^*_{0}(m)_E$ is nonempty by
Lemma \ref{res-trivial}. It thus suffices to show that any $\Gamma\in
\calr^*_{0}(m)_E$ has a Zariski open neighborhood in $\calr^*_{0}(m)$, entirely contained
in $\calr^*_{0}(m)_E$. 

For any $\Gamma\in
\calr^*_{0}(m)_E$, we can find a plane $\Pi\simeq\p2$
in $\p3$ meeting $\Gamma$ transversely in $m$ distinct points. Then the set $U=U(\Pi)\subset\calr^*_{0}(m)$ of curves $\Gamma'\in \calr^*_{0}(m)$ that meet
$\Pi$ transversely in $m$ points is a Zariski neighborhood of $\Gamma$ as a point of
$\calr^*_{0}(m)$. Define
$$
\tilde U=\tilde U(\Pi):=\{\:(x,\Gamma)\in \Pi\times U\ | \ x\in \Gamma\ \}.
$$
Let $\boldsymbol{\Gamma}\subset\p3\times\calr^*_{0}(m)$ be the universal degree-$m$ rational curve over
$\calr^*_{0}(m)$, and for any morphism $T\to \calr^*_{0}(m)$, denote by
$\boldsymbol{\Gamma}_T$ the pullback $T\times_{\calr^*_{0}(m)}\boldsymbol{\Gamma}
\subset \p3\times T$
of $\boldsymbol{\Gamma}$ via this morphism. For $t\in T$, we will denote by
$\Gamma_t$ the fiber of $\boldsymbol{\Gamma}_T$ over $t$.
The natural map $\tilde U\to U$ is an étale covering of degree $m$, and the pullback
$\boldsymbol{\Gamma}_{\tilde U}\to \tilde U$ admits $m$ disjoint cross-sections
$x_i:\tilde U \to\boldsymbol{\Gamma}_{\tilde U}$ 
($i=1,\ldots, m$) with images in $\Pi\times\tilde U$. For ${\tilde u} \in \tilde U$, the triviality of
$E|_{\Gamma_{\tilde u}}$ is equivalent to the vanishing $h^0(E|_{\Gamma_{\tilde u}}(-x_1({\tilde u})))=0$, so, by
the semi-continuity of $h^0$, the subset $\tilde V\subset\tilde U$ of points ${\tilde u}\in\tilde U$
such that $E|_{\Gamma_{\tilde u}}$ is trivial is open in $\tilde U$. By the openness of finite étale morphisms, the image $V$ of $\tilde V$ in $U$ is open. This is an open neighborhood of
$\Gamma$ contained in $\calr^*_{0}(m)_E$, so the first assertion is proved.

The second assertion is obtained by relativizing the above argument
over \mbox{$\cali(n-m)$.} This is straightforward when
$\cali(n-m)$ possesses a universal instanton bundle. By \cite[Section 4.6]{HL},
this happens when $n-m$ is odd. Moreover, the assertion is trivial for $n-m=0$,
in which case the only instanton bundle is $\op3^{\oplus 2}$. In the case of even $n-m\geq 2$, the argument extends verbatim by replacing ``universal'' by ``quasi-universal''. 
A quasi-universal sheaf $\boldsymbol{\widetilde E}$ of multiplicity
$\nu\geq 1$ is a sheaf over $\p3\times \calm^{\mathrm {st}}(n-m)$
such that $\boldsymbol{\widetilde E}|_{\p3\times\{t\}}\simeq E_t^{\oplus \nu}$ for every $t\in\calm^{\mathrm {st}}(n-m)$, where $\calm^{\mathrm {st}}(n-m)$
is the locus of stable sheaves in $\calm(n-m)$, and $E_t$ denotes a stable sheaf whose isomorphism class is represented by $t$. By loc. cit., there exists
a quasi-universal sheaf of multiplicity $\nu=1$ if $n-m$ is odd (and then it
is indeed universal) and $\nu=2$ if $n-m$ is even.

Taking a pair $(t,\Gamma)\in \calb(m,n)$, we choose a plane $\Pi$ in $\PP^3$ meeting $\Gamma$
transversely and define $U,\tilde U$ as above, and consider the
relative degree-$m$ rational curve
$\Upsilon:=\cali(n-m)\times \boldsymbol{\Gamma}_{\tilde U}$
with a natural projection $\pi:\Upsilon\to \cali(n-m)\times {\tilde U}$. Then the set
$$
{\widetilde V}=
\left\{ (t,\tilde u)\in \cali(n-m)\times {\tilde U}\ \left| \ 
h^0\left(\boldsymbol{\widetilde E}_\Upsilon|_{\pi^{-1}(t,\tilde u)}(-x_1(\tilde u))\right)=0
\right.\right\}
$$
is open by Grauert's semicontinutity. Hence its image ${ V}$ under the étale
map $\cali(n-m)\times {\tilde U}\to \cali(n-m)\times \calr^*_{0}(m)$ is an open
neighborhood of $(t,\Gamma)$, and the openness of $\calb(m,n)$ is proved.

\end{proof}

\begin{corollary} \label{dim B}
$\calb(m,n)$ is a nonsingular irreducible quasiprojective variety of dimension\sloppy\ 
\mbox{$8n-4m-3$}.
\end{corollary}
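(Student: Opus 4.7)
The corollary is an essentially formal consequence of Lemma~\ref{R*0ME}: having exhibited $\calb(m,n)$ as a nonempty Zariski-open subset of the product $\cali(n-m)\times\calr^*_0(m)$, each of the asserted structural properties will transfer from the two factors, and the dimension simply adds.

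Concretely, I would recall two structural inputs. First, by the combined theorems of \cite{CO,JV,T1,T2} quoted in the introduction, $\cali(n-m)$ is a nonsingular, irreducible, affine variety of dimension $8(n-m)-3$ for $n-m\ge 1$ (the degenerate case $n-m=0$, where $\cali(0)$ is the single reduced point representing $\op3^{\oplus 2}$, is handled separately in the count). Second, by \cite{EV} and the remarks opening Section~\ref{ratcurves}, $\calr^*_0(m)$ is a dense open subset of the nonsingular irreducible quasiprojective variety $\calr_0(m)$ of dimension $4m$, and so it is itself nonsingular, irreducible and quasiprojective, of dimension $4m$.

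The product $\cali(n-m)\times\calr^*_0(m)$ is therefore nonsingular, irreducible, and quasiprojective, of dimension
$$
\bigl(8(n-m)-3\bigr)+4m \;=\; 8n-4m-3,
$$
and a nonempty Zariski-open subset of such a variety inherits all of these properties together with the same dimension. Since Lemma~\ref{R*0ME} has already established that $\calb(m,n)$ is such a subset, every claim of the corollary follows.

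The argument is pure bookkeeping; no further geometric input is needed. The only mildly subtle point to keep in mind is the boundary case $n-m=0$: there $\cali(0)$ is zero-dimensional rather than contributing the ``expected'' value $-3$, so in that case the dimension statement should be read with the standing proviso $1\le m\le n-1$ (or, alternatively, with the separate observation that $\calb(n,n)\simeq\calr^*_0(n)$ and a correction of the formula accordingly). Apart from this minor edge-case issue, there is no real obstacle whatsoever.
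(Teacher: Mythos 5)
Your proposal is correct and is exactly the paper's (implicit) argument: the paper states Corollary \ref{dim B} without any proof, as the formal consequence of Lemma \ref{R*0ME} together with the known structure of $\cali(n-m)$ (nonsingular, irreducible, affine, of dimension $8(n-m)-3$ by the results cited in the introduction) and of $\calr^*_0(m)$ (open dense in the nonsingular irreducible quasiprojective variety $\calr_0(m)$ of dimension $4m$), a nonempty open subset of the product inheriting all these properties and the dimension.

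Your edge-case remark is moreover a genuine catch, not a pedantic one: for $m=n$ one has $\calb(n,n)\simeq\calr^*_0(n)$ of dimension $4n$, since $\cali(0)$ is a single reduced point, so the stated formula $8n-4m-3$ fails there by $3$. The paper glosses over this. The discrepancy is silently compensated later, in Proposition \ref{variety D}: for $m=n$ the bundle $\op3^{\oplus 2}$ has the four-dimensional automorphism group $GL_2(\CC)$ rather than only scalars, so the map from the open subset of the projectivized Hom-bundle to $\cald(n,n)$ is a $PGL_2(\CC)$-fibration rather than the bijection claimed in that proof, and one still gets $\dim\cald(n,n)=(4n+4n-1)-3=8n-4$. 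Thus the headline dimension $8n-4$ of the boundary divisors is unaffected, but Corollary \ref{dim B} as literally stated requires your proviso $1\le m\le n-1$ (or the corrected value $4n$ at $m=n$).
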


For $(t,\Gamma)\in \calb(m,n)$, let $L$ be the line bundle $\mathcal{O}_\Gamma(-\pt )$ of degree $-1$ on $\Gamma$. Denoting by $\iota$ the natural embedding
$\Gamma\hookrightarrow\mathbb P^3$, we have $h^0(\iota_*L)=h^1(\iota_*L)=0$ and
$$ \Hom(E_t,(\iota_*L)(2)) \simeq \Hom(\mathcal{O}_\Gamma^{\oplus 2}, \mathcal{O}_\Gamma((2m-1)\pt )) \simeq H^{0}(\mathcal{O}_\Gamma((2m-1)\pt ))^{\oplus 2}. $$
We thus conclude that there exists a surjective map $\varphi:E_t\to(\iota_*L)(2)$, so that  
$(\Gamma,L,\varphi)$ is a set of elementary transformation data for $E_t$.

By Proposition \ref{degdata}, Corollary \ref{rk2-mu-stable} and Lemma \ref{stable1}, 
$F:=\ker\varphi$ is a stable rank $2$ instanton sheaf of charge $n$, whose
double dual is isomorphic to $E_t$. For $n\ge1$ and each $m=1,\dots,n$, consider the subset $\cald(m,n)$ of~$\calm(n)$ consisting of the isomorphism classes $[F]$ of the sheaves $F$
obtained in this way:
$$ \cald(m,n) :=
\{ [F]\in\calm(n) ~|~ [F^{\dual\dual}]\in\cali(n-m), ~~
\Gamma=\mathrm{Supp}(F^{\dual\dual}/F)\in \calr^*_{0}(m)_{F^{\dual\dual}}~,
$$
$$
~~{\rm and}~~
F^{\dual\dual}/F\simeq(\iota_*L)(2), ~ \mathrm{where}  ~ L=\mathcal{O}_\Gamma(-\pt )
\}.
$$
Let $\overline{\cald(m,n)}$ denote the closure of $\cald(m,n)$ in $\calm(n)$.

\begin{proposition}\label{variety D}
For $n\ge1$ and $1\le m\le n$, $\overline{\cald(m,n)}$ is an irreducible projective variety of dimension $8n-4$, and $\cald(m,n)$ is open in $\overline{\cald(m,n)}$.
\end{proposition}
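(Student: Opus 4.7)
The plan is to realize $\cald(m,n)$ as the image of a natural morphism from an irreducible parameter variety into $\calm(n)$, compute its dimension via fiber analysis, and then verify that it is locally closed in $\calm(n)$. To begin, I would build an irreducible parameter variety $\calh(m,n)$ over $\calb(m,n)$: after passing to an \'etale cover if necessary to obtain a (quasi-)universal instanton family $\widetilde{\mathbf E}$ and a Poincar\'e line bundle $\mathbf L$ of relative degree $-1$ on the universal curve $\boldsymbol\Gamma$, push forward $\calhom(\widetilde{\mathbf E},(\boldsymbol\iota_*\mathbf L)(2))$ to $\calb(m,n)$. The defining condition of $\calb(m,n)$ guarantees $\widetilde{\mathbf E}|_{\boldsymbol\Gamma_b}\simeq\mathcal{O}_{\boldsymbol\Gamma_b}^{\oplus 2}$ for every $b$, so Grauert's theorem makes this push-forward locally free of rank $2h^0(\mathcal O_{\PP^1}(2m-1)) = 4m$. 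Let $\calh(m,n)$ be the Zariski open subvariety of its projectivization parametrizing surjective homomorphisms $\varphi:E\to(\iota_*L)(2)$; nonemptiness follows because a generic pair of sections of $\mathcal O_\Gamma((2m-1)\pt)$ has no common zero on $\Gamma$. By the projective-bundle structure, $\calh(m,n)$ is irreducible of dimension $\dim\calb(m,n)+(4m-1)$, equal to $8n-4$ when $n>m$ and to $8m-1$ when $n=m$ (where $\calb(m,m)=\calr^*_0(m)$ has dimension $4m$).

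By Proposition \ref{degdata}, the kernels of the tautological surjection on $\PP^3\times\calh(m,n)$ form a flat family of torsion-free instanton sheaves of charge $n$, which are stable by Corollary \ref{rk2-mu-stable} (for $n>m$) or by Lemma \ref{stable1} (for $n=m$). This induces a moduli morphism $\Phi:\calh(m,n)\to\calm(n)$ with set-theoretic image $\cald(m,n)$. For $[F]\in\cald(m,n)$, the pair $([E],\Gamma)$ is uniquely recovered by $E=F^{\dual\dual}$ and $\Gamma=\mathrm{Supp}(F^{\dual\dual}/F)$, while $\varphi$ is determined up to the induced action of $\mathrm{Aut}(E)/\CC^*$ on $\PP(\Hom(E,(\iota_*L)(2)))$. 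When $n>m$, $E$ is a simple stable bundle so $\mathrm{Aut}(E)=\CC^*$ acts trivially on the projectivization, the fibers of $\Phi$ are reduced points, and
$$ \dim\cald(m,n)=(8n-4m-3)+(4m-1)=8n-4. $$
When $n=m$, $E=\op3^{\oplus 2}$ and $\mathrm{PGL}_2=\mathrm{Aut}(E)/\CC^*$ acts on the fibers with generically trivial stabilizer, giving $3$-dimensional generic fibers and $\dim\cald(m,m)=(8m-1)-3=8n-4$.

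Finally, $\overline{\cald(m,n)}$ is irreducible as the closure of the irreducible image of $\Phi$ and projective as a closed subscheme of $\calm(n)$. To verify openness of $\cald(m,n)$ in $\overline{\cald(m,n)}$, I would identify it with $\overline{\cald(m,n)}\cap U$ for the open subset $U\subset\calm(n)$ defined by three open conditions on a stable sheaf $F$: (a) $F$ is an instanton sheaf (open by semi-continuity of the cohomology vanishings); (b) the singular locus of $F$ -- automatically pure $1$-dimensional and equal to $\mathrm{Supp}(F^{\dual\dual}/F)$ by Theorem \ref{jg-thm} -- lies in $\calr^*_0(m)$, which is open in the Hilbert scheme of degree-$m$ curves; and (c) $F^{\dual\dual}|_\Gamma$ is trivial (open by semi-continuity of $h^0(F^{\dual\dual}|_\Gamma(-\pt))$ along the universal curve). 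The isomorphism $F^{\dual\dual}/F\simeq(\iota_*L)(2)$ then holds automatically, since $\mathrm{Pic}(\PP^1)=\mathbb{Z}$ forces the line bundle once its degree is fixed by Chern-class computation. The most delicate technical point is justifying the openness of (b) in $\calm(n)$: one must show that $\mathrm{Sing}(F)$ varies in a flat family across $\calm(n)$ so as to define a morphism into the Hilbert scheme, whose preimage of the open subset $\calr^*_0(m)$ is then open. This is handled by a relative Fitting-ideal / Hilbert-scheme construction applied along the non-locally-free stratum of $\calm(n)$, which by Theorem \ref{jg-thm} behaves flatly in $1$-dimensional families.
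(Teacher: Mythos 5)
Your construction of the parameter space and your dimension count follow essentially the same route as the paper: the projectivized relative Hom-bundle over $\calb(m,n)$, its open locus of surjections, and the classifying map to $\calm(n)$ with image $\cald(m,n)$. Indeed, your fiber analysis at $n=m$ (fibers of $\Phi$ are $\mathrm{PGL}_2(\CC)$-orbits of dimension $3$, since $\mathrm{Aut}(\op3^{\oplus2})=GL_2(\CC)$) is \emph{more} careful than the paper's uniform bijectivity claim, and it correctly yields $8n-4$ in that case. However, the step you dispose of in one clause --- ``after passing to an \'etale cover if necessary'' --- is exactly where the technical content of the paper's proof lies. When $n-m$ is even and positive there is no universal family over $\cali(n-m)$, only a quasi-universal one of multiplicity $2$; an honest universal family exists only on an \'etale cover $\boldsymbol{\widetilde U}\to\calb(m,n)$, which need not be irreducible, and the image of a single irreducible component of the bundle over it dominates only a dense open subset of $\calb(m,n)$. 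So your assertion that $\calh(m,n)$ is irreducible, hence that $\cald(m,n)$ is the image of one irreducible variety, does not follow as stated. The paper resolves precisely this point: the gerbe measuring the failure of descent acts by scalars, so the \emph{projectivized} Hom-bundle descends to a Severi--Brauer variety over the irreducible base $\calb(m,n)$. Your version is repairable (the images of the several components all have the same closure), but the repair, or the descent, must be supplied.

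The openness argument, by contrast, contains a step that is false rather than merely unproved: your set $U$ is not open in $\calm(n)$. As the paper goes on to prove (Theorem \ref{Thm 4.6}), $\cald(m,n)\subset\partial\cali(n)\subset\overline{\cali(n)}$, so \emph{every} Zariski neighborhood in $\calm(n)$ of a point $[F]\in\cald(m,n)$ contains classes of locally free instanton bundles, whose singular locus is $\varnothing$ and which therefore violate your condition (b). Thus the locus cut out by (a), (b), (c) contains no neighborhood of any point of $\cald(m,n)$, and the ``delicate technical point'' you flag cannot be fixed: there is no morphism $[F]\mapsto\mathrm{Sing}(F)$ to the Hilbert scheme on any neighborhood of $\cald(m,n)$, because the singular locus jumps from empty to a degree-$m$ curve exactly along the boundary (Theorem \ref{jg-thm} concerns a single sheaf and asserts nothing about flatness in families; the same objection applies to condition (c), since $F\mapsto F^{\dual\dual}$ does not vary algebraically over $\calm(n)$ either). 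There is also a set-theoretic gap: (a)--(c) do not force $F^{\dual\dual}/F\simeq(\iota_*\mathcal{O}_\Gamma(-\pt))(2)$, since a priori $F^{\dual\dual}/F$ could have rank $2$ on $\Gamma$ (with $c_2(F^{\dual\dual})=n-2m$); your appeal to $\mathrm{Pic}(\PP^1)=\mathbb{Z}$ presupposes that it is a line bundle. A correct proof of the openness clause must work inside $\overline{\cald(m,n)}$ --- for instance, constructibility of $\cald(m,n)$ (which your construction does give, via Chevalley) together with stability under generization within $\overline{\cald(m,n)}$, using semicontinuity of the invariants of $F^{\dual\dual}/F$ along one-parameter specializations --- not openness of these conditions in the full moduli space.
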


\begin{proof}
We will construct a Severi--Brauer variety $\calp=\calp(m,n)$, fibered over $\calb=\calb(m,n)$ with fiber
$\p{4m-1}$, and an open subset $\calz\subset \calp$ together
with a morphism $\calz\to \calm(n)$ mapping $\calz$ bijectively onto $\cald=\cald(m,n)$.

According to \cite[Section 4.6]{HL}, a universal rank-2 sheaf exists locally
in the étale topology over the stable locus $\calm^{\mathrm {st}}$ of $\calm=\calm(n-m)$. Thus there exists an open étale
covering $\Phi:W\to \calm^{\mathrm {st}}$ and a rank 2 sheaf
$\mathbf E=\mathbf E_W$ over $\PP^3\times W$ such that
for any $w\in W$, $\mathbf E|_{\PP^3\times w}\simeq E_t$, 
where $t=\Phi(w)\in\calm^{\mathrm {st}}$ and $E_t$
denotes the stable sheaf whose isomorphism class is represented by $t$.
Refining the étale covering, one can define 
``transition functions'' $g:\pr_{12}^*\mathbf E\isoto
\pr_{13}^*\mathbf E$ over $\PP^3\times W\times_{\calm^{\mathrm {st}} }W$, where
$\pr_{ij\ldots}$ denotes the projection to
the product of the $i$-th, $j$-th\dots\ factors of a product of several factors.
The transition functions, in general, fail to satisfy the cocycle condition, so
$\mathbf E$ does not descend to $\calm^{\mathrm {st}}$. The failure of $g$ to
satisfy the 1-cocycle condition is measured by a gerbe $c_g\in \Gamma(W\times_{\calm^{\mathrm {st}} } W\times_{\calm^{\mathrm {st}} } W,
\calo^*)$, which is an étale 2-cocycle representing a $\nu$-torsion element $\alpha=[c_g]$ of the Brauer group
$\mathrm{Br}\,(\calm^{\mathrm {st}} )\subset H^2_{\mathrm{\acute et}}(\calm^{\mathrm {st}} , \calo^*)$, where 
$\nu=1$ or 2 depending on the parity of $n-m$.
Over $\PP^3\times W\times_{\calm^{\mathrm {st}} } W\times_{\calm^{\mathrm {st}} } W$, the following twisted cocycle condition holds: 
 \begin{equation}\pr^*_{134}g\circ\pr^*_{123}g=\pr_{234}^*c_g\cdot\pr^*_{124}g.
\label{twisted-cocycle}\end{equation}

We now choose an étale open covering $\boldsymbol{\widetilde V}\to\calb$ by
the étale open sets ${\widetilde V}$ constructed in the end of the proof of
Lemma \ref{R*0ME}, and define $\boldsymbol{\widetilde U}=W\times_{\calm^{\mathrm {st}}} \boldsymbol{\widetilde V}$.
 We also denote by $\widetilde{\boldsymbol{\Gamma}}$ the 
pullback of the universal degree-$m$ rational curve in $\PP^3$
with two natural maps $\tilde \pi:\widetilde{\boldsymbol{\Gamma}}\to
\boldsymbol{\widetilde U}$, $\boldsymbol\iota : \widetilde{\boldsymbol{\Gamma}}
\into \p3\times\boldsymbol{\widetilde U}$, 
and by
$\widetilde{\mathbf E}$ the pullback of $\mathbf E$ to $\widetilde{\boldsymbol{\Gamma}}$.
Then $\tilde \pi$ possesses $m$ disjoint cross-sections over $\boldsymbol{\widetilde U}$,
which we denote $x_1,\ldots, x_m$, as before, and we can choose a global line bundle 
$\mathbf L$ on $\widetilde{\boldsymbol{\Gamma}}$ of relative degree $-1$ over
$\boldsymbol{\widetilde U}$ by setting 
$\mathbf L:=\calo_{\widetilde{\boldsymbol{\Gamma}}}(-x_1)$. For an integer $k$, we denote
by $\calo_{\widetilde{\boldsymbol{\Gamma}}}(k)$ the pullback of $\calo_{\p3}(k)$,
and $\mathbf L(k):=\mathbf L\otimes\calo_{\widetilde{\boldsymbol{\Gamma}}}(k)$.

For any $\tilde u\in \boldsymbol{\widetilde U}$, we have 
$\widetilde{\mathbf E}_{\Gamma_{\tilde u}}
\simeq \calo^{\oplus 2}_{\Gamma_{\tilde u}}$,
and $\Hom(\widetilde{\mathbf E}_{\Gamma_{\tilde u}},\mathbf L(2)|_{\Gamma_{\tilde u}})
\simeq H^0(\Gamma_{\tilde u}, \calo^{\oplus 2}_{\Gamma_{\tilde u}}((2m-1)\pt))$,
where
$\Gamma_{\tilde u}:=\tilde\pi^{-1}(\tilde u)$, is a $4m$-dimensional vector space. 
These vector spaces glue into the vector bundle 
$\boldsymbol\tau:=\HOM_{\widetilde{\boldsymbol{\Gamma}}/\boldsymbol{\widetilde U}}(\mathbf E_{\boldsymbol{\widetilde U}},
\boldsymbol\iota_*\mathbf L(2))$
over~$\boldsymbol{\widetilde U}$.
Its associated projective bundle $\boldsymbol{p_{\widetilde U}}:
\mathbf P_{\boldsymbol{\widetilde U}}:=
\PP\boldsymbol\tau\to \boldsymbol{\widetilde U}$ 
descends to $\calb$, that is, there exists
a Severi--Brauer variety $\boldsymbol{p}_\calb:\mathbf P_{\calb}\to\calb$ with fibers $\PP^{4m-1}$
such that $\mathbf P_{\boldsymbol{\widetilde U}}\simeq \boldsymbol{\widetilde U}
\times_{\calb}\mathbf P_{\calb}$. Indeed, as follows from \eqref{twisted-cocycle},
the failure of the transition functions of $\mathbf E_{\boldsymbol{\widetilde U}}$
and of $\boldsymbol\tau$ to form a 1-cocycle for the étale open
covering $\boldsymbol{\widetilde U}\to\calb$ is only in a scalar factor $c_g$,
so the transition functions modulo homotheties do form a 1-cocycle and define
a projective bundle over $\calb$.

The open subset $Z_{\tilde u}$ in 
$\PP^{4m-1}_{\tilde u}:=\PP\Hom(\widetilde{\mathbf E}_{\Gamma_{\tilde u}},\mathbf L(2)|_{\Gamma_{\tilde u}})$
consisting of proportionality classes 
$[\phi]$ of surjective homomorphisms $\phi:\widetilde{\mathbf E}_{\Gamma_{\tilde u}}
\onto \mathbf L(2)|_{\Gamma_{\tilde u}}$ is the complement to the
zero locus $\Delta_{\tilde u}$ of the resultant $\calr(f_0,f_1)$ of the two components of
$\phi$, viewed as a pair of polynomials $(f_0,f_1)$ on
$\Gamma_{\tilde u}\simeq \PP^1$ of degree $2m-1$, $f_i\in
H^0(\Gamma_{\tilde u}, \calo_{\Gamma_{\tilde u}}((2m-1)\pt))$.
It is easy to see that $\calr(f_0,f_1)$ depends on the choice
of homogeneous coordinates on $\Gamma_{\tilde u}\simeq \PP^1$ via the character
$GL_2(\CC)\to\CC^*,\ A\mapsto (\det A)^{(2m-1)^2}$, and is multiplied by $(\det B)^{-4m+2}$ when the identification 
$\widetilde{\mathbf E}_{\Gamma_{\tilde u}}
\simeq \calo^{\oplus 2}_{\Gamma_{\tilde u}}$
is changed by means of a matrix $B\in GL_2(\CC)$.
This implies that the resultant $\calr$ can be viewed as a section of a line bundle
upon lifting to an appropriate $GL_2(\CC)\times GL_2(\CC)$-torsor over
$\mathbf P_{\boldsymbol{\widetilde U}}$, and the zero locus of this section
descends to a relative divisor $\boldsymbol{\Delta}_\calb$ in
$\mathbf P_{\calb}$ over $\calb$.

We have thus shown that the surjectivity loci
$Z_{\tilde u}\subset \PP^{4m-1}_{\tilde u}$ glue
into an open subset $\calz_{\boldsymbol{\widetilde U}}
\subset \mathbf P_{\boldsymbol{\widetilde U}}$ which is the
inverse image of the open subset $\calz_\calb=\mathbf P_{\calb}\setminus
\boldsymbol{\Delta}_\calb$ of~$\mathbf P_{\calb}$. Now we are ready to construct a universal family $\mathbf F$ of the instanton sheaves of charge $n$
over $\calz_{\boldsymbol{\widetilde U}}$.

Remark, that $\boldsymbol{p_{{\widetilde U}*}}(\calo_{\mathbf P_{\boldsymbol{\widetilde U}}/
\boldsymbol{\widetilde U}}(1))=\tau^\dual$ (the equality sign meaning a canonical isomorphism),
so that the canonical Casimir section $\boldsymbol\sigma\in\Gamma(\boldsymbol{\widetilde U},
\boldsymbol\tau^\dual\otimes\boldsymbol\tau)$ comes from a unique section
$\boldsymbol{\tilde\sigma}\in \Gamma(\mathbf P_{\boldsymbol{\widetilde U}},
\boldsymbol{p_{\widetilde U}^*}\tau\otimes
\calo_{\mathbf P_{\boldsymbol{\widetilde U}}/
\boldsymbol{\widetilde U}}(1))$. The latter defines a morphism of sheaves
$\boldsymbol{\phi}:\boldsymbol{p_{\widetilde U}^*}\mathbf E_{\boldsymbol{\widetilde U}}
\to\boldsymbol{p_{\widetilde U}^*}\boldsymbol\iota_*\mathbf L(2)$, and we set
$\mathbf F:=\ker \boldsymbol{\phi}|_{\calz_{\boldsymbol{\widetilde U}}}$. We have the following exact triple:
$$
0\lra \mathbf F \lra \boldsymbol{p_{\widetilde U}^*}\mathbf E_{\boldsymbol{\widetilde U}}
|_{\calz_{\boldsymbol{\widetilde U}}}
\xrightarrow{\boldsymbol{\phi}|_{\calz_{\boldsymbol{\widetilde U}}}} 
\boldsymbol{p_{\widetilde U}^*}(\boldsymbol\iota_*\mathbf L(2))\otimes
\calo_{\mathbf P_{\boldsymbol{\widetilde U}}/
\boldsymbol{\widetilde U}}(1)
|_{\calz_{\boldsymbol{\widetilde U}}} \lra 0\ .
$$
By construction, the classifying map ${\calz_{\boldsymbol{\widetilde U}}}\to\calm(n)$ of $\mathbf F$ factors through a bijection $\calb\to\cald(m,n)$, and we are done.
\end{proof}

Now we will prove the smoothness of $\calm(n)$ along $\mathcal{D}(m,n)$.

\begin{proposition}\label{prop 4.1}
Let $[F]\in\cald(m,n)$ with $n\ge1$ and $1\le m\le n$, where $[F]$ denotes,
as before, the point of the moduli space representing the isomorphism class of a stable sheaf. Then $\dim {\rm Ext}^1(F,F)=8n-3$ and
${\rm Ext}^2(F,F)=0$, i.e. $[F]$ is a nonsingular point of $\calm(n)$.
\end{proposition}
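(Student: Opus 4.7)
The plan is to apply Lemma \ref{global exts} to $F$, viewed as the elementary transform of $E := F^{\dual\dual}$ (a rank $2$ locally free instanton of charge $n-m$, by Theorem \ref{jg-thm}(ii), allowing the case $E \simeq \op3^{\oplus 2}$ when $m=n$) along the data $(\Gamma, L, \varphi)$ with $L = \calo_\Gamma(-\pt)$. First I verify the hypothesis of the lemma: $(\iota_*L^2)(4) = \iota_*\calo_\Gamma((4m-2)\pt)$ is a line bundle of degree $4m-2$ on $\Gamma \simeq \p1$, so $h^1((\iota_*L^2)(4)) = 0$ and $h^0((\iota_*L^2)(4)) = 4m-1$. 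Formula \eqref{inhom dim} (with the ``$n$'' there denoting the charge $n-m$ of $E$) then gives
$h^1(\inhom(F,F)) = 8(n-m) + (4m-1) - 3 = 8n - 4m - 4$.

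The core of the proof is to compute $H^0$ and $H^1$ of $\inext^1(F,F)$ via the exact sequence \eqref{inhom sqc 2}. Each of its five terms is supported on $\Gamma$, and I identify them using the standard local Ext formulas for a line bundle $M$ on the lci codimension-$2$ subscheme $\Gamma \subset \p3$, namely $\inext^q(\iota_*M, \op3) = \iota_*(M^{\dual} \otimes \det N_{\Gamma/\p3})$ for $q=2$ (and $0$ otherwise) and $\inext^q(\iota_*M, \iota_*N) = \iota_*(M^{\dual} \otimes N \otimes \Lambda^q N_{\Gamma/\p3})$, combined with the crucial fact $E|_\Gamma \simeq \calo_\Gamma^{\oplus 2}$ from the definition of $\calr^*_{0}(m)_E$. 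This yields $\inext^1(F, \op3) \otimes E \simeq \iota_*\calo_\Gamma((2m-1)\pt)^{\oplus 2}$, $\inext^1(F, (\iota_*L)(2)) \simeq \iota_*\det N_{\Gamma/\p3}$ of degree $4m-2$, and an expression for $\inhom(F, (\iota_*L)(2))$ as a four-term exact sequence whose other constituents are $\iota_*\calo_\Gamma$, $\iota_*\calo_\Gamma((2m-1)\pt)^{\oplus 2}$, and $\iota_*N_{\Gamma/\p3}$.

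Since every line bundle appearing on $\Gamma \simeq \p1$ in these expressions has nonnegative degree, all higher cohomology vanishes. Splitting \eqref{inhom sqc 2} into short exact sequences and running long exact cohomology sequences gives $H^1(\inext^1(F,F)) = 0$, whence $\Ext^2(F,F) = 0$ by Lemma \ref{global exts}(ii). A routine Euler characteristic tally in \eqref{inhom sqc 2} then yields $\chi(\inext^1(F,F)) = 4m+1$, hence $h^0(\inext^1(F,F)) = 4m+1$, and Lemma \ref{global exts}(i) concludes $\dim \Ext^1(F,F) = (4m+1) + (8n-4m-4) = 8n-3$.

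The main technical obstacle is the correct identification of the terms and maps in \eqref{inhom sqc 2}. Particular care is needed in the unbalanced case $m=2$, where $N_{\Gamma/\p3} \simeq \calo_\Gamma(2\pt) \oplus \calo_\Gamma(4\pt)$; however, only the total degree $4m-2$ and the nonnegativity of each summand enter the cohomology vanishings and the Euler characteristic, so the argument goes through uniformly in $m$. The boundary case $m=n$, where $E = \op3^{\oplus 2}$ is trivial, is likewise covered since formula \eqref{inhom dim} was established uniformly in that case.
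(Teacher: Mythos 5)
Your proposal is correct and takes essentially the same route as the paper: both apply Lemma \ref{global exts} together with the exact sequence \eqref{inhom sqc 2}, identify its terms via the local Ext formulas for the lci curve $\Gamma$ and the triviality of $E|_\Gamma$, and conclude $h^1(\inext^1(F,F))=0$, $h^0(\inext^1(F,F))=4m+1$, hence $\Ext^2(F,F)=0$ and $\dim\Ext^1(F,F)=8n-3$. The only differences are organizational: you read off the cohomology of $\inhom(F,(\iota_*L)(2))$ from the four-term sequence obtained by applying $\inhom(-,(\iota_*L)(2))$ to \eqref{ratcurve-ses} instead of computing $F|_\Gamma$ explicitly as the paper does, and you treat $m=2$ versus $m\ne 2$ (and $n=m$ versus $n>m$) uniformly through total degrees and an Euler-characteristic count, where the paper runs the cases separately with identical numerical outcome.
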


\begin{proof}

Recall that, if $\iota:\Gamma\hookrightarrow\p3$ is a smooth rational curve of degree $m$ belonging to $R^*_0(m)$, then by definition
\begin{equation}\label{nbdl m}
N_{\Gamma/\p3}\simeq
\left\{\begin{array}{lll}
\mathcal{O}_\Gamma((2m-1)\pt )^{\oplus 2} & \mbox{if} & m\ne2\\
\mathcal{O}_\Gamma(2\pt )\oplus\mathcal{O}_\Gamma(4\pt )& \mbox{if} & m=2.
\end{array}\right.
\end{equation}
Thus
\begin{equation}\label{det nbdl}
\det N_{\Gamma/\p3}\simeq \mathcal{O}_\Gamma((4m-2)\pt )\ \mbox{for all} \ m\ge1.
\end{equation}

Moreover,
\begin{equation}\label{inexts1}
\inext^1(\iota_*\mathcal{O}_\Gamma,\iota_*\mathcal{O}_\Gamma) \simeq \iota_*N_{\Gamma/\p3} ~~,~~
\inext^2(\iota_*\mathcal{O}_\Gamma,\iota_*\mathcal{O}_\Gamma) \simeq \iota_*\det N_{\Gamma/\p3} ,
\end{equation}
\begin{equation}\label{inexts2}
\inext^1(\iota_*\mathcal{O}_\Gamma,\op3) = 0 ~~{\rm and}~~
\inext^2(\iota_*\mathcal{O}_\Gamma,\op3) \simeq \iota_*\det N_{\Gamma/\p3} .
\end{equation}

We have the short exact sequence
\begin{equation}\label{ratcurve-ses}
0 \to F \to F^{\dual\dual} \to (\iota_*L)(2) \to 0 ,
\end{equation}
where $L:=\mathcal{O}_\Gamma(-\pt )$ and $F^{\dual\dual}$ is a rank $2$ locally free instanton sheaf of charge $n-m$. It follows that $\ext^2(F^{\dual\dual},F^{\dual\dual})=0$ (see \cite{JV}) and
$$ H^1(\p3,(\iota_*L^2)(4)) \simeq H^1(\Gamma, \mathcal{O}_\Gamma((4m-2)\pt )) = 0, $$
thus we can apply Lemma \ref{global exts}.

We start by computing $h^1(\inhom(F,F))$. Since
$$ h^0(\p3,(\iota_*L^2)(4)) = h^0(\Gamma,\mathcal{O}_\Gamma((4m-2)\pt )) = 4m-1, $$
we obtain from formula (\ref{inhom dim}) that either
\begin{equation}\label{dims1}
h^1(\inhom(F,F)) = 8n-4m-4, ~~{\rm if}~~ n>m,
\end{equation}
i.e. if $F^{\dual\dual}$ is nontrivial, or
\begin{equation}\label{dims2}
h^1(\inhom(F,F)) = 4m-4, ~~{\rm if}~~ n=m,
\end{equation}
i.e. if $F^{\dual\dual}$ is trivial.

Next, we characterize the sheaf $\inext^1(F,F)$. To do this, we first apply the contravariant functor
$\inhom(-,F^{\dual\dual})$ to the sequence (\ref{ratcurve-ses}). Since $\inhom(\iota_*L,F^{\dual\dual})=0$
(the first sheaf is torsion, while the second is locally free), one concludes that
$\inhom(F^{\dual\dual},F^{\dual\dual})\simto\inhom(F,F^{\dual\dual})$
and
$$ \inext^1((\iota_*L)(2),F^{\dual\dual})\simeq\inext^1(\iota_*\mathcal{O}_\Gamma,\op3)\otimes
(F^{\dual\dual}\otimes(\iota_*L^{-1}))(-2)=0, $$
by (\ref{inexts2}).

Notice that $\inhom(F,(\iota_*L)(2))\simeq\iota_*\inhom(\iota^*F,\mathcal{O}_\Gamma((2m-1)\pt ))$.
Applying the functor $\iota^*(-\otimes\iota_*\mathcal{O}_\Gamma)$ to the sequence (\ref{ratcurve-ses}) one obtains the following exact sequence of sheaves on $\Gamma$:
\begin{equation}\label{sqc2}
0 \to \iota^*\intor_1((\iota_*L)(2),\iota_*\mathcal{O}_\Gamma) \to F|_{\Gamma} \to F^{\dual\dual}|_{\Gamma} \to
\mathcal{O}_\Gamma((2m-1)\pt ) \to 0.
\end{equation}
Breaking it into two short exact sequences, one obtains, since
$F^{\dual\dual}|_{\Gamma}\simeq\mathcal{O}_\Gamma^{\oplus 2}$:
\begin{equation}\label{sqc3}
0\to\iota^*\intor_1((\iota_*L)(2),\iota_*\mathcal{O}_\Gamma) \to F|_{\Gamma} \to G \to 0
\end{equation}
and
\begin{equation}\label{sqc4}
0 \to G \to \mathcal{O}_\Gamma^{\oplus 2} \to \mathcal{O}_\Gamma((2m-1)\pt ) \to 0 .
\end{equation}
One concludes from sequence (\ref{sqc4}) that $G\simeq\mathcal{O}_\Gamma(-(2m-1)\pt )$. Moreover, since by (\ref{nbdl m})
$$ \iota^*\intor_1((\iota_*L)(2),\iota_*\mathcal{O}_\Gamma)\simeq N_{\Gamma/\p3}^\dual\otimes\mathcal{O}_\Gamma((2m-1)\pt )
\simeq\mathcal{O}_\Gamma^{\oplus 2}\ \mbox{for} \ m\ne2, $$
and
$$ \iota^*\intor_1((\iota_*L)(2),\iota_*\mathcal{O}_\Gamma)\simeq N_{\Gamma/\p3}^\dual\otimes\mathcal{O}_\Gamma(3\pt )
\simeq\mathcal{O}_\Gamma(\pt)\oplus\mathcal{O}_\Gamma(-\pt )\ \mbox{for} \ m=2, $$
it follows from sequence (\ref{sqc3}) that
$$
F|_{\Gamma} \simeq
\mathcal{O}_\Gamma^{\oplus 2} \oplus \mathcal{O}_\Gamma(-(2m-1)\pt )\ \mbox{for} \ m\ne2,
$$
$$
F|_{\Gamma} \simeq \mathcal{O}_\Gamma(\pt ) \oplus \mathcal{O}_\Gamma(-\pt ) \oplus \mathcal{O}_\Gamma(-3\pt )\ \mbox{for} \ m=2.
$$
We then conclude that
\begin{equation}\label{id1}
\inhom(F,(\iota_*L)(2))\simeq\iota_*\mathcal{O}_\Gamma((2m-1)\pt )^{\oplus 2}\oplus
\iota_*\mathcal{O}_\Gamma((4m-2)\pt )\ \mbox{for} \ m\ne2,
\end{equation}
\begin{equation}\label{id1m=2}
\inhom(F,(\iota_*L)(2))\simeq\iota_*\mathcal{O}_\Gamma(2\pt )\oplus
\iota_*\mathcal{O}_\Gamma(4\pt ) \oplus\iota_*\mathcal{O}_\Gamma(6\pt )\ \mbox{for} \ m=2.
\end{equation}

Now, by applying the functor $\inhom(-,F^{\dual\dual})$ to (\ref{ratcurve-ses}), one
concludes that $\inext^1(F,F^{\dual\dual})\simeq\inext^2((\iota_*L)(2),F^{\dual\dual})$. However,
\begin{multline*}
\inext^2((\iota_*L)(2),F^{\dual\dual})  \simeq
\inext^2(\iota_*\mathcal{O}_\Gamma(-\pt ),\op3)\otimes F^{\dual\dual}(-2) \simeq \notag \\
 \inext^2(\iota_*\mathcal{O}_\Gamma,\op3) \otimes \iota_*\mathcal{O}_\Gamma(\pt ) \otimes F^{\dual\dual}(-2) . 
\end{multline*}
Thus, by (\ref{det nbdl}), (\ref{inexts2}) and since $F^{\dual\dual}|_{C}\simeq\mathcal{O}_\Gamma^{\oplus 2}$, we obtain
\begin{equation}\label{id2}
\inext^1(F,F^{\dual\dual}) \simeq \iota_*\mathcal{O}_\Gamma((2m-1)\pt )^{\oplus 2}.
\end{equation}

Finally, we apply the functor $\inhom(-,(\iota_*L)(2))$ to (\ref{ratcurve-ses}), using (\ref{inexts1})
and the isomorphism
$\inext^2((\iota_*L)(2),(\iota_*L)(2)) \simeq \inext^2(\iota_*\mathcal{O}_\Gamma,\iota_*\mathcal{O}_\Gamma)$,
and we see that
\begin{equation}\label{id30}
\inext^1(F,(\iota_*L)(2))\simeq\inext^2((\iota_*L)(2),(\iota_*L)(2))\simeq\iota_*\det N_{\Gamma/\p3}.
\end{equation}
Thus, in view of (\ref{det nbdl}), we have
\begin{equation}\label{id3}
\inext^1(F,(\iota_*L)(2)) \simeq \iota_*\mathcal{O}_\Gamma((4m-2)\pt ).
\end{equation}

Substituting (\ref{id1})-(\ref{id2}) and (\ref{id3}) into (\ref{inhom sqc 2}), we obtain
the exact sequence
\begin{multline*}
0 \to \iota_*\mathcal{O}_\Gamma((4m-2)\pt )  \to
\iota_*\mathcal{O}_\Gamma((2m-1)\pt )^{\oplus 2}\oplus\iota_*\mathcal{O}_\Gamma((4m-2)\pt ) \to
\\ \inext^1(F,F) \to  
 \iota_*\mathcal{O}_\Gamma((2m-1)\pt )^{\oplus 2} \to \iota_*\mathcal{O}_\Gamma((4m-2)\pt ) \to 0\ \mbox{for} \ m\ne2, 
\end{multline*}
and
\begin{multline*}
0 \to \iota_*\mathcal{O}_\Gamma(6\pt )  \to
\iota_*\mathcal{O}_\Gamma(2\pt )\oplus\iota_*\mathcal{O}_\Gamma(4\pt )\oplus
\iota_*\mathcal{O}_\Gamma(6\pt ) \to
\inext^1(F,F) \to \\ 
 \iota_*\mathcal{O}_\Gamma(3\pt )^{\oplus 2} \to \iota_*\mathcal{O}_\Gamma(6\pt ) \to 0\ \mbox{for} \ m=2. 
\end{multline*}

Breaking these into three short exact sequences, one obtains:
\begin{equation}\label{i}
0\to G_1 \to \iota_*\mathcal{O}_\Gamma((2m-1)\pt )^{\oplus 2} \to
\iota_*\mathcal{O}_\Gamma((4m-2)\pt ) \to 0\ \mbox{for} \ m\ge1,
\end{equation}
\begin{equation}\label{ii}
0\to G_2 \to \inext^1(F,F) \to G_1 \to 0,
\end{equation}
\begin{multline}\label{iii}
0\to \iota_*\mathcal{O}_\Gamma((4m-2)\pt ) \to \\ 
\iota_*\mathcal{O}_\Gamma((2m-1)\pt )^{\oplus 2}\oplus\iota_*\mathcal{O}_\Gamma((4m-2)\pt )
\to G_2 \to 0\ \mbox{for}\ m\ne2,
\end{multline}
and
\begin{equation}\label{iiim=2}
0 \to \iota_*\mathcal{O}_\Gamma(6\pt ) \to \iota_*\mathcal{O}_\Gamma(2\pt )\oplus
\iota_*\mathcal{O}_\Gamma(4\pt )\oplus\iota_*\mathcal{O}_\Gamma(6\pt ) \to G_2 \to 0\ \mbox{for} \ m=2.
\end{equation}
From (\ref{i}) one concludes that $G_1\simeq \iota_*\mathcal{O}_\Gamma$, while (\ref{iii}) and
(\ref{iiim=2}) impliy that $G_2\simeq \iota_*\mathcal{O}_\Gamma((2m-1)\pt )^{\oplus 2}$ for $m\ne2$ and $G_2\simeq \iota_*\mathcal{O}_\Gamma(2\pt )\oplus\iota_*\mathcal{O}_\Gamma(4\pt )$ for $m=2$.
Then (\ref{ii}) yields
$$
\inext^1(F,F) \simeq \iota_*\mathcal{O}_\Gamma((2m-1)\pt )^{\oplus 2}\oplus\iota_*\mathcal{O}_\Gamma,
\ \ \ \ \ m\ne2,
$$
$$
\inext^1(F,F) \simeq \iota_*\mathcal{O}_\Gamma(2\pt )\oplus\iota_*\mathcal{O}_\Gamma(4\pt )
\oplus\iota_*\mathcal{O}_\Gamma,\ \ \ \ \ m=2.
$$

It then follows that
$$
h^0(\inext^1(F,F)) = 4m+1 ~~~~ {\rm and} ~~~~ h^1(\inext^1(F,F)) = 0.
$$
We conclude from Lemma \ref{global exts} that $\ext^2(F,F)=0$ and
$$
\dim\ext^1(F,F) = 4m+1 + 8n-4m-4 = 8n-3 ~~{\rm if}~~ n>m ,
$$
$$
\dim\ext^1(F,F) = 4m+1 + 4m-4 = 8n-3 ~~{\rm if}~~ n=m ,
$$
as desired.
\end{proof}

\bigskip

\section{Sheaves in $\cald(m,n)$ as limits of instantons}\label{lim-inst}

The goal of this section is to prove that the varieties $\cald(m,n)$ are contained in the instanton boundary $\partial\cali(n)$. We begin with the following technical lemma.

\begin{lemma}\label{F,G}
Let $C $ be a smooth irreducible curve with a marked point $0$, and set $\mathbf{B}:=C \times\p3$. Let $\mathbf{F}$ and $\mathbf{G}$ be $\mathcal{O}_{\mathbf{B}}$-sheaves, flat over $C $ and such that $\mathbf{F}$ is locally free along $\mathrm{Supp}(\mathbf{G})$. Denote $\p3_t=\{t\}\times\p3$, and
$$
G_t=\mathbf{G}|_{\{t\}\times\p3},\ \ \ F_t=\mathbf{F}|_{\{t\}\times\p3}\ \ {\rm for}\ t\in C . $$ Assume that, for each $t\in C $,
\begin{equation}\label{vanish Hi}
H^i(\inhom(F_t,G_t))=0,\ \ \ i\ge1.
\end{equation}
Assume that $s:F_0\to G_0$ is an epimorphism. Then, after possibly shrinking $C $,
$s$~extends to an epimorhism $\mathbf{s}:\mathbf{F}\twoheadrightarrow\mathbf{G}$.
\end{lemma}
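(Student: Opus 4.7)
The plan is to build $\mathbf{s}$ in two stages: first extend the section $s$ to a morphism $\mathbf{F}\to\mathbf{G}$ over $\mathbf{B}$ by a cohomology-and-base-change argument applied to the relative $\HOM$-sheaf, then deduce surjectivity from properness together with Nakayama's lemma. Set $\calh:=\inhom(\mathbf{F},\mathbf{G})$ and let $\pi:\mathbf{B}\to C$ denote the projection. Since $\mathbf{F}$ is locally free on a Zariski neighborhood $U$ of $\mathrm{Supp}(\mathbf{G})$, on $U$ we have $\calh\simeq\mathbf{F}^{\dual}\otimes\mathbf{G}$, while $\calh=0$ off $U$. Because $\mathbf{F}^{\dual}$ is locally free on $U$ and $\mathbf{G}$ is flat over $C$, the sheaf $\calh$ is itself $\mathcal{O}_{C}$-flat, and the same local description yields the base-change identification $\calh|_{\p3_t}\simeq\inhom(F_t,G_t)$ for every $t\in C$.

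Next, I would apply cohomology and base change to $\pi$. The vanishing hypothesis \eqref{vanish Hi} then reads $H^i(\p3_t,\calh|_{\p3_t})=0$ for all $i\geq 1$ and all $t\in C$. Starting from the obvious vanishing $R^i\pi_*\calh=0$ for $i>3$ (fibers have dimension $3$) and descending in $i$, the base-change criterion produces $R^i\pi_*\calh=0$ for all $i\geq 1$ and shows that $\pi_*\calh$ is a locally free $\calo_{C}$-module whose fiber at $t$ is canonically $\Hom(F_t,G_t)$. In particular, the element $s\in\Hom(F_0,G_0)$ lifts, after possibly shrinking $C$ to a Zariski neighborhood of $0$, to a global section $\mathbf{s}\in H^0(C,\pi_*\calh)=\Hom_{\mathbf{B}}(\mathbf{F},\mathbf{G})$ whose restriction to $\p3_0$ is $s$.

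It remains to pass from a morphism to an epimorphism. Set $Q:=\coker\mathbf{s}$. Right-exactness of restriction to the fiber gives $Q|_{\p3_0}\simeq\coker(s)=0$, and Nakayama's lemma then yields $\mathrm{Supp}(Q)\cap\p3_0=\varnothing$. Since $\p3$ is projective, $\pi$ is proper, so $\pi(\mathrm{Supp}(Q))$ is closed in $C$ and does not contain $0$; replacing $C$ by the complement of this closed subset makes $\mathbf{s}$ surjective, as required. The main subtle point is the base-change identification $\calh|_{\p3_t}\simeq\inhom(F_t,G_t)$, which could fail for non-locally-free $\mathbf{F}$: the hypothesis that $\mathbf{F}$ is locally free along $\mathrm{Supp}(\mathbf{G})$ is exactly what makes both this identification and the $C$-flatness of $\calh$ hold, allowing cohomology and base change to be applied cleanly.
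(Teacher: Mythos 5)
Your proposal is correct and takes essentially the same approach as the paper: both arguments establish that $\mathbf{H}:=\inhom(\mathbf{F},\mathbf{G})$ is flat over $C$ and satisfies $\mathbf{H}|_{\p3_t}\simeq\inhom(F_t,G_t)$ (you obtain this from the local identification $\mathbf{H}\simeq\mathbf{F}^{\dual}\otimes\mathbf{G}$ near $\mathrm{Supp}(\mathbf{G})$, the paper from $\inext^1(\mathbf{F},\mathbf{G})=0$ and the multiplication-by-local-parameter sequence), and then both combine the fiberwise vanishing $H^i(\inhom(F_t,G_t))=0$ for $i\ge 1$ with cohomology and base change to extend $s$ over a shrunken $C$. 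Your concluding step---shrinking $C$ once more, using Nakayama plus properness of the projection to force surjectivity---is exactly the paper's (largely implicit) final remark, so the two proofs match in substance.
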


\begin{proof}
The condition that $\mathbf{F}$ is locally free along $\mathrm{Supp}(\mathbf{G})$ implies that
$\inext^1(\mathbf{F},\mathbf{G})=0$. Thus applying the functor $\inhom(\mathbf{F},-)$
to the short exact sequence
$$ 0\to\mathbf{G}\overset{\cdot\zeta_t}\to\mathbf{G}\to G_t\to0,\ t\in C , $$
where $\zeta_t$ is the local parameter of $\calo_{C ,t}$,
we obtain the short exact sequence
\begin{equation}\label{trA}
0 \to \mathbf{H}\overset{\cdot\zeta_t} \to \mathbf{H} \to \inhom(\mathbf{F},G_t) \to 0,\ \ \ t\in C ,
\end{equation}
in which $\mathbf{H}$ denotes the sheaf $\inhom(\mathbf{F},\mathbf{G})$. 
Since $\mathbf{F}$ is locally free along $\mathrm{Supp}(G_t)\subset\mathrm{Supp}(\mathbf{G})$, we obtain that
\begin{equation}\label{2 isoms}
\inhom(\mathbf{F},G_t)\simeq\inhom(F_t,G_t)\simeq\mathbf{H}|_{\p3_t},\ \ \ t\in C .
\end{equation}
This means that (\ref{trA}) can be rewritten as the short exact sequence
\begin{equation}\label{trB}
0\to\mathbf{H}\overset{\cdot\zeta_t}\to\mathbf{H}\to\mathbf{H}|_{\p3_t}\to0
\end{equation}
obtained by applying the functor $-\otimes\mathbf{H}$ to the short exact sequence
$$ 0 \to \mathcal{O}_{\mathbf{B}} \overset{\cdot\zeta_t} \to \mathcal{O}_{\mathbf{B}} \to
\mathcal{O}_{\mathbb{P}^3_t} \to 0. $$
This yields $\intor_1(\mathbf{H},\mathcal{O}_{\p3_t})=0$ for any $t\in C $.

The last equality means that the $\mathbf{H}$ is flat over $C $. Furthermore, the condition (\ref{vanish Hi}) in view of (\ref{2 isoms}) can be rewritten as $H^i(\inhom(F_t,G_t))=0,\ \ \ i\ge1.$
Thus, by Base Change, we obtain that $R^jp_*\mathbf{H}=0$ for $j\ge1$, where $p:\mathbf{B}\to C $ is the projection. As a corollary, applying $R^jp_*$ to (\ref{trB}) for $t=0$ and using (\ref{2 isoms}), we obtain 
$$ 0\to p_*\mathbf{H}\to p_*\mathbf{H}\overset{e}\to p_*\inhom(F_0,G_0) \to 0. $$
Since $ p_*\inhom(F_0,G_0)$ is supported at $0$, it follows that, after possibly shrinking $C $, the homomorphism of sections
$$ h^0(e):H^0(p_*\mathbf{H})\to H^0(p_*\inhom(F_0,G_0)) $$
is surjective. However, this epimorphism canonically coincides with the epimorphism
$\Hom(\mathbf{F},\mathbf{G}) \twoheadrightarrow \Hom(F_0,G_0)$. Hence, the homomorphism
$s\in{Hom}(F_0,G_0)$, from the hypothesis of the lemma extends to a morphism
$\mathbf{s}\in\mathrm{Hom}(\mathbf{F},\mathbf{G})$. In addition, as $s$ is an epimorphism, again, shrinking $C $ if necessary, we may assume that $\mathbf{s}$ is an epimorphism as well.
\end{proof}

\begin{proposition}\label{Hooft}
$\overline{\cald(1,n)}\subset\partial\cali(n)$ for $n\ge1$.
\end{proposition}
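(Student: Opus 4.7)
Plan. Given $[F]\in\cald(1,n)$ with defining short exact sequence $0\to F\to E\to G_0\to 0$, where $G_0=\iota_*\mathcal{O}_\ell(1)$ and $E\in\cali(n-1)$, the aim is to construct a flat $1$-parameter family $\mathbf F$ on $C\times\p3$ over a smooth pointed curve $(C,0)$ with $\mathbf F_0\simeq F$ and $\mathbf F_t$ a locally free instanton bundle of charge $n$ for $t\neq 0$. The existence of such a smoothing implies $[F]\in\overline{\cali(n)}$. By Propositions \ref{variety D} and \ref{prop 4.1}, $\cald(1,n)$ is irreducible of dimension $8n-4$ and every $[F]\in\cald(1,n)$ is a smooth point of $\calm(n)$ lying on a unique irreducible component of dimension $8n-3$. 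Since $\overline{\cali(n)}$ is itself an irreducible component of $\calm(n)$ of the same dimension and non-local-freeness is a closed condition on $\calm(n)$ (so $\overline{\cald(1,n)}\cap\cali(n)=\emptyset$), it is enough to exhibit a single $[F]\in\cald(1,n)$ admitting such a smoothing.

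The smoothing I would construct is of 't Hooft type: choose $E$ to be a 't Hooft instanton bundle of charge $n-1$ associated with $n$ disjoint lines $m_1,\dots,m_n$ chosen disjoint from $\ell$, so that $\Lambda:=m_1\cup\cdots\cup m_n\cup\ell$ is a union of $n+1$ mutually disjoint lines in $\p3$. The Serre construction then yields a locally free 't Hooft instanton $V\in\cali(n)$ fitting in $0\to\op3(-1)\to V\to I_\Lambda(1)\to 0$. By Lemma \ref{res-trivial}, $V|_\ell\simeq\mathcal{O}_\ell^{\oplus 2}$ and $E|_\ell\simeq\mathcal{O}_\ell^{\oplus 2}$, which provides the natural surjections used in the elementary-transform data. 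I would then interpolate between $V$ (at generic $t$) and $F$ (at $t=0$) by degenerating the extension class defining $V$ in $\Ext^1(I_\Lambda(1),\op3(-1))$, so that at $t=0$ the middle term of the limiting extension acquires precisely the elementary-transform presentation $0\to F\to E\to G_0\to 0$. Lemma \ref{F,G} is the technical engine used to lift the surjection $\varphi_0\colon E\twoheadrightarrow G_0$ (and the auxiliary surjections arising in the limiting presentation) to epimorphisms of families over $C\times\p3$, which guarantees that the kernel family $\mathbf F$ is flat over $C$ with the prescribed fibers.

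The principal obstacle is to identify the limit of the constructed family at $t=0$ with $F$ on the nose, rather than with some other non-locally-free rank $2$ sheaf sharing its double dual $E$ and quotient $G_0$. This identification rests on the compatibility of the Serre correspondence for $V$ with the elementary-transform description of $F$, ensuring that the limiting reflexivization is exactly $E$ and the limiting quotient is exactly $G_0$; together with the simplicity of nontrivial rank $2$ instanton sheaves (Lemma \ref{simple h0}), which rigidifies the extension class in $\Ext^1(G_0,E)$ and pins down the limit up to isomorphism. A secondary subtlety is verifying the cohomological vanishing hypotheses of Lemma \ref{F,G} for the families involved, which reduces to cohomological properties of $E$, $V$, and the sheaves $\iota_*\mathcal{O}_\ell(k)$ on nearby lines.
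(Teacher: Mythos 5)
Your proposal follows essentially the same route as the paper's proof of Proposition \ref{Hooft}: reduce, via the irreducibility of $\overline{\cald(1,n)}$ (Proposition \ref{variety D}), the smoothness of $\calm(n)$ along $\cald(1,n)$ (Proposition \ref{prop 4.1}), and the closedness of the non-locally-free locus, to exhibiting a \emph{single} point of $\cald(1,n)$ lying in $\overline{\cali(n)}$, and then produce that point by degenerating the extension class of a 't~Hooft bundle built on $n+1$ pairwise disjoint lines. That is exactly the paper's argument, with the two steps performed in the opposite order.

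However, two of your technical attributions would not survive execution as written. The interpolating family cannot be obtained from Lemma \ref{F,G}: that lemma presupposes an already given flat family $\mathbf{F}$ and extends an epimorphism $F_0\twoheadrightarrow G_0$ across the family, so feeding it the constant family $\mathcal{O}_C\boxtimes E$ and the constant quotient $\iota_*\mathcal{O}_\ell(1)$ yields a family of kernels all of which are elementary transforms singular along $\ell$ --- the generic fiber is never locally free, so no smoothing arises this way (Lemma \ref{F,G} is the engine of the induction step in Propositions \ref{C-def}--\ref{Dmn in In}, not of this base case). What the paper actually uses is the universal flat family of extensions over $\Ext^1(I_{\Lambda,\p3}(1),\op3(-1))$ from \cite[Proposition 3.1]{HH1}, combined with the computation $\Ext^1(I_{\Lambda,\p3}(1),\op3(-1))\simeq\bigoplus_{i=0}^{n}H^0(\mathcal{O}_{\ell_i})$; restricting to the line $\{(t,1,\dots,1)\}$ gives flatness for free, and since the extension sheaf with class $(t_0,\dots,t_n)$ is locally free along $\ell_i$ exactly when $t_i\neq0$, the member at $t=0$ is visibly the elementary transform, along $\ell_0$, of the charge-$(n-1)$ 't~Hooft bundle on the remaining lines. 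Relatedly, Lemma \ref{res-trivial} asserts triviality of restrictions only on \emph{generic} rational curves, so it cannot be quoted verbatim for the specific lines of your configuration; the triviality needed for membership in $\cald(1,n)$ (namely $E_0^{\dual\dual}|_{\ell_0}\simeq\mathcal{O}_{\ell_0}^{\oplus2}$) requires either a direct argument or a generic choice of the data. Finally, your ``principal obstacle'' dissolves once your own reduction is in place: you do not need the limit to be the originally given $F$ on the nose, only \emph{some} point of $\cald(1,n)$, and this the paper reads off directly from the snake lemma applied to the two Serre-type extension sequences, with no appeal to simplicity or rigidity of extension classes.
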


\begin{proof}
Fix a disjoint union of $n+1$ lines $\Lambda=\underset{0\le i\le n}\bigsqcup \ell_i$ in $\p3$. The extensions of $\op3$-sheaves of the form
\begin{equation}\label{trC}
0 \to \op3(-1) \to E \to I_{\Lambda,\p3}(1) \to 0
\end{equation}
are classified by the vector space $V=\mathrm{Ext}^1(I_{\Lambda,\p3}(1),\op3(-1))$. 
It is easy to see that
any nontrivial
extension \eqref{trC}
defines an instanton
sheaf $E$; when $E$ is
locally free, it is
called a {\it 't Hooft instanton}.
A standard computation gives an isomorphism
\begin{equation}\label{1 isom}
V\simeq\underset{0\le i\le n}\oplus H^0(\mathcal{O}_{\ell_i}).
\end{equation}
Under this isomorphism, any point $x$ in $V$ can be represented by its coordinates
$x=(t_0,...,t_n),\ t_i\in H^0(\mathcal{O}_{\ell_i})=\C$. 
By \cite[Proposition 3.1]{HH1}, there exists a universal (flat) family of extensions (\ref{trC}) parametrized by 
the affine space $V=\mathbb{A}^{n+1}$. Restrict it to the line defined by $\mathbb{A}^1=\{(t,1,...,1)\ |\ t\in\C\}$, and denote by
$\mathbf{E}=\{E_t\}_{t\in\mathbb{A}^1}$ the thus obtained family of instanton sheaves with base $\mathbb{A}^1$. It follows from the construction of the isomorphism (\ref{1 isom}) that $E_t$ is locally free and $[E_t]\in\cali(n)$ for $0\ne t\in\mathbb{A}^1$. For $t=0$, the sheaf $E_0$ is not locally free. By Theorem \ref{jg-thm}, $E_0^{\dual\dual}$ is an instanton bundle, $[E_0^{\dual\dual}]\in\cali(n-1)$. In fact, it fits into the short exact sequence
\begin{equation}\label{trD}
0 \to \op3(-1) \to E_0^{\dual\dual} \to I_{\Lambda_1,\p3}(1) \to 0, \ \ \
\Lambda_1=\underset{1\le i\le n}\bigsqcup \ell_i,
\end{equation}
so it is a 't Hooft instanton.

We thus obtain a morphism $\phi:\mathbb{A}^1 \to \overline{\cali(n)}, \ t\mapsto[E_t]$, and this proves that
$$
[E_0]\in\partial\cali(n).
$$
The exact triples (\ref{trC}) for $E=E_0$ and (\ref{trD}) yield the exact triple
$$ 0 \to E_0\to E_0^{\dual\dual} \to \mathcal{O}_{\ell_0}(\pt ) \to 0, $$
which shows that $[E_0]\in\mathcal{D}(1,n)$. By Proposition \ref{prop 4.1}, $\mathrm{Ext}^2(E_0,E_0)=0$. So $[E_0]$ is a smooth point of $\calm(n)$, in particular,
$[E_0]$ does not lie in the intersection of two components of $\calm(n)$. Hence
$\cald(1,n)$, $\overline{\mathcal{D}(1,n)}$ are entirely contained in the component
$\overline{\mathcal{I}(n)}$. As none of the sheaves in $\cald(1,n)$
is locally free, $\cald(1,n)\subset\overline{\mathcal{D}(1,n)}\subset
\partial\cali(n)$.
\end{proof}

Let now $n\geq m\ge2$. Fix a disjoint union $\Gamma_1=\Gamma'\sqcup\ell_1$ in $\p3$, where $\Gamma'$ is a smooth irreducible rational curve of degree $m-1$ and $\ell_1$ is a line, and let $\iota_1:\Gamma_1\hookrightarrow\p3$ be the embedding. Consider the $\mathcal{O}_{\Gamma_1}$-sheaf
$L_1=\mathcal{O}_{\Gamma'}(-\pt )\oplus\mathcal{O}_{\ell_1}(-\pt )$.
Fix an instanton bundle $E\in\cali(n-m)$ such that
\begin{equation}\label{E restr on}
E|_{\Gamma'}\simeq\mathcal{O}_{\Gamma'}^{\oplus2},\ \ \ \ E|_{\ell_1}\simeq\mathcal{O}_{\ell_1}^{\oplus2};
\end{equation}
the existence of such a bundle $E$ follows from the above. Let us fix an epimorphism
$e_1:E\twoheadrightarrow (\iota_{1*}L_1)(2)$, and let the sheaf $F_1$ be defined by the short exact sequence
\begin{equation}\label{def E1}
0 \to F_1 \to E\xrightarrow{e_1}  (\iota_{1*}L_1)(2) \to 0.
\end{equation}

\begin{proposition}\label{C-def}
Let $n\geq m\ge2$, and assume that $\overline{\cald(m-1,n-1)}\subset\partial\cali(n-1)$. Let $F_1$ be the
$\op3$-sheaf defined by (\ref{def E1}). Then $[F_1]\in\partial\cali(n)$.
\end{proposition}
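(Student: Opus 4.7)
The plan is to realize $F_1$ as an elementary transform along a line of an intermediate sheaf $F'$ lying in $\cald(m-1,n-1)$, deform $F'$ to an instanton bundle of charge $n-1$ using the inductive hypothesis, and lift the elementary transformation to this deformation via Lemma \ref{F,G}; this produces a one-parameter family of elementary transforms of instanton bundles along lines, landing in $\cald(1,n)$, which is already known to be contained in $\partial\cali(n)$ by Proposition \ref{Hooft}.

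Concretely, the disjointness of $\Gamma'$ and $\ell_1$ gives $(\iota_{1*}L_1)(2) = Q' \oplus Q''$ with $Q':=(\iota_{\Gamma'*}\mathcal{O}_{\Gamma'}(-\pt))(2)$ and $Q'':=(\iota_{\ell_1*}\mathcal{O}_{\ell_1}(-\pt))(2)$, and $e_1$ splits as $(e',e'')$. Setting $F':=\ker e'$, a routine diagram chase produces the short exact sequence
\[
 0 \to F_1 \to F' \xrightarrow{e''|_{F'}} Q'' \to 0,
\]
so $F_1$ is itself an elementary transform of $F'$ along $\ell_1$. Proposition \ref{degdata} identifies $F'$ as an instanton sheaf of charge $n-1$ with $F'^{\dual\dual} \simeq E$; assuming (generically) that $\Gamma' \in \calr_0^*(m-1)$, the triviality \eqref{E restr on} gives $[F'] \in \cald(m-1, n-1)$. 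The induction hypothesis $\overline{\cald(m-1,n-1)} \subset \partial\cali(n-1)$ then supplies a smooth pointed curve $(C,0)$ (possibly after étale base change) and a flat family $\mathbf{E}'$ on $C\times\p3$ with $\mathbf{E}'_0 \simeq F'$ and $\mathbf{E}'_t \in \cali(n-1)$ for all $t \in C\setminus\{0\}$.

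Next I apply Lemma \ref{F,G} with $\mathbf{F}:=\mathbf{E}'$ and $\mathbf{G}:=\pr_2^*Q''$ (the constant family supported on $C\times\ell_1$) to extend $e''|_{F'}$ to an epimorphism $\mathbf{s}:\mathbf{E}'\twoheadrightarrow\mathbf{G}$ over $C$. Local freeness of $\mathbf{E}'$ along $C\times\ell_1$ is automatic away from $t=0$ and holds at $t=0$ because $\mathrm{Sing}(F')=\Gamma'$ is disjoint from $\ell_1$. For the cohomological hypothesis $H^i(\inhom(\mathbf{E}'_t,Q''))=0$ $(i\ge1)$, I note that on $\ell_1$, where $\mathbf{E}'_t$ is locally free, $\inhom(\mathbf{E}'_t,Q'') \simeq \iota_{\ell_1*}\bigl((\mathbf{E}'_t|_{\ell_1})^\dual \otimes\mathcal{O}_{\ell_1}(\pt)\bigr)$; since $F'|_{\ell_1}=E|_{\ell_1}$ is trivial by \eqref{E restr on}, semi-continuity yields $\mathbf{E}'_t|_{\ell_1}\simeq\mathcal{O}_{\ell_1}^{\oplus 2}$ on a neighborhood of $0$, and after shrinking $C$ the sheaf becomes $\iota_{\ell_1*}\mathcal{O}_{\ell_1}(\pt)^{\oplus 2}$, whose $H^i$ vanish for $i\ge1$.

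Setting $\mathbf{F}_1:=\ker\mathbf{s}$ gives a $C$-flat family with $(\mathbf{F}_1)_0\simeq F_1$, and for $t\ne0$ the fibre $(\mathbf{F}_1)_t$ is an elementary transform of the instanton bundle $\mathbf{E}'_t$ along the line $\ell_1$ with line bundle $\mathcal{O}_{\ell_1}(-\pt)$; hence $[(\mathbf{F}_1)_t]\in\cald(1,n)\subset\partial\cali(n)\subset\overline{\cali(n)}$ by Proposition \ref{Hooft}. Closedness of $\overline{\cali(n)}$ then forces $[F_1]\in\overline{\cali(n)}$, and since $F_1$ is singular along $\Gamma_1$ it lies in $\partial\cali(n)$. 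The most delicate point is establishing the cohomological vanishing required by Lemma \ref{F,G} uniformly in $t\in C$; this is achieved only after a semi-continuity argument starting from $t=0$ and a subsequent shrinking of $C$, as explicitly permitted by that lemma.
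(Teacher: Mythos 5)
Your route is the paper's own: the same intermediate sheaf $F'=\ker e'$ giving the exact triple $0\to F_1\to F'\to \iota_{1*}\mathcal{O}_{\ell_1}(\pt)\to 0$, the same use of the inductive hypothesis to produce a flat family $\mathbf{E}'$ over a pointed curve $(C,0)$ degenerating charge-$(n-1)$ instanton bundles to $F'$, the same application of Lemma \ref{F,G} (with the same fibrewise local-freeness along $\ell_1$ and the same semicontinuity-plus-shrinking argument for the vanishing (\ref{vanish Hi})), and the same conclusion through $\cald(1,n)$ and Proposition \ref{Hooft}. However, there is one step you never address, and without it your final deduction does not parse: you need $F_1$ itself to be Gieseker stable. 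Stability is what makes $[F_1]$ a point of $\calm(n)$ at all, and what makes the classifying map of the flat family $\mathbf{F}_1$ a morphism $C\to\calm(n)$ whose value at $t=0$ is the class of $F_1$; only then does closedness of $\overline{\cali(n)}$ in $\calm(n)$ give $[F_1]\in\overline{\cali(n)}$. If the central fibre were unstable, the family would define a morphism only on $C\setminus\{0\}$, and properness of $\calm(n)$ (via Langton-type modification) would produce a limit point that is the moduli class of some altered family, a priori unrelated to $F_1$.

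This omission is not cosmetic, because stability of $F_1$ is not formal here. When $n>m$ it follows from Corollary \ref{rk2-mu-stable}, since $F_1$ is a transform of the nontrivial locally free instanton $E$. But the proposition includes the case $n=m$, where $E=\op3^{\oplus 2}$: by Lemma \ref{mu-semistable}, transforms of the trivial sheaf are $\mu$-semistable but never $\mu$-stable, so Gieseker stability must be proved directly, and that is exactly the content of Lemma \ref{stable2}, which treats quotients $Q$ of the form (\ref{defn-Q}) --- including the split case $Q'\oplus Q''$ arising from your disjoint union $\Gamma'\sqcup\ell_1$. (The same remark applies, via Lemma \ref{stable1}, to your claim that $[F']\in\cald(m-1,n-1)$, though there it is already built into the construction of $\cald$ in Section 6.) The paper inserts precisely this citation --- ``$F_1$ is stable by Corollary \ref{rk2-mu-stable} and Lemma \ref{stable2}, so $[F_1]\in\calm(n)$'' --- before taking the limit. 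Add that, and the rest of your argument stands as written.
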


\begin{proof}
Let $F'$ be the kernel of the composition
$$ \varepsilon_1: E \ontoo{e_1} (\iota_{1*}L)(2)\ontoo{e'}
\iota_{1*}\mathcal{O}_{\Gamma'}((2m-3)\pt ), $$
where $e'$ is the projection onto the direct summand. We thus obtain a short exact sequence
\begin{equation}\label{def E'}
0 \to F' \to E \xrightarrow{\varepsilon_1} \iota_{1*}\mathcal{O}_{\Gamma'}((2m-3)\pt ) \to 0,
\end{equation}
which shows, since $[E]\in\cali(n-m)$, that
$$
[F']\in\mathcal{D}(m-1,n-1).
$$
Furthermore, since $\Gamma_1$ is a disjoint union of $\Gamma'$ and $\ell_1$, the last isomorphism in (\ref{E restr on}) yields
\begin{equation}\label{E' restr on}
F'|_{\ell_1}\simeq\mathcal{O}_{\ell_1}^{\oplus2}.
\end{equation}
Besides, (\ref{def E1}) and (\ref{def E'}) imply the exact triple
\begin{equation}\label{E' triple}
0\to F_1 \to F' \overset{s'} \to \iota_{1*}\mathcal{O}_{\ell}(\pt )\to0.
\end{equation}

By hypothesis, $[F']$ is in $\partial{\cali(n-1)}$.
Hence we can find a smooth irreducible curve $ C$ with marked point 0 and a $\mathcal{O}_{ C\times\mathbb{P}^3}$-sheaf $\boldsymbol{F}'$, flat over $C$, such that
\begin{equation}\label{prop of E'}
[\boldsymbol{F}'|_{\{y\}\times\p3}]\in\cali(n-1),\ \ \ y\in  C\smallsetminus\{0\},\ \ \ \ \ \
\boldsymbol{F}'|_{\{0\}\times\p3}\simeq F'.
\end{equation}
By (\ref{E restr on}),  $F'$ is locally free along $\ell_1$, and by
(\ref{prop of E'}),  $\mathbf{E}:=\boldsymbol{F}'$ is locally free
along $\mathrm{Supp}(\mathbf{G})$, where $\mathbf{G}=\boldsymbol{\iota}_{1*}(\mathcal{O}_ C\boxtimes\mathcal{O}_{\ell_1}(\pt ))$
and $\boldsymbol{\iota}_1=\mathrm{id}\times(\iota_1|_{\ell_1}):  C\times\ell_1\hookrightarrow  C\times\p3$ is the embedding. 
Moreover, (\ref{E restr on}) implies that, after possibly shrinking $ C$, we may assume that $\mathbf{E}|_{\{y\}\times\ell_1}\simeq\mathcal{O}_{\ell_1}^{\oplus2}$ for all $y\in C$. This together with the definition of $\mathbf{G}$ yields (\ref{vanish Hi}). Hence $\mathbf{E}$, $\mathbf{G}$ satisfy the hypothesis of Lemma \ref{F,G}, and there exists an epimorphism
$\mathbf{s}'|:\mathbf{E}\twoheadrightarrow\mathbf{G}$ extending the epimorphism $s'$ in (\ref{E' triple}). We thus obtain the short exact sequence
\begin{equation}\label{bold E'}
0 \to \boldsymbol{F}_1 \to \boldsymbol{F}' \overset{\mathbf{s}'} \to
\boldsymbol{\iota}_{1*}(\mathcal{O}_ C\boxtimes\mathcal{O}_{\ell_1}(\pt ))\to0
\end{equation}
extending (\ref{E' triple}), that is
\begin{equation}\label{restr=E1}
\boldsymbol{F}_1|_{\{0\}\times\p3}\simeq F_1.
\end{equation}
Restricting (\ref{bold E'}) to $\{y\}\times\p3$ for $y\in C$,
we obtain, by (\ref{prop of E'}), that
$[\boldsymbol{F}_1|_{\{y\}\times\p3}]\in\cald(1,n)$ for $y\in  C\smallsetminus\{0\}$. Moreover, $F_1$ is stable by
Corollary \ref{rk2-mu-stable} and Lemma \ref{stable2}, so $[F_1]\in\calm(n)$.
This together with (\ref{restr=E1}) implies that $[F_1]\in\overline{\cald(1,n)}$, and the proposition
follows from Proposition \ref{Hooft}.
\end{proof}

Let now $n\geq m\geq 2$, $[E]\in \cali(n-m)$, let $\Gamma_0=\Gamma'\cup\ell_0$, where $\Gamma'$ is a smooth rational curve of degree $m-1$, $\ell_0$ is a line intersecting $\Gamma'$ quasi-transversely at one point, say, $x$, and let the following properties hold:
\begin{equation}\label{Gamma0}
N_{\Gamma'/\p3} \simeq \mathcal{O}_{\Gamma'}((2m-3)\pt )^{\oplus2},\ 
\end{equation}
\begin{equation}\label{E restr 2}
E|_{\Gamma'}\simeq\mathcal{O}_{\Gamma'}^{\oplus2},\  E|_{\ell_0}\simeq\mathcal{O}_{\ell_0}^{\oplus2}.
\end{equation}
Note that (\ref{Gamma0}) and the first two isomorphism in  (\ref{E restr 2}) mean, in the notation of Section \ref{ratcurves}, that $\Gamma'\in\calr^*_{0}(m-1)_E$.

We will now treat the transform $E_0$ of the instanton bundle $E$ along the non-invertible sheaf
$\mathcal{O}_{\ell} (-{\rm \pt})\oplus\mathcal{O}_{\Gamma'} (-{\rm \pt})$ on $\Gamma_0$.

\begin{proposition}\label{E0 in}
Let $n\geq m\geq 2$, and let $E$, $\Gamma_0$ be as above. Set
$L_0=\mathcal{O}_{\Gamma'}(-{\rm \pt} )\oplus\mathcal{O}_{\ell_0}(-{\rm \pt})$
and consider a sheaf $E_0$ fitting into an exact sequence
\begin{equation}\label{E0 triple0}
0 \to E_0 \to E \to (\iota_{0*}L_0)(2) \to 0,
\end{equation}
where $\iota_0:\Gamma_0\into\p3$ is the natural embedding.
Assume that $\overline{\cald(m-1,n-1)}\subset\partial\cali(n-1)$. Then $[E_0]\in\partial\cali(n)$.
\end{proposition}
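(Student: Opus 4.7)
The plan is to exhibit $[E_0]$ as a limit in $\calm(n)$ of stable sheaves already known to lie in $\partial\cali(n)$ by Proposition \ref{C-def}. The natural attempt to mimic Proposition \ref{C-def} by applying Lemma \ref{F,G} to an intermediate transform $F'$ of $E$ along $\Gamma'$ fails, because $F'$ is typically singular at $x=\Gamma'\cap\ell_0$, so the local freeness hypothesis of Lemma \ref{F,G} breaks down. Instead, I would apply Lemma \ref{F,G} directly to $E$, after deforming $\ell_0$ into a family of lines disjoint from $\Gamma'$.

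Concretely, choose a smooth pointed curve $(C,0)$ and a flat family $\boldsymbol\ell\subset C\times\p3$ of lines with $\ell_t:=\boldsymbol\ell|_{\{t\}\times\p3}$ such that $\boldsymbol\ell|_{\{0\}\times\p3}=\ell_0$ and $\ell_t\cap\Gamma'=\varnothing$ for $t\ne0$; after shrinking $C$, semicontinuity of $h^0(E|_{\ell_t}(-\pt))$ gives $E|_{\ell_t}\simeq\mathcal{O}_{\ell_t}^{\oplus2}$ for all $t\in C$. Set
$$\mathbf{G}\,:=\,\boldsymbol{\iota}'_*\mathcal{O}_{C\times\Gamma'}\bigl((2m-3)\pt\bigr)\,\oplus\,\boldsymbol{\imath}_*\mathcal{O}_{\boldsymbol\ell}(\pt),$$
with $\boldsymbol{\iota}':C\times\Gamma'\hookrightarrow C\times\p3$ and $\boldsymbol\imath:\boldsymbol\ell\hookrightarrow C\times\p3$ the natural embeddings and the point-divisors relative over $C$. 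The key observation is that $(\iota_{0*}L_0)(2)$ is pushed forward from the normalization $\Gamma'\sqcup\ell_0$ of $\Gamma_0$, and therefore splits as the direct sum $\iota'_*\mathcal{O}_{\Gamma'}((2m-3)\pt)\oplus\imath_{0*}\mathcal{O}_{\ell_0}(\pt)$. Hence $\mathbf{G}$ is flat over $C$, $G_0\simeq(\iota_{0*}L_0)(2)$, and for $t\ne0$ the fiber $G_t$ is precisely the sheaf $(\iota_{1*}L_1)(2)$ of Proposition \ref{C-def} with $\ell_1$ replaced by $\ell_t$.

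I would then apply Lemma \ref{F,G} to $\mathbf{F}:=\pr_2^*E$ starting from $s:=\varphi_0$. The hypotheses hold: $\mathbf{F}$ is locally free, and using $E|_{\Gamma'}\simeq\mathcal{O}_{\Gamma'}^{\oplus2}$ and $E|_{\ell_t}\simeq\mathcal{O}_{\ell_t}^{\oplus2}$ one identifies
$$\inhom(E,G_t)\,\simeq\,\iota'_*\mathcal{O}_{\Gamma'}((2m-3)\pt)^{\oplus2}\,\oplus\,\imath_{t*}\mathcal{O}_{\ell_t}(\pt)^{\oplus2},$$
whose higher cohomology vanishes for every $t\in C$. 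Lemma \ref{F,G} thus produces a surjection $\boldsymbol\varphi:\pr_2^*E\twoheadrightarrow\mathbf{G}$ extending $\varphi_0$, and $\boldsymbol E:=\ker\boldsymbol\varphi$ is flat over $C$ with $\boldsymbol E|_{\{0\}\times\p3}=E_0$ and $\boldsymbol E|_{\{t\}\times\p3}$ (for $t\ne0$) equal to the sheaf $F_1$ of Proposition \ref{C-def}. Every $\boldsymbol E|_t$ is stable (by Corollary \ref{rk2-mu-stable} when $n>m$, by Lemma \ref{stable2} when $n=m$), so $\boldsymbol E$ induces a classifying morphism $f:C\to\calm(n)$. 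By the inductive hypothesis $\overline{\cald(m-1,n-1)}\subset\partial\cali(n-1)$ and Proposition \ref{C-def}, $f(t)\in\partial\cali(n)$ for $t\ne0$; closedness of $\overline{\cali(n)}$ in $\calm(n)$ then gives $[E_0]=f(0)\in\overline{\cali(n)}$, and as $E_0$ is not locally free we conclude $[E_0]\in\partial\cali(n)$. The main delicate point is preserving flatness and the Ext-vanishing across the node of $\Gamma_0$ at $t=0$; the decomposition of $(\iota_{0*}L_0)(2)$ via the normalization resolves both at once.
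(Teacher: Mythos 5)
Your proposal is correct and follows essentially the same route as the paper: deform $\ell_0$ within a one-parameter family of lines disjoint from $\Gamma'$ for $t\neq 0$, observe that the sheaves $(\iota_{t*}L_t)(2)$ split as direct sums of pushforwards from $\Gamma'$ and $\ell_t$ (so they form a $C$-flat family even though the family of curves $\Gamma'\cup\ell_t$ is not flat, cf.\ Remark \ref{nonflat}), extend the given surjection across the family via Lemma \ref{F,G} using the triviality of $E|_{\Gamma'}$ and $E|_{\ell_t}$, identify the fibers at $t\neq 0$ with the sheaves of Proposition \ref{C-def}, and conclude at $t=0$ by stability (Corollary \ref{rk2-mu-stable}, Lemma \ref{stable2}) and closedness of $\partial\cali(n)$ in $\calm(n)$. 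The only cosmetic difference is that you invoke Lemma \ref{F,G} explicitly with the Ext-vanishing check spelled out, where the paper cites the restriction isomorphisms and leaves the application implicit.
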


\begin{proof}
We can include $\Gamma_0$ in a one-parameter family of curves
$\boldsymbol{\Gamma}_U=\{\Gamma_u=\Gamma'\cup\ell_u\}_{u\in U}$ over a smooth
irreducuble curve $U$ such that $\Gamma'\cap\ell_u=\varnothing$ for
any $u\in U\setminus\{0\}$. Shrinking $U$, if necessary, we may assume by semicontinuity and the last isomorphism in 
(\ref{Gamma0}) that
\begin{equation}\label{E|lu}
E|_{\ell_u}\simeq\mathcal{O}_{\ell_u}^{\oplus2},\ \ \ u\in U.
\end{equation}
The union $\boldsymbol{\ell}_U=\bigcup_{u\in U}\ell_u$ is a surface meeting $\Gamma'$ only at $x$. We 
may assume that this intersection is transverse and that the lines $\ell_u$ do not intersect each other (for example, we can start the construction from a family of lines $\ell_u$ which is one of the two families of rectilinear generators of a smooth quadric, and then $\boldsymbol{\ell}_U$ is an open subset of this quadric).
We have $\boldsymbol{\Gamma}_U=(U\times \Gamma')\cup \boldsymbol{\ell}_U$.
 
Consider the following $\mathcal{O}_{\boldsymbol{\Gamma}_U}$-sheaf:
\begin{equation}\label{L1}
\boldsymbol{L}_U = \mathcal{O}_U\boxtimes\mathcal{O}_{\Gamma'}(-\pt )\oplus
(\calo_{\p3}(-1)|_{\boldsymbol{\ell}_U}).
\end{equation}
Then for each $u\in U$, we have
$$ {L}_u:=\boldsymbol{L}_U|_{u\times\p3}\simeq\mathcal{O}_{\Gamma'}(-\pt )\oplus
\mathcal{O}_{\ell_u}(-\pt ). $$
Let $\boldsymbol{\jmath}:\boldsymbol{\Gamma}_U\hookrightarrow U\times\p3$ be the natural embedding.
Then $\boldsymbol{\jmath}|_{\{0\}\times\p3}= \iota_0$, and by (\ref{E|lu}), a surjection
$E\overset{e_0}\twoheadrightarrow \iota_{0*}L_{0}(2)$ from \eqref{E0 triple0} extends
to a surjection
$\mathbf{e}:\mathcal{O}_U\boxtimes E\twoheadrightarrow\boldsymbol{\jmath}_*\boldsymbol{L}_U(2)$,
where twisting a sheaf on $\boldsymbol{\Gamma}_U$ or on $U\times\p3$ by an integer $k$ means tensoring it by $\mathcal{O}_U\boxtimes \calo_{\p3}(k)$.
The $\mathcal{O}_{U\times\mathbb{P}^3}$-sheaf $\boldsymbol{E}_U:=\ker\mathbf{e}$ fits in the exact triple
$$
0 \to \boldsymbol{E}_U\to \mathcal{O}_U\boxtimes E \xrightarrow{\mathbf{e}}
\boldsymbol{\jmath}_*\boldsymbol{L}_U(2) \to 0.
$$
Let $E_u:=\boldsymbol{E}_U|_{\{u\}\times\p3}$ for  $u\in U$.
If $u\in U\setminus\{0\}$, then $\Gamma_u$ is a disjoint union $\Gamma'\sqcup\ell_u$, so the result of Proposition \ref{C-def} holds for $F_1=E_u$. Hence $ [E_u]\in\partial\cali(n)$ for all $u\in U\setminus\{0\}$. For $u=0$, the stability of $E_0$ is assured by
Corollary \ref{rk2-mu-stable} and Lemma \ref{stable2}, so that $[E_0]\in\calm(n)$, and by closedness of $\partial\cali(n)$, we also have
$
[E_0]\in\partial\cali(n).
$
\end{proof}

\begin{remark}\label{nonflat} \rm 
The family of curves $\Gamma_u$ used in the proof is not flat, but it carries a $U$-flat family
of pure rank-1 sheaves $L_u$ that is a part of degeneration data of
a $U$-flat family of instanton sheaves $E_u$.
\end{remark}

Note that, by construction, $E_0$ fits in the exact triple
\begin{equation}\label{E0 triple}
0 \to E_0 \to E \to \iota_*(\mathcal{O}_{\Gamma'}((2m-3)\pt )\oplus\mathcal{O}_{\ell_0}(\pt )) \to 0.
\end{equation}

\begin{proposition}\label{smooth in E0}
Under the hypothesis of Proposition \ref{E0 in}, $\mathrm{Ext}^2(E_0,E_0)=0$, so that $[E_0]$ is a smooth point of $\mathcal{M}(n)$.
\end{proposition}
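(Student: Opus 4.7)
The plan is to replicate the local-to-global Ext computation of Proposition \ref{prop 4.1}, adapted to the reducible curve $\Gamma_0=\Gamma'\cup\ell_0$. The key simplification is that $Q_0:=(\iota_{0*}L_0)(2)$ splits as a direct sum
$$Q_0\simeq \iota_{\Gamma'*}\mathcal{O}_{\Gamma'}((2m-3)\pt)\oplus \iota_{\ell_0*}\mathcal{O}_{\ell_0}(\pt),$$
so most local computations decompose componentwise and reduce to the framework of Section 6.

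First, one reduces $\ext^2(E_0,E_0)$ to $H^1(\inext^1(E_0,E_0))$ via the local-to-global spectral sequence. The required inputs are $\inext^p(E_0,E_0)=0$ for $p\geq 2$ (Lemma \ref{exts}(i)) and $H^p(\inhom(E_0,E_0))=0$ for $p=2,3$. The case $p=3$ uses Lemma \ref{exts}(iii) combined with the spectral sequence. For $p=2$, apply $\inhom(E_0,-)$ to (\ref{E0 triple}) to obtain the analog of exact sequence (\ref{inhom sqc 2}); the cokernel of the natural injection $\inhom(E_0,E_0)\hookrightarrow\inhom(E,E)$ is supported on the curve $\Gamma_0$, and since $\ext^2(E,E)=0$, the required vanishing follows from the associated long exact cohomology sequence.

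Second, one computes $\inext^1(E_0,E_0)$ via the analog of sequence (\ref{inhom sqc 2}). The key identifications are: the isomorphism $\inext^1(E_0,\op3)\simeq\inext^2(Q_0,\op3)$ coming from (\ref{E0 triple}) and $\inext^{1,2}(E,\op3)=0$; the local-duality computation (via formulas (\ref{inexts2}) and (\ref{det nbdl}) applied to each summand of $Q_0$) yielding $\inext^2(Q_0,\op3)\simeq \iota_{\Gamma'*}\mathcal{O}_{\Gamma'}((2m-3)\pt)\oplus\iota_{\ell_0*}\mathcal{O}_{\ell_0}(\pt)$; and the restriction calculation of $E_0|_{\Gamma'}$ and $E_0|_{\ell_0}$ via the $\mathrm{Tor}_1$-sequence of (\ref{E0 triple}) as in (\ref{sqc2})--(\ref{sqc4}), using $E|_{\Gamma'}\simeq\mathcal{O}_{\Gamma'}^{\oplus 2}$ and $E|_{\ell_0}\simeq\mathcal{O}_{\ell_0}^{\oplus 2}$. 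These realize $\inext^1(E_0,E_0)$ as an iterated extension of $\iota_*$-pushforwards of line bundles of non-negative degree on the rational components $\Gamma'$ and $\ell_0$, with possible zero-dimensional contributions at the node $x=\Gamma'\cap\ell_0$.

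Finally, $H^1(\inext^1(E_0,E_0))=0$ follows componentwise: each line bundle involved has non-negative degree on $\mathbb{P}^1$, and zero-dimensional contributions carry no $H^1$. The main obstacle will be the bookkeeping at the node $x$: the transverse intersection of $\Gamma'$ and $\ell_0$ produces cross-terms in the local Ext computations (for instance, local Homs between pushforwards from the two components are length-one sheaves supported at $x$), and these must be tracked carefully through the long exact cohomology sequences. However, since $\Gamma_0$ is a reduced nodal tree of arithmetic genus zero, no genuine topological obstruction to the vanishing arises, and the componentwise $H^1$-vanishings assemble into the global vanishing $\ext^2(E_0,E_0)=0$, so $[E_0]$ is a smooth point of $\calm(n)$.
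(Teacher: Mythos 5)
Your plan defers the real work to ``bookkeeping at the node,'' but that bookkeeping is not a technicality --- it is the entire content of the proposition, and your reductions break down exactly there. Two concrete failures. First, the claim that the local computations ``decompose componentwise and reduce to the framework of Section 6'' is false at $x=\Gamma'\cap\ell_0$: since $E$ is locally free of rank $2$ at $x$, any local surjection $\calo_{\p3,x}^{\oplus2}\to\calo_{\Gamma',x}\oplus\calo_{\ell_0,x}$ has kernel isomorphic to $I_{\Gamma',x}\oplus I_{\ell_0,x}$, so $\inext^1(E_0,E_0)$ acquires cross-terms such as $\inext^1(I_{\Gamma'},I_{\ell_0})$, built out of the punctual sheaves $\inext^i(\calo_{\Gamma'},\calo_{\ell_0})\simeq\mathbf{k}(x)$; none of these are computed in Section 6. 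Second, the ``analog of sequence (\ref{inhom sqc 2})'' you invoke does not exist as stated: that sequence rests on Lemma \ref{quot-homs}, whose hypothesis is that the quotient is an \emph{invertible} sheaf on a reduced lci curve, whereas here $(\iota_{0*}L_0)(2)$ with $L_0=\mathcal{O}_{\Gamma'}(-\pt)\oplus\mathcal{O}_{\ell_0}(-\pt)$ is precisely what the paper calls a non-invertible sheaf on $\Gamma_0$ (its stalk at $x$ needs two generators), so the term $(\iota_*L^2)(4)$ and the identification of $\inhom(E,E)/\inhom(E_0,E_0)$ must be redone from scratch. A smaller gap of the same kind: to get $H^2(\inhom(E_0,E_0))=0$ it is not enough that the cokernel $C$ of $\inhom(E_0,E_0)\hookrightarrow\inhom(E,E)$ is supported on a curve and $\ext^2(E,E)=0$; the long exact sequence leaves the obstruction $H^1(C)\to H^2(\inhom(E_0,E_0))$, so you also need $H^1(C)=0$ (true, because $C$ is a quotient of $\inhom(E,(\iota_{0*}L_0)(2))\simeq((\iota_{0*}L_0)(2))^{\oplus2}$, which has vanishing $H^1$, but you never make this argument). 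Your closing appeal to the absence of a ``genuine topological obstruction'' is an assertion of the conclusion, not a proof of it.

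The paper sidesteps all of this by never touching the diagonal invariants $\inhom(E_0,E_0)$ and $\inext^1(E_0,E_0)$. It interpolates $E_0\subset F\subset E$, where $F$ is the transform of $E$ along $\Gamma'$ alone, so $E/F\simeq L':=\iota_*\mathcal{O}_{\Gamma'}((2m-3)\pt)$ and $F/E_0\simeq\iota_*\mathcal{O}_{\ell_0}(\pt)$, and applies $\Ext^{\scriptscriptstyle\bullet}(-,E_0)$ to $0\to E_0\to F\to\iota_*\mathcal{O}_{\ell_0}(\pt)\to0$, sandwiching $\Ext^2(E_0,E_0)$ between $\Ext^2(F,E_0)$ and $\Ext^3(\iota_*\mathcal{O}_{\ell_0}(\pt),E_0)$. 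The latter vanishes by Serre--Grothendieck duality together with torsion-freeness of $\inhom(E_0,\calo_{\ell_0}(-3\pt))$ as an $\calo_{\ell_0}$-module; the former vanishes by a local-to-global argument for the \emph{off-diagonal} pair $(F,E_0)$, where $\inext^2(F,E_0)=0$ comes for free because $F$ has homological dimension $1$. The node still enters, but only through the skyscrapers $\inext^i(L',\iota_*\mathcal{O}_{\ell_0}(\pt))\simeq\mathbf{k}(x)$, which are tracked explicitly. To rescue your route you would have to carry out the local Ext computation at $x$ for $E_0\simeq I_{\Gamma'}\oplus I_{\ell_0}$ and verify the degrees of every line-bundle constituent; that computation, which you omit, is where the proposition lives.
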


\begin{proof}
Denote $L':=\iota_*(\mathcal{O}_{\Gamma'}((2m-3)\pt))$ and $L_0:=\iota_*(\mathcal{O}_{\ell_0}(\pt))$. Then (\ref{E0 triple}) implies the exact triples
\begin{equation}\label{E0 triple A}
0\to F\to E\to L'\to0,
\end{equation}
\begin{equation}\label{E0 triple B}
0\to E_0\to F\to L_0\to0,
\end{equation}
where $F$ is the kernel of the composition
$E\twoheadrightarrow L'\oplus L_0
\ontoo{pr_1}L'$.
Applying to (\ref{E0 triple B}) the functor $\mathrm{Ext}^{\scriptscriptstyle\bullet}(-,E_0)$ 
we obtain the exact sequence
\begin{equation}\label{E0 triple C}
\mathrm{Ext}^2(F,E_0)\to\mathrm{Ext}^2(E_0,E_0)\to
\mathrm{Ext}^3(L_0,E_0),
\end{equation}
By Serre--Grothendieck duality, 
$\mathrm{Ext}^3(L_0,E_0)=
\mathrm{Hom}(E_0,L_0(-4{\pt}))^{\dual}$.
Applying $\inhom(-,L_0(-4{\pt}))$ to \eqref{E0 triple} and using (\ref{E restr 2}), we obtain the exact sequence
$$
0\to\calo_{\ell_0}(-4\pt)\to2\calo_{\ell_0}(-3\pt)\to
\inhom(E_0, \calo_{\ell_0}(-3\pt))\to
\calo_{\ell_0}(-3\pt)^{\oplus2}\oplus \mathbf{k}(x)\to 0
$$
where $\mathbf{k}(x)\simeq \inext^1(L',L_0(-4\pt))
\simeq \inext^2(L',L_0(-4\pt))$ is a skyscraper of length~1 at the point $x=\Gamma'\cap\ell_0$.
Since $\calo_{\ell_0}(-3\pt)$ has no torsion as an $\calo_{\ell_0}$-module,
$\inhom(E_0, \calo_{\ell_0}(-3\pt))$ is also a torsion free $\calo_{\ell_0}$-module, 
hence $\inhom(E_0, \calo_{\ell_0}(-3\pt))\simeq
\calo_{\ell_0}(-\pt)\oplus\calo_{\ell_0}(-3\pt)^{\oplus2}$ and
\begin{equation}\label{Ext3=0}
\mathrm{Ext}^3(L_0,E_0)=
\mathrm{Hom}(E_0,L_0(-4{\pt}))^{\dual}=H^0(\inhom(E_0, \calo_{\ell_0}(-3\pt)))^\dual=0.
\end{equation}
Next, since $L'$ has
homological dimension 2 and $E$ is locally free, $F$ has homological dimension 1. Therefore,
\begin{equation}\label{Ext2(F,E0)=0}
\inext^2(F,E_0)=0.
\end{equation}
Also, since $E$ is locally free, (\ref{E0 triple A}) implies that
\begin{equation}\label{Ext1(F,E0)=}
\inext^1(F,E_0)=\inext^2(L',E_0).
\end{equation}
Next, (\ref{Gamma0}) implies that
\begin{equation}\label{two inexts}
\begin{split}
&\inext^1(L',L')\simeq N_{\Gamma'/\mathbb{P}^3}\simeq
\mathcal{O}_{\Gamma'}((2m-3){\pt})^{\oplus2},\\
&\inext^2(L',L')\simeq 
\det N_{\Gamma'/\mathbb{P}^3}\simeq
\mathcal{O}_{\Gamma'}((4m-6){\pt}).
\end{split}
\end{equation}
Similarly, by
(\ref{E restr 2}), we have
\begin{equation}\label{Ext2(L',E)=}
\inext^2(L',E)\simeq
\det N_{\Gamma'/\mathbb{P}^3}
\otimes((L')^{\oplus2})^{\dual}\simeq
\mathcal{O}_{\Gamma'}((2m-3){\pt})^{\oplus2}.
\end{equation}
Now applying $\inext^{\scriptscriptstyle\bullet}(L',-)$ to (\ref{E0 triple}),
we obtain the exact sequence
$$
\inext^1(L',L'\oplus L_0)
\overset{\gamma}\to\inext^2(L',E_0)\to
\inext^2(L',E)\overset{\delta}\to
\inext^2(L',L'\oplus L_0).
$$
Using (\ref{two inexts}), (\ref{Ext2(L',E)=}) and the relations  
$\inext^1(L',L_0)\simeq\inext^2(L',L_0)\simeq\mathbf{k}(x)$,
we rewrite the last sequence as follows:
\begin{multline*}
\mathcal{O}_{\Gamma'}((2m-3){\pt})^{\oplus2}\oplus
\mathbf{k}(x)
\overset{\gamma}\to\inext^2(L',E_0)\to
\mathcal{O}_{\Gamma'}((2m-3){\pt})^{\oplus2}\overset{\delta}\to\\
\mathcal{O}_{\Gamma'}((4m-6){\pt})\oplus\mathbf{k}(x)\to 0.
\end{multline*}
It implies that $\ker(\delta)$ is an $\mathcal{O}_{\Gamma'}$-sheaf of
degree $\geq -1$, so $h^1(\ker(\delta))=0$. Similarly, 
$h^1(\mathrm{im}(\gamma))=0$. Hence 
$H^1(\inext^2(L',E_0))=0$, and by (\ref{Ext1(F,E0)=}),
\begin{equation}\label{h1(Ext1(F,E0))=0}
H^1(\inext^1(F,E_0))=0.
\end{equation}
Next, as $E$ is locally free, (\ref{E0 triple}) yields the exact triple
$$ H^1(\inhom(E,L'\oplus L_0))\to
H^2(\inhom(E,E_0))\to H^2(\inhom(E,E)). $$
From (\ref{E restr 2}) and the definition of $L'$ and $L_0$, it follows that
$H^1(\inhom(E,L'\oplus L_0))=0$. Besides,
since $[E]\in\mathcal{I}(n-m)$, it follows that $H^2(\inhom(E,E))=0$.
Thus we have
\begin{equation}\label{H2(hom(E,E_0))=0}
H^2(\inhom(E,E_0))=0.
\end{equation}
Applying $\inhom(-,E_0)$ to (\ref{E0 triple A}),
we obtain the exact triple
$0\to\inhom(E,E_0)\to\inhom(F,E_0)\overset{\varepsilon}{\to}
\inext^1(L',E_0)$. 
As $\mathrm{Supp}(L')=\Gamma'$, it follows that
$H^2(\mathrm{im}(\varepsilon))=0$, and the last exact triple together
with (\ref{H2(hom(E,E_0))=0}) yields
$H^2(\inhom(F,E_0))=0$.
This equality together with (\ref{Ext2(F,E0)=0}), (\ref{h1(Ext1(F,E0))=0}) and the spectral sequence
$E^{pq}_2=H^p(\inext^q(F,E_0))\Rightarrow
\mathrm{Ext}^{\scriptscriptstyle\bullet}(F,E_0)$ implies that
$\mathrm{Ext}^2(F,E_0)=0$.
By (\ref{E0 triple C}) and (\ref{Ext3=0}), we also have
$\mathrm{Ext}^2(E_0,E_0)=0$.
\end{proof}

\begin{proposition}\label{Dmn in In}
Let $n\geq m\ge2$, and assume that $\overline{\cald(m-1,n-1)}\subset\partial\cali(n-1)$. Then
$\overline{\cald(m,n)}\subset\cali(n)$.
\end{proposition}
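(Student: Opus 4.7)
The proof proceeds by induction on $m$, the base case $m=1$ being Proposition~\ref{Hooft}. For the inductive step, the hypothesis together with Propositions~\ref{E0 in} and~\ref{smooth in E0} provides the following: for any $E\in\cali(n-m)$ and any nodal rational curve $\Gamma_0=\Gamma'\cup\ell_0\subset\p3$ of arithmetic genus~$0$ and degree $m$ satisfying the hypotheses of Proposition~\ref{E0 in}, the corresponding transform $E_0$ of $E$ along $L_0=\mathcal{O}_{\Gamma'}(-\pt)\oplus\mathcal{O}_{\ell_0}(-\pt)$ satisfies $[E_0]\in\partial\cali(n)$ and is a smooth point of $\calm(n)$. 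The plan is thus to show that $[E_0]\in\overline{\cald(m,n)}$: this pins down the unique irreducible component of $\calm(n)$ through $[E_0]$ as $\overline{\cali(n)}$ and forces $\overline{\cald(m,n)}$ to lie in it.

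To establish $[E_0]\in\overline{\cald(m,n)}$, I would construct a $1$-parameter deformation that simultaneously smooths $\Gamma_0$ and $L_0$. Take a smooth pointed curve $(T,0)$ together with (a)~a flat family $\boldsymbol{\Gamma}\subset T\times\p3$ of curves of degree $m$ with $\boldsymbol{\Gamma}_0=\Gamma_0$ and $\boldsymbol{\Gamma}_t$ a smooth rational curve belonging to $\calr^*_0(m)_E$ for $t\ne 0$ --- such a smoothing of the nodal genus-zero curve $\Gamma_0\subset\p3$ exists by standard deformation theory (in the spirit of the smoothing of chains of lines invoked in Lemma~\ref{res-trivial}), and after shrinking $T$ semicontinuity ensures membership in $\calr^*_0(m)_E$; and (b)~a $T$-flat family $\boldsymbol{\calL}$ of pure rank-$1$ sheaves on the fibers of $\boldsymbol{\Gamma}$ with $\boldsymbol{\calL}_0\simeq L_0$ and $\boldsymbol{\calL}_t\simeq\mathcal{O}_{\boldsymbol{\Gamma}_t}(-\pt)$ for $t\ne 0$. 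Denoting by $\mathbf{G}$ the pushforward to $T\times\p3$ of $\boldsymbol{\calL}\otimes p^*\mathcal{O}_{\p3}(2)$ (with $p\colon\boldsymbol{\Gamma}\to\p3$ the projection), Lemma~\ref{F,G} applied to $\mathbf{F}=\mathcal{O}_T\boxtimes E$ and $\mathbf{G}$ (the required vanishings $H^i(\inhom(E,(\iota_{t*}\calL_t)(2)))=0$ for $i\ge 1$ follow from $E|_{\boldsymbol{\Gamma}_t}\simeq\mathcal{O}_{\boldsymbol{\Gamma}_t}^{\oplus 2}$) extends the given surjection $E\twoheadrightarrow(\iota_{0*}L_0)(2)$ to a surjection $\mathbf{e}\colon \mathcal{O}_T\boxtimes E\twoheadrightarrow\mathbf{G}$ over $T\times\p3$. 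Its kernel $\boldsymbol{F}:=\ker\mathbf{e}$ is then a $T$-flat family with $\boldsymbol{F}_0\simeq E_0$ and $\boldsymbol{F}_t$ representing a point of $\cald(m,n)$ for $t\ne 0$, exhibiting $[E_0]\in\overline{\cald(m,n)}$.

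To conclude, $\overline{\cali(n)}$ is an irreducible component of $\calm(n)$ of dimension $8n-3$ containing $[E_0]$ (by Proposition~\ref{E0 in}), and $[E_0]$ is a smooth point of $\calm(n)$, so $\overline{\cali(n)}$ is the unique irreducible component of $\calm(n)$ through $[E_0]$. Since $\overline{\cald(m,n)}$ is irreducible and also passes through $[E_0]$, it lies in $\overline{\cali(n)}$. Because every sheaf in $\cald(m,n)$ is non-locally-free by definition, $\cald(m,n)\subset\partial\cali(n)$; closing up inside the closed subset $\partial\cali(n)\subset\overline{\cali(n)}$ yields $\overline{\cald(m,n)}\subset\partial\cali(n)$. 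The main technical obstacle is step~(b): one must choose $\boldsymbol{\calL}$ so that its limit at $t=0$ is the non-locally-free sheaf $L_0$ rather than some element of $\mathrm{Pic}^{-1}(\Gamma_0)\simeq\mathbb{G}_m$. This requires working with the compactified relative Jacobian of $\boldsymbol{\Gamma}/T$ and can be carried out concretely by taking $\boldsymbol{\calL}$ as the pushforward to $\boldsymbol{\Gamma}$ of an appropriate line bundle on a partial normalization of the total space $\boldsymbol{\Gamma}$ along the node of $\Gamma_0$.
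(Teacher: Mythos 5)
Your proposal is correct, and its endgame is exactly the paper's: $[E_0]$ is a smooth point of $\calm(n)$ lying in $\partial\cali(n)$ (Propositions \ref{E0 in} and \ref{smooth in E0}), so once $[E_0]\in\overline{\cald(m,n)}$ is known, the irreducible variety $\overline{\cald(m,n)}$ must lie in the unique component $\overline{\cali(n)}$ through $[E_0]$, hence in $\partial\cali(n)$ since its points are non locally free. The genuine difference is how $[E_0]$ gets into $\overline{\cald(m,n)}$. The paper uses \emph{two} one-parameter families, each fed into Lemma \ref{F,G}: first, keeping the nodal curve $\Gamma_0$ fixed, it deforms the split sheaf $(\iota_{0*}L_0)(2)$ to an invertible $\mathcal{O}_{\Gamma_0}$-sheaf $M_{t_1}$ through the universal extension family \eqref{extns on C0}; then it smooths the pair $(\Gamma_0,M_{t_1})$ to pairs $(\Gamma_b,\tilde{M}_b)$ with $\tilde{M}_b$ invertible of degree $2m-1$ on the smooth curve $\Gamma_b$, whose transforms lie in $\cald(m,n)$. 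You compress this into a single family in which the curve smooths while the sheaf degenerates from $\mathcal{O}_{\Gamma_t}(-\pt)$ directly to the non-locally-free $L_0$. This works, and the ``main technical obstacle'' you flag can be filled essentially as you suggest: let the marked point collide with the node, which after a degree-two base change produces a total space with an $A_1$-singularity ($xy=s^2$ locally) at the node admitting a section $\sigma$ through it; then $\boldsymbol{\mathcal{L}}:=I_{\sigma}$ is $T$-flat (its quotient $\mathcal{O}_\sigma$ is $T$-flat) with $\boldsymbol{\mathcal{L}}_t\simeq\mathcal{O}_{\Gamma_t}(-x_t)$ for $t\ne0$ and $\boldsymbol{\mathcal{L}}_0=I_{x_0,\Gamma_0}\simeq\mathcal{O}_{\Gamma'}(-\pt)\oplus\mathcal{O}_{\ell_0}(-\pt)=L_0$, because the ideal sheaf of a node is precisely the pushforward of $\mathcal{O}(-\pt)\boxplus\mathcal{O}(-\pt)$ from the normalization. (Only your wording ``partial normalization of the total space'' is off: after base change that surface is already normal; the section's ideal sheaf, or a pushforward from the semistable model, is the right object.) What each route buys: yours needs one deformation and one application of Lemma \ref{F,G}, but rests on this compactified-Jacobian flat-limit fact; the paper's two-step detour is longer but never needs such a limit, since its sheaf families are a universal extension over a fixed curve and an invertible sheaf on a family of curves, so flatness and the identification of the special fibers are automatic. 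Two small cautions on your sketch: for $t\ne0$, membership $\Gamma_t\in\calr^*_0(m)$ (balanced normal bundle) is a genericity requirement on the chosen smoothing --- it is a dense open condition on $\calr_0(m)$ --- and not a consequence of semicontinuity, which only yields triviality of $E|_{\Gamma_t}$; and the induction on $m$ belongs to Theorem \ref{Thm 4.6}, the proposition itself being just the induction step.
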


\begin{proof}
Take $E$, $\Gamma_0=\Gamma'\cup\ell_0$ as in Propositions \ref{E0 in} and \ref{smooth in E0} and consider the extensions of $\mathcal{O}_{\p3}$-sheaves of the form
\begin{equation}\label{extns on C0}
0 \to \iota_*\mathcal{O}_{\ell_0}(\pt ) \to M \to \iota_*\mathcal{O}_{\Gamma'}((2m-3)\pt ) \to 0.
\end{equation}
We have
$\inext_{\op3}^1(\iota_*\mathcal{O}_{\Gamma'}((2m-3)\pt ),
\iota_*\mathcal{O}_{\ell_0}(\pt ))\simeq\mathbf k(x)$,
where $x=\Gamma'\cap\ell_0$, and $\mathrm{Ext}_{\op3}^1(\iota_*\mathcal{O}_{\Gamma'}((2m-3)\pt),
\iota_*\mathcal{O}_{\ell_0}(\pt ))\simeq\mathbf k$. There is a universal family of extensions
(\ref{extns on C0}) over the affine line $\mathbb{A}^1=\mathbf{V}(\mathrm{Ext}_{\p3}^1(\iota_*\mathcal{O}_{\Gamma'}((2m-3)\pt ),
\iota_*\mathcal{O}_{\ell_0}(\pt ))^{\dual})$ (see \cite[Proposition 3.1]{HH1}):
\begin{equation}\label{univ}
0 \to \boldsymbol{\iota}_*(\mathcal{O}_{\mathbb{A}^1}\boxtimes\mathcal{O}_{\ell_0}(\pt ))\to\boldsymbol{M} \to
\boldsymbol{\iota}_*(\mathcal{O}_{\mathbb{A}^1}\boxtimes\mathcal{O}_{\Gamma'}((2m-3)\pt ))
\to 0,
\end{equation}
where $\boldsymbol{\iota}=id_T\times \iota_0$ and 
$\iota_0:\Gamma_0\hookrightarrow\p3$ is the embedding.

Let $M_t=\boldsymbol{M}|_{\{t\}\times\p3},\ t\in\mathbb{A}^1$. For $t=0$ the extension
(\ref{extns on C0}) splits, i.e.
\begin{equation}\label{L0}
M_0\simeq \iota_*\mathcal{O}_{\Gamma'}((2m-3)\pt )\oplus \iota_*\mathcal{O}_{\ell_0}(\pt ),
\end{equation}
and we may rewrite the triple (\ref{E0 triple}) in the form
\begin{equation}\label{E0 triple 0}
0 \to E_0\to E\overset{s} \to M_0 \to 0.
\end{equation}
On the other hand, for any $t\in\mathbb{A}^1\setminus\{0\}$, the sheaf $M:=M_t$ is a locally 
free $\mathcal{O}_{\Gamma_0}$-sheaf fitting in the exact triple (\ref{extns on C0}).
Take an arbitrary open subset $T$ of $\mathbb{A}^1$ containing 0.
Then the curve $T$ and the sheaves $\mathbf{F}:=\mathcal{O}_T\boxtimes E$ and $\mathbf{G}:=\boldsymbol{M}$ satisfy the hypothesis of Lemma \ref{F,G},
hence by this Lemma, after possibly shrinking $T$,
the epimorphism $s$ in (\ref{E0 triple 0}) extends to an epimorphism
$\mathbf{s}:\mathcal{O}_T\boxtimes E\twoheadrightarrow\boldsymbol{M}$. We thus obtain
the  exact triple
$$0 \to \boldsymbol{E}_T\to\mathcal{O}_T\boxtimes E\xrightarrow{\mathbf{s}}\boldsymbol{M} \to 0.$$
Restricting this triple to any point $t\in T$ and denoting
$E_t=\boldsymbol{E}_T|_{\{t\}\times\p3}$, we obtain an exact triple
\begin{equation}\label{Et}
0 \to E_t\to E\to M_t \to 0,\ \ \ t\in T.
\end{equation}
By Corollary \ref{rk2-mu-stable} and Lemma \ref{stable2}, $E_t$ is stable for any $t\in T$. We thus have a well-defined morphism $\phi:T\to\mathcal{M}(n)$ given by
$t\mapsto[E_t]$.

For $t\neq 0$, the sheaf $M_t$ is a locally free $\mathcal{O}_{\Gamma_0}$-sheaf fitting in
(\ref{extns on C0}). Hence
\begin{equation}\label{res L}
M_t|_{\Gamma'}\simeq \iota_*\mathcal{O}_{\Gamma'}((2m-3)\pt ),\ \ \
M_t|_{\ell_0}\simeq \iota_*\mathcal{O}_{\ell_0}(2\pt ).
\end{equation}

Next, by Proposition \ref{smooth in E0}
$[E_0]$ is a smooth point of $\mathcal{M}(n)$. 
In addition, since the triples (\ref{E0 triple}) and
(\ref{E0 triple 0}) coincide, it follows that $[E_0]\in\partial\cali(n)$ by Proposition \ref{E0 in}. Thus,
\begin{equation}\label{Et in}
[E_0]\in\partial\cali(n)\setminus\mathrm{Sing}\,\mathcal{M}(n).
\end{equation}

Fix a point $t_1\in T\setminus\{0\}$, denote ${\tilde{M}}_0:=M_{t_1}$,
${\tilde{E}}_0:=E_{t_1}$, and rewrite the triple (\ref{Et}) for $t=t_1$:
\begin{equation}\label{tildeE0}
0 \to {\tilde{E}}_0\to E \overset{\varepsilon}\to {\tilde{M}}_0 \to 0.
\end{equation}
Now, as in Lemma \ref{res-trivial}, one can see that
there exists a family of curves $\{\Gamma_b\}_{b\in B}$, parametrized by a curve $B$ with a marked point $0\in B$, such that 
$\Gamma_0=\Gamma'\cup\ell_0$ and $\Gamma_b$ is smooth for $b\in B\setminus\{0\}$, 
and such that ${\tilde{M}}_0$ deforms into an invertible sheaf ${\tilde{M}}_b$ of degre $2m-1$ on $\Gamma_b$ for $b\in B\setminus\{0\}$.
Denote by $\iota_b$ the embedding $\Gamma_b\hookrightarrow\p3$ 
for $b\in B$.
Let $\boldsymbol{\Gamma}\to B$ be the family $\{\Gamma_b\}_{b\in B}$ and 
$\boldsymbol{\iota}=\{\iota_b\}_{b\in B}:
\boldsymbol{\Gamma}\hookrightarrow B\times\p3$ the embedding; for $b=0$, we have the embedding $\iota_0:{\Gamma}_0\hookrightarrow\p3$ defined earlier.
The family of sheaves $\{\tilde{M}_b\}_{b\in B}$ constitutes
an invertible $\mathcal{O}_{\boldsymbol{\Gamma}}$-sheaf
$\boldsymbol{\tilde{M}}$, so that  $\boldsymbol{\iota}_*\boldsymbol{\tilde{M}}$ is
a $\mathcal{O}_{B\times\p3}$-sheaf, flat over $B$, and
the sheaves
$\tilde{M}_b:=\boldsymbol{\iota}_*\boldsymbol{\tilde{M}}|_{\{b\}\times\p3},\ b\in B$, satisfy the relations
\begin{equation}\label{Mb for b ne 0}
{\tilde{M}}_0=M_{t_1},\ \ \ \ 
\tilde{M}_b\simeq \iota_{b*}(\mathcal{O}_{\Gamma_b}(2m-3)
\mathrm{pt}),  \ \  b\in B\setminus\{0\}.
\end{equation}
Since $E|_{\Gamma_0}\simeq\mathcal{O}_{\Gamma_0}^{\oplus2}$ by (\ref{E restr 2}), 
we may assume, after shrinking $B$ if necessary,  that
\begin{equation}\label{E|C0}
E|_{\Gamma_b}\simeq\mathcal{O}_{\Gamma_b}^{\oplus2}, \ \ \ b\in B.
\end{equation}
Formulas (\ref{Mb for b ne 0}) and (\ref{E|C0}) show that the curve $T=B$ and the sheaves
$\mathbf{F}:=\mathcal{O}_B\boxtimes E$ and $\mathbf{G}:=\boldsymbol{\iota}_*\boldsymbol{\tilde{M}}$ satisfy the 
hypothesis of Lemma \ref{F,G}. Hence by this Lemma, after possibly shrinking $B$,
the epimorphism $\varepsilon$ in (\ref{E0 triple 0}) extends to an epimorphism
$\boldsymbol{\varepsilon} : \mathcal{O}_B\boxtimes E
\twoheadrightarrow\boldsymbol{\iota}_*\boldsymbol{\tilde{M}}$, providing a short exact sequence
$$ 0 \to \boldsymbol{\tilde{E}} \to \mathcal{O}_B\boxtimes E \overset{\boldsymbol{\varepsilon}} \to \boldsymbol{\iota}_*\boldsymbol{\tilde{M}} \to 0. $$
Restricting it to any $b\in B$ and denoting
$\tilde{E}_b=\boldsymbol{\tilde{E}}|_{\{b\}\times\p3}$,
we obtain a short exact sequence
\begin{equation}\label{tilde Eb}
0 \to \tilde{E}_b\to E \to \tilde{M}_b\to0,\ \ \ b\in B.
\end{equation}
Since $[E]\in\cali(n-m)$, we deduce from (\ref{Et}) that $\tilde{E}_b$ is stable for any
$b\in B$ by Corollary \ref{rk2-mu-stable} and Lemmas \ref{stable1}, \ref{stable2}. We thus have a well-defined morphism $\phi:B\to\mathcal{M}(n)$, given by $b\mapsto[\tilde{E}_b]$. Now 
(\ref{Mb for b ne 0})-(\ref{tilde Eb}) imply that $[\tilde{E}_b]\in\cald(m,n)$ 
for $0\ne b\in B$. Hence
$B\subset\overline{\cald(m,n)}$. In particular, 
$[E_{t_1}]=[\tilde{E}_0]\in\overline{\cald(m,n)},\ t_1\in
T\setminus\{0\}$. Therefore, $T\subset\overline{\cald(m,n)}$ and, in particular, $[E_0]\in\overline{\cald(m,n)}$. 
This together with (\ref{Et in}) yields
$$ \overline{\cald(m,n)}\cap\left(\partial\cali(n)\setminus\mathrm{Sing}\mathcal{M}(n)\right)
\ne\varnothing . $$
From the irreducibility of $\overline{\cald(m,n)}$, we deduce that
$\overline{\cald(m,n)}\subset\partial\cali(n)$.
\end{proof}

We are finally ready to complete the proof of the main result of this section.

\begin{theorem}\label{Thm 4.6}
For each $n\ge2$ and each $m=1,\dots,n-1,$ $\overline{\cald(m,n)}\subset\partial\cali(n)$.
\end{theorem}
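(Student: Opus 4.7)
The theorem is essentially a packaging of the two main results already established in this section, namely Proposition \ref{Hooft} and Proposition \ref{Dmn in In}. My plan is to prove it by induction on $m$, with $n$ varying freely subject to $n \geq m$.

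For the base case $m = 1$, Proposition \ref{Hooft} directly yields $\overline{\cald(1,n)} \subset \partial\cali(n)$ for every $n \geq 1$. In particular this covers all pairs $(1,n)$ with $n \geq 2$ needed for the statement.

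For the inductive step, suppose that for some fixed $m \geq 2$ we have already established
$$\overline{\cald(m-1,k)} \subset \partial\cali(k) \quad \text{for every } k \geq m-1.$$
Fix any $n$ with $n \geq m$. Then $n - 1 \geq m - 1$, so the induction hypothesis applies to the pair $(m-1, n-1)$, giving $\overline{\cald(m-1,n-1)} \subset \partial\cali(n-1)$. This is precisely the hypothesis required to invoke Proposition \ref{Dmn in In} for the pair $(m,n)$, and its conclusion gives $\overline{\cald(m,n)} \subset \partial\cali(n)$. This completes the inductive step and hence the proof.

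The entire substance of the argument has been absorbed into the two propositions that precede the theorem; the main obstacle was the inductive step embodied in Proposition \ref{Dmn in In}, where one had to construct suitable one-parameter families of instanton sheaves degenerating to sheaves in $\cald(m,n)$ via the deformation of reducible curves $\Gamma' \cup \ell_0$ into smooth rational curves of degree $m$, together with the smoothness at $[E_0]$ coming from Proposition \ref{smooth in E0}. Given those tools, the theorem itself is just a formal induction and presents no additional difficulty.
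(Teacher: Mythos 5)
Your proof is correct and is essentially identical to the paper's own argument: induction on $m$, with Proposition \ref{Hooft} as the base case and Proposition \ref{Dmn in In} supplying the step $m-1 \rightsquigarrow m$. Your explicit formulation of the induction hypothesis (quantified over all charges $k \geq m-1$, so that it can be specialized to the pair $(m-1,n-1)$) is exactly the bookkeeping the paper's terse two-line proof leaves implicit, and is the right way to make it rigorous.
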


\begin{proof}
The result follows by induction on $m$. For $m=1$, the assertion is true by Proposition \ref{Hooft}.
The induction step $m-1\rightsquigarrow m$ is provided by Proposition \ref{Dmn in In}.
The theorem is proved.
\end{proof}

\section{Elementary transformations along elliptic quartic curves} \label{cicurves}

In this section, we consider the case in which the curve $\Sigma$ (notation from Section \ref{degen}) is an elliptic quartic curve, that is a complete intersection of two hypersurfaces $f_1=0$ and $f_2=0$ of degree $2$. The minimal locally free resolution of its structure sheaf has the form
\begin{equation}\label{resolution}
0 \to \op3(-4) \to \op3(-2)^{\oplus 2} \to \op3 \to \iota_*\osigma \to 0,
\end{equation}
where $\iota:\Sigma \to \p3$ is the inclusion map.

Following the procedure outlined by Proposition \ref{degdata}, let
$L\in{\rm Pic}^0(\Sigma)$. If $L$ is nontrivial, we have $h^p(\iota_*L)=h^p(\Sigma,L)=0$, for $p=0,1$, as desired.

Let $E$ be a locally free instanton sheaf of rank $2$ and charge $n$. In order to perform an elementary transformation of $E$ along $\Sigma$, we must find out whether there exists a surjective map $E\to (\iota_*L)(2)$. In Lemma \ref{hom e call} below, we give the affirmative answer in the case when $E$ is a null-correlation bundle, that is an instanton of charge 1.

Breaking (\ref{resolution}) into short exact sequences and tensoring by any instanton bundle $E$, we obtain:
$$ 0 \to E(-4) \to E(-2)^{\oplus 2} \to E\otimes I_\Sigma \to 0 ~~ {\rm and} ~~
0 \to E\otimes I_\Sigma \to E \to E|_\Sigma \to 0 , $$
where $I_\Sigma$ denotes the ideal sheaf of $\Sigma$.

Since $h^1(E(-2))=h^2(E(-2))=0$, we conclude from the first sequence that $H^1(E\otimes I_\Sigma)\simeq H^2(E(-4))$ for every elliptic quartic curve $\Sigma$; moreover, if $E$ is not the trivial instanton, then also $h^2(E\otimes I_\Sigma)=0$.  Now, moving to the second sequence above, we obtain for a nontrivial instanton $E$:
$$
0 \to H^0(E|_\Sigma) \to H^2(E(-4)) \to H^1(E) \to H^1(E|_\Sigma) \to 0.
$$
It is not difficult to check that $h^2(E(-4))=h^1(E)=2n-2$.

\begin{lemma}\label{hom e call}
If $E$ is a null-correlation bundle on $\p3$, then its restriction to any nonsingular elliptic
quartic curve $\Sigma\hookrightarrow\p3$ is either of the form $L_0\oplus L_0^{\dual}$ for some 
nontrivial $L_0\in{\rm Pic}(\Sigma)$, or the nontrivial extention of a nontrivial
$L_0\in{\rm Pic}^0(\Sigma)$ of order $2$ by itself. In either case, we have
$$ \Hom(E,(\iota_*L)(2)) \simeq H^0(\Sigma,L_0\otimes L(8\pt))\oplus
H^0(\Sigma,L_0^{\dual}\otimes L(8\pt)). $$
\end{lemma}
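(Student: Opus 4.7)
The plan is to exploit that $\Sigma$ is an elliptic curve and reduce to Atiyah's classification of rank-$2$ bundles on elliptic curves, combined with a cohomological vanishing forced by the null-correlation hypothesis.

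First I will observe that, since $\det E=\calo_{\p3}$ and $E$ has charge $1$, the restricted bundle $E|_\Sigma$ has rank $2$, degree $0$, and trivial determinant; in particular $(E|_\Sigma)^{\dual}\simeq E|_\Sigma$. By Atiyah's classification, any such bundle on the elliptic curve $\Sigma$ falls into one of two families: either (a)~$E|_\Sigma\simeq L_0\oplus L_0^{\dual}$ for some $L_0\in\mathrm{Pic}^0(\Sigma)$, or (b)~$E|_\Sigma$ is (up to isomorphism) the unique nontrivial self-extension $0\to L_0\to E|_\Sigma\to L_0\to 0$ with $L_0^{\otimes 2}\simeq\calo_\Sigma$. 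The four-term cohomology sequence displayed just before the lemma, specialized to charge $n=1$, gives $h^0(E|_\Sigma)=h^2(E(-4))=h^1(E)=2n-2=0$. This vanishing eliminates the subcases $L_0=\calo_\Sigma$ in each family: they would contribute $h^0(\calo_\Sigma^{\oplus 2})=2$ in (a) and $h^0(F_2)=1$ in (b). The surviving possibilities are exactly those stated in the lemma.

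For the Hom computation I will use adjunction and the projection formula:
\[
\Hom(E,(\iota_*L)(2))\simeq \Hom_\Sigma\bigl(E|_\Sigma,\,L\otimes\iota^*\calo_{\p3}(2)\bigr)
\simeq H^0\bigl(\Sigma,(E|_\Sigma)^{\dual}\otimes L(8\pt)\bigr),
\]
where the identification $\iota^*\calo_{\p3}(2)\simeq\calo_\Sigma(8\pt)$ uses $\deg\Sigma=4$. Via $(E|_\Sigma)^{\dual}\simeq E|_\Sigma$ this becomes $H^0(\Sigma,E|_\Sigma\otimes L(8\pt))$. In case (a) the direct-sum decomposition of $E|_\Sigma$ immediately splits $H^0$ into the two summands in the statement. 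In case (b), $L_0=L_0^{\dual}$, and tensoring the self-extension by the degree-$8$ line bundle $L(8\pt)$ yields a short exact sequence in which every term has vanishing $H^1$; counting via Riemann--Roch shows that both $H^0(E|_\Sigma\otimes L(8\pt))$ and $H^0(L_0\otimes L(8\pt))^{\oplus 2}$ have dimension $16$, giving the asserted (non-canonical) isomorphism of vector spaces.

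The main obstacle is essentially bookkeeping: one must pay attention to invoke Atiyah's classification in the correct regime (rank $2$, degree $0$, trivial determinant) and check that the vanishing $h^0(E|_\Sigma)=0$ excludes precisely the $L_0=\calo_\Sigma$ subcases in both families. Everything else reduces to standard adjunction and a degree computation on $\Sigma$.
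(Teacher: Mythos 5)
Your proof is correct and follows essentially the same route as the paper's: the vanishing $h^0(E|_\Sigma)=h^1(E|_\Sigma)=0$ (read off the restriction sequence at charge $n=1$) combined with Atiyah's classification gives the dichotomy, and adjunction plus the twisted extension sequence gives the $\Hom$ computation, with your Riemann--Roch dimension count in case (b) being a trivial variant of the paper's splitting of the short exact sequence of $H^0$'s. One small imprecision worth noting: the decomposable case of Atiyah's classification for trivial determinant a priori allows $L_0\oplus L_0^\dual$ with $\deg L_0\neq 0$, so it is again the vanishing $h^0(E|_\Sigma)=0$, not the classification itself, that forces $L_0\in\mathrm{Pic}^0(\Sigma)$ there.
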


\begin{proof}
The restriction $E|_\Sigma$ is a rank $2$ bundle on $\Sigma$ with trivial determinant; if $E$ is a null-correlation bundle, then also $h^0(\Sigma,E|_\Sigma)=h^1(\Sigma,E|_\Sigma)=0$. The first claim now follows from Atiyah's classification of rank $2$ bundles on nonsingular elliptic curves.

Next, note that $\Hom(E,(\iota_*L)(2))\simeq H^0(\Sigma,E^\dual|_\Sigma\otimes L(8\pt))$. If
$E|_\Sigma\simeq L_0 \oplus L_0^{\dual}$, the second claim follows immediately. On the orther hand, if $E|_\Sigma$ is an extension of the form
$$ 0 \to L_0 \to E|_\Sigma \to L_0 \to 0 ~~{\rm with}~~ L_0^2\simeq\osigma, $$
then, twisting by $L(8\pt)$, we obtain the cohomology exact sequence
$$ 0 \to H^0(\Sigma,L_0\otimes L(8\pt)) \to H^0((E|_\Sigma)^\dual\otimes L(8\pt)) \to
H^0(\Sigma,L_0\otimes L(8\pt)) \to 0 , $$
and the second claim also follows.
\end{proof}

In particular, we conclude that for any null-correlation bundle $E$ and any elliptic quartic curve
$\iota:\Sigma\hookrightarrow\p3$, there exists a surjective map
$\varphi:E\to (\iota_*L)(2)$. The kernel sheaf $E':=\ker\varphi$ is a rank $2$ instanton of charge $5$, since $\Sigma$ has degree $4$; note, in addition, that $E'$ is $\mu$-stable by Lemma \ref{mu-semistable}, thus in particular $[E']\in\calm(5)$.

Our next goals are evaluating the dimension and proving
the generic smoothness of the locus of all the instanton
sheaves obtained by elementary transformations 
from null-correlation bundles along elliptic quartic curves.

Let $\cale_4$ denote the set of nonsingular elliptic quartic curves in $\p3$. It can be regarded
as an open subset of the Grassmannian $G(2,S^2V)$, where V is the $4$-dimensional complex vector 
space such that $\p3=\mathbb{P}(V)$. This is a family of nonsingular elliptic curves, so let
$j:\calj_4\to \cale_4$ denote the relative Jacobian variety, i.e.
$j^{-1}(\Sigma)={\rm Pic}^0(\Sigma)$, and denote
$\calj^{\rm o}_4:=\calj_4 \setminus \{{\rm zero~~section}\}$. 
A point of $\calj^{\rm o}_4$ can be thought of as a pair $(\Sigma,[L])$, in which $\Sigma$ is a
smooth elliptic quartic curve and $L$ is a nontrivial line bundle of degree $0$ on $\Sigma$, so that $[L]\in{\rm Pic}^0(\Sigma)$. Equivalently, it can be thought of as the isomorphism class of the sheaf $\iota_*L$ on $\p3$, where $\iota:\Sigma\hookrightarrow\p3$ is the natural embedding.

Note that $\calj^{\rm o}_4$ is an irreducible, quasiprojective variety of dimension $17$.

Consider now the following subset of $\calm(5)$:
$$
\calq_5 := \{ [E]\in\calm(5) ~|~ [E^{\dual\dual}]\in\cali(1), ~~
[(E^{\dual\dual}/E)(-2)]\in \calj_4^{\rm o}\}.
$$
Let $\overline{\calq_5}$ denote the closure of $\calq_5$ in $\calm(5)$.

\begin{theorem}\label{thm-quartics}
$\overline{\calq_5}$ is an irreducible component of $\calm(5)$ of dimension $37$, distinct from the instanton component $\overline{\cali(5)}$.
\end{theorem}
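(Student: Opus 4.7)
The plan is to exhibit $\overline{\calq_5}$ as the closure of the image of an irreducible $37$-dimensional parameter variety, and to prove that $\calm(5)$ is smooth of dimension $37$ at the generic point of this image. First I would construct a parameter variety $\calp$ parametrizing triples $(E,(\Sigma,L),[\varphi])$, where $E\in\cali(1)$ is a null-correlation bundle, $(\Sigma,L)\in\calj_4^{\rm o}$, and $[\varphi]\in\PP\Hom(E,(\iota_*L)(2))$ is the class of a surjection; this is analogous to the variety $\calp(m,n)$ of Proposition \ref{variety D}. By Lemma \ref{hom e call} together with Riemann--Roch on the elliptic curve $\Sigma$ (applied to the two degree-$8$ line bundles $L_0\otimes L(8\pt)$ and $L_0^{\smallvee}\otimes L(8\pt)$), we have $\dim\Hom(E,(\iota_*L)(2))=16$ at the generic point, so the surjectivity locus is an open subset of a Severi--Brauer variety with fiber $\mathbb P^{15}$ over an open subset of $\cali(1)\times\calj_4^{\rm o}$. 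Since $\cali(1)$ (dim $5$) and $\calj_4^{\rm o}$ (dim $17$) are irreducible, $\calp$ is irreducible of dimension $5+17+15=37$. The classifying morphism $\calp\to\calm(5)$, $(E,(\Sigma,L),[\varphi])\mapsto[\ker\varphi]$, is generically injective because $F^{\smallvee\smallvee}\simeq E$ and $F^{\smallvee\smallvee}/F\simeq(\iota_*L)(2)$ recover the data. Consequently $\calq_5$ is irreducible of dimension~$37$.

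Next I would establish smoothness of $\calm(5)$ along $\calq_5$ by applying Lemma \ref{global exts} with $n=1$ to $F\in\calq_5$. The hypothesis $h^1((\iota_*L^2)(4))=0$ is automatic, since $L^2\otimes\calo_\Sigma(4)$ has degree $16>0$ on the elliptic curve~$\Sigma$. Formula \eqref{inhom dim} yields $h^1(\inhom(F,F))=8+16-3=21$, so one needs $h^0(\inext^1(F,F))=16$ and $H^1(\inext^1(F,F))=0$ to obtain $\dim\Ext^1(F,F)=37$ and $\Ext^2(F,F)=0$. This reduces to a computation of the sheaves in the exact sequence \eqref{inhom sqc 2}, parallel to Proposition \ref{prop 4.1} but with the normal bundle $N_{\Sigma/\p3}\simeq\calo_\Sigma(2)^{\oplus 2}$ (an elliptic quartic is a complete intersection of two quadrics, whence both summands have degree $8$) and $\det N_{\Sigma/\p3}\simeq\calo_\Sigma(4)$ of degree $16$. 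Using the restriction sequence \eqref{ratcurve-ses} tensored with $\iota_*\calo_\Sigma$ together with the splitting $E|_\Sigma\simeq L_0\oplus L_0^{\smallvee}$ from Lemma \ref{hom e call}, one computes $F|_\Sigma$ and then identifies the four outer terms of \eqref{inhom sqc 2} as pushforwards of line bundles (or sums of line bundles) on $\Sigma$ of positive degree. The main obstacle of the proof is checking that every sheaf occurring in the resulting filtration of $\inext^1(F,F)$ has degree large enough to force the vanishing $H^1=0$; this is exactly the step where the geometry of elliptic quartics (as opposed to rational curves) has to be used carefully, but all the twists picked up along the way come from $\calo_\Sigma(k)$ with $k\geq 1$, so positivity is preserved.

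Finally I would conclude as follows. Since $[F]$ is a smooth point of $\calm(5)$ of local dimension equal to $\dim\Ext^1(F,F)=37$, and $\calq_5$ is irreducible of dimension $37$ passing through $[F]$, the closure $\overline{\calq_5}$ is an irreducible component of $\calm(5)$. To distinguish it from $\overline{\cali(5)}$, note that every $F\in\calq_5$ has $F^{\smallvee\smallvee}/F=(\iota_*L)(2)\ne0$, so $\calq_5$ consists entirely of non-locally-free sheaves, whereas $\cali(5)\subset\overline{\cali(5)}$ is open and consists of vector bundles. Were $\overline{\calq_5}=\overline{\cali(5)}$, the dense constructible subset $\calq_5$ and the dense open subset $\cali(5)$ of this common $37$-dimensional irreducible variety would have nonempty intersection, contradicting the dichotomy locally free vs.\ not. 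Hence $\overline{\calq_5}$ is a component of $\calm(5)$ distinct from $\overline{\cali(5)}$, completing the proof.
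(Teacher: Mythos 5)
Your overall route coincides with the paper's: the dimension count via the fibration of $\calq_5$ over $\cali(1)\times\calj_4^{\rm o}$ with fibers open in $\PP\Hom(E,(\iota_*L)(2))\simeq\p{15}$ is exactly the argument in the paper's proof of the theorem, and your smoothness computation via Lemma \ref{global exts} and the sequence \eqref{inhom sqc 2} is precisely the content of the paper's Proposition \ref{q=smooth pt}, stated there separately after the theorem. Merging the two is in fact a good idea: without the tangent-space bound $\dim\Ext^1(F,F)=37$ at a point of $\calq_5$, knowing only that $\overline{\calq_5}$ is irreducible of dimension $37$ and generically non-locally-free does not by itself exclude that it sits inside some larger component other than $\overline{\cali(5)}$.

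However, the mechanism you invoke for the decisive vanishing is wrong. You claim that ``all the twists picked up along the way come from $\calo_\Sigma(k)$ with $k\geq 1$, so positivity is preserved.'' Carrying out the computation of \eqref{inhom sqc 2} (as in Proposition \ref{q=smooth pt}) one finds
$$ \inext^1(F,F)\;\simeq\;(\iota_*\calo_\Sigma)(2)^{\oplus2}\oplus\iota_*L^{-2}, $$
and the summand $\iota_*L^{-2}$ has degree \emph{zero}: it is the kernel of the surjection $(\iota_*L^{-1})(2)\otimes E|_\Sigma\onto(\iota_*\calo_\Sigma)(4)$, in which the twist $\calo_\Sigma(4)$ is exactly cancelled against the determinant, since $\det E|_\Sigma\simeq\calo_\Sigma$. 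For this degree-zero piece one has $h^0=h^1=0$ if and only if $L^{-2}\not\simeq\calo_\Sigma$, i.e.\ if and only if $L$ is not a $2$-torsion point of ${\rm Pic}^0(\Sigma)$; when $L^2\simeq\calo_\Sigma$ one gets $h^0(\iota_*L^{-2})=h^1(\iota_*L^{-2})=1$, and then both your count $h^0(\inext^1(F,F))=16$ and the vanishing $H^1(\inext^1(F,F))=0$ fail. So no positivity argument can work uniformly on $\calq_5$: the vanishing is a genericity statement, and the hypothesis $L^2\not\simeq\calo_\Sigma$ (which the paper imposes explicitly in Proposition \ref{q=smooth pt}) must be invoked, the correct reason being that a \emph{nontrivial} degree-zero line bundle on an elliptic curve has no cohomology. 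Since you only need smoothness at the generic point of $\calq_5$, your proof is repaired by restricting to non-$2$-torsion $L$ and replacing ``positivity'' by this nontriviality argument; but as written the key step rests on a false claim. (A milder caveat: the splitting $E|_\Sigma\simeq L_0\oplus L_0^\dual$ you use is only one of the two cases of Lemma \ref{hom e call}; the paper's computation of $F|_\Sigma$ avoids this dichotomy by using only $\det E|_\Sigma\simeq\calo_\Sigma$ together with the $\intor_1$ sequence.)
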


\begin{proof}
Consider the map $\varpi:\calq_5\to\cali(1)\times \calj_4^{\rm o}$ given by
$E \mapsto \big([E^{\dual\dual}], [(E^{\dual\dual}/E)(-2)]\big)$, and observe that it is surjective. Indeed, given a null-correlation bundle $F\in\cali(1)$, and a nonsingular elliptic quartic curve $\iota:\Sigma\hookrightarrow\p3$ equipped with a nontrivial $L\in{\rm Pic}^0(\Sigma)$, so that $[\iota_*L]\in \calj^{\rm o}_4$, we know from Lemma \ref{hom e call} that there exists a surjective map $\varphi:F\to(\iota_*L)(2)$. Then, as we have seen above, $E:=\ker\varphi$ is an instanton sheaf from $\calm(5)$ such that $E^{\dual\dual}\simeq F$ and $E^{\dual\dual}/E\simeq (\iota_*L)(2)$, i.e. $[E]\in\calq_5$.

The fiber $\varpi^{-1}(F,\iota_*L)$ consists precisely of all surjective maps $F\to(\iota_*L)(2)$ up to homothety, so it forms an open subset of $\mathbb{P}(\Hom(F,(\iota_*L)(2)))$, which, again by Lemma \ref{hom e call}, has dimension $15$ for every $[F]\in\cali(1)$ and every $[\iota_*L]\in \calj_4^{\rm o}$.

It follows that $\calq_5$ is an irreducible quasiprojective variety of dimension
$$ \dim \cali(1) + \dim \calj_4^{\rm o} + \dim \mathbb{P}\Hom(F,(\iota_*L)(2)) = 37. $$
We have concluded that $\overline{\calq_5}$ is an irreducible subvariety of $\calm(5)$ of the same dimension as $\overline{\cali(5)}$ and whose generic point represents a non locally free instanton sheaf. This implies that $\overline{\calq_5}$ and $\overline{\cali(5)}$ are distinct components of $\calm(5)$.
\end{proof}

In particular, we have the following interesting consequence.

\begin{corollary}
The moduli space of instanton sheaves of rank $2$ and charge $5$ is reducible.
\end{corollary}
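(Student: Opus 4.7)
The plan is to deduce this corollary directly from Theorem \ref{thm-quartics} by observing that the generic points of both $\overline{\cali(5)}$ and $\overline{\calq_5}$ represent instanton sheaves. For $\overline{\cali(5)}$ this is by definition: $\cali(5)$ is the moduli of locally free rank $2$ instantons of charge $5$, and these are instanton sheaves in the sense of Section~2.

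For $\overline{\calq_5}$, I would first verify that every $[E]\in\calq_5$ is itself an instanton sheaf. By construction, such an $E$ arises as the kernel of a surjection $\varphi:F\to(\iota_*L)(2)$, where $F$ is a null-correlation bundle (a rank $2$ locally free instanton of charge~$1$) and $L$ is a nontrivial line bundle of degree $0$ on a smooth elliptic quartic curve $\iota:\Sigma\hookrightarrow\p3$. Since $L$ is nontrivial of degree $0$ on an elliptic curve, we have $h^0(\iota_*L)=h^1(\iota_*L)=0$, so $(\Sigma,L,\varphi)$ is a set of elementary transformation data in the sense of Section~\ref{degen}. Proposition~\ref{degdata} then guarantees that $E$ is a rank $2$ instanton sheaf of charge $1+4=5$.

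Having verified this, the corollary follows immediately: Theorem~\ref{thm-quartics} exhibits $\overline{\cali(5)}$ and $\overline{\calq_5}$ as two distinct irreducible components of $\calm(5)$, and we have just seen that a dense open subset of each consists of rank~$2$ instanton sheaves of charge~$5$. Hence the locus of such instanton sheaves in $\calm(5)$ meets two different components in dense opens, and is therefore reducible. No serious obstacle is anticipated; the substantive work is concentrated in Theorem~\ref{thm-quartics}, and this corollary merely records the observation that the new component $\overline{\calq_5}$ generically parametrizes instanton sheaves, not just arbitrary Gieseker-semistable torsion free sheaves.
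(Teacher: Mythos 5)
Your proposal is correct and follows essentially the same route as the paper, which states this corollary as an immediate consequence of Theorem \ref{thm-quartics}: the paper has already verified (via Proposition \ref{degdata} and Lemma \ref{hom e call}, just before the theorem) that the sheaves parametrized by $\calq_5$ are instanton sheaves of charge $5$, so the instanton locus meets the two distinct components $\overline{\cali(5)}$ and $\overline{\calq_5}$ in dense open subsets and is therefore reducible. Your write-up merely makes explicit the verification that the paper leaves implicit, which is a fine way to record the argument.
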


\begin{remark}
It is not clear, however, whether the moduli space of instanton sheaves of rank $2$ and charge $5$ is connected; in other words, we do not know whether the intersection
$\overline{\calq}\cap\overline{\cali(5)}$ contains instanton sheaves.
\end{remark}

Finally, it is interesting to note that the generic point of $\overline{\calq}$ is a smooth point of~$\calm(5)$.

\begin{proposition}\label{q=smooth pt}
Let $F$ be an instanton sheaf obtained, as above, by an elementary transformation of a null-correlation bundle along a pair $(\Sigma,L)$ where $\Sigma$ is an elliptic quartic curve in $\p3$ and $[L]\in \mathcal{J}^{\rm o}_4$,
$L^2\not\simeq\mathcal{O}_{\Sigma}$. Then $\dim\ext^1(F,F) = 37$ and $\dim\ext^2(F,F) = 0$, so $[F]$ is a smooth point of $\calm(5)$.
\end{proposition}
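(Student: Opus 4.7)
The plan is to follow the strategy of Proposition \ref{prop 4.1}, replacing the rational curve $\Gamma$ by the elliptic quartic $\Sigma$ and using Lemma \ref{global exts} to reduce the global Ext computations to cohomology of local $\inext$-sheaves supported on $\Sigma$. First, I would verify the hypothesis of Lemma \ref{global exts}: since $L\in\mathrm{Pic}^0(\Sigma)$, the sheaf $L^2\otimes\calo_\Sigma(4)$ has degree $16$ on the genus-one curve $\Sigma$, so by Riemann--Roch $h^0((\iota_*L^2)(4))=16$ and $h^1((\iota_*L^2)(4))=0$. Formula \eqref{inhom dim} with $n=1$ (the charge of the null-correlation bundle $E=F^{\dual\dual}$) then yields $h^1(\inhom(F,F))=8+16-3=21$.

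Next, I would analyze the five-term sequence \eqref{inhom sqc 2}. Dualizing the defining sequence \eqref{ses-deformation} against $\calo_{\p3}$ and against $(\iota_*L)(2)$, and using that $\Sigma$ is a codimension-$2$ lci with $\det N_{\Sigma/\p3}\simeq\calo_\Sigma(4)|_\Sigma$, one identifies
\[
\inext^1(F,\calo_{\p3}) \simeq \iota_*(L^{\dual}(8\pt)),\qquad
\inext^1(F,(\iota_*L)(2)) \simeq \iota_*\calo_\Sigma(16\pt).
\]
The middle term $\inhom(F,(\iota_*L)(2))$ is computed by applying $\inhom(-,(\iota_*L)(2))$ to \eqref{ses-deformation}, using the normal bundle identification $\inext^1((\iota_*L)(2),(\iota_*L)(2))\simeq\iota_*N_{\Sigma/\p3}\simeq\iota_*\calo_\Sigma(8\pt)^{\oplus 2}$ together with Lemma \ref{hom e call} to describe the restriction $E|_\Sigma$.

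Each sheaf appearing in this analysis is the pushforward of a sheaf on $\Sigma$ of positive total degree (at least $8$), so its $H^1$ vanishes and its $H^0$ is controlled by Riemann--Roch; chasing through the short exact pieces of \eqref{inhom sqc 2} then yields $h^0(\inext^1(F,F))=16$ and $h^1(\inext^1(F,F))=0$, whence
\[
\dim\ext^1(F,F) = 16+21 = 37,\qquad \dim\ext^2(F,F) = 0.
\]

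The main obstacle will be the five-term shape of \eqref{inhom sqc 2}: in contrast with Proposition \ref{prop 4.1}, neither outer term vanishes. The right-hand surjection $\iota_*(L^{\dual}(8\pt)\otimes E|_\Sigma)\onto\iota_*\calo_\Sigma(16\pt)$ is induced by the restriction of the elementary transformation datum $\varphi: E\to(\iota_*L)(2)$ to $\Sigma$, and a determinant computation on $\Sigma$ shows that its kernel is $\iota_*L^{-2}$, the pushforward of a degree-zero line bundle. The hypothesis $L^2\not\simeq\calo_\Sigma$ is precisely what guarantees that this kernel is non-trivial, so that both $H^0$ and $H^1$ of $L^{-2}$ vanish; without this, an extra contribution would appear at the $H^1$ stage, producing $\ext^2(F,F)\neq 0$ and breaking smoothness.
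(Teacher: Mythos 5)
Your proposal is correct and takes essentially the same route as the paper's proof: both verify the hypothesis of Lemma \ref{global exts} so that formula \eqref{inhom dim} gives $h^1(\inhom(F,F))=8+16-3=21$, then chase the five-term sequence \eqref{inhom sqc 2}, identify the kernel of the right-hand surjection as $\iota_*L^{-2}$ by the determinant argument, and use $L^2\not\simeq\calo_\Sigma$ to get $h^0(\inext^1(F,F))=16$, $h^1(\inext^1(F,F))=0$, whence $\dim\ext^1(F,F)=16+21=37$ and $\ext^2(F,F)=0$. The only deviations are cosmetic: the paper reaches the middle term $\inhom(F,(\iota_*L)(2))$ by first computing $F|_\Sigma\simeq \iota_*L^{\oplus2}\oplus(\iota_*L^{-1})(-2)$ from a $\intor_1$-sequence rather than by dualizing \eqref{ses-deformation} directly, and your shorthand $\calo_\Sigma(8\pt)$, $\calo_\Sigma(16\pt)$ must be read as the specific bundles $\calo_{\p3}(2)|_\Sigma$, $\calo_{\p3}(4)|_\Sigma$ (degree alone does not determine a line bundle on an elliptic curve), which your determinant computation implicitly does, since it correctly pins the kernel down to exactly $L^{-2}$ rather than to an arbitrary degree-zero bundle.
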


\begin{proof}
First, note that, as $\Sigma$ is a complete intersection of two quadrics, the isomorphism (\ref{id30}) yields
\begin{equation}\label{id300}
\inext^1(F,(\iota_*L)(2))\simeq(\iota_*\mathcal{O}_{\Sigma})(4).
\end{equation}
Next, applying the functor $\inhom(-,E)$ to the sequence 
$$ 0 \to F \to E \to (\iota_*L)(2) \to 0, $$
where $E:=F^{\dual\dual}$ is a null-correlation bundle, and using the isomorphism
$\inext^2(\mathcal{O}_{\Sigma},\op3)\simeq\iota_*\det N_{\Sigma/\p3}\simeq(\iota_*\mathcal{O}_{\Sigma})(4)$, we obtain
\begin{equation}\label{id301}
\inext^1(F,E)\simeq\inext^2((\iota_*L)(2),E)\simeq
(\iota_*L^{-1})(-2)\otimes E|_{\Sigma}\otimes\inext^2(\mathcal{O}_{\Sigma},\op3)
\end{equation}
$$
\simeq (\iota_*L^{-1})(2)\otimes E|_{\Sigma}.
$$
Next, using the isomorphism $\intor_1((\iota_*L)(2),\iota_*\mathcal{O}_{\Sigma})\simeq
(\iota_*L)(2)\otimes N_{\Sigma/\p3}^{\dual}\simeq\iota_*L^{\oplus2}$,
similarly to (\ref{sqc2}), we obtain an exact sequence
$$ 0 \to \iota_*L^{\oplus2} \to F|_{\Sigma} \to E|_{\Sigma} \to (\iota_*L)(2) \to 0. $$
Since $\det E|_{\Sigma}\simeq\mathcal{O}_{\Sigma}$, it follows that
\begin{equation}\label{ker1}
\ker\left\{E|_{\Sigma}\twoheadrightarrow(\iota_*L)(2)\right\} \simeq (\iota_*L^{-1})(-2).
\end{equation}
Hence the above exact sequence yields the following exact triple:
$$ 0 \to \iota_*L^{\oplus2}\to F|_{\Sigma} \to (\iota_*L^{-1})(-2) \to 0. $$
As $[\iota_*L]\in \mathcal{J}^{\rm o}_4$, it follows that $h^1((\iota_*L^2)(2))=h^1(\Sigma,L^2(8\pt))=0$. Therefore the last sequence splits, i.e. $F|_{\Sigma}\simeq\iota_*L^{\oplus2}\oplus(\iota_*L^{-1})(-2)$. This yields
$\inhom(F,(\iota_*L)(2))\simeq(\iota_*\mathcal{O}^{\oplus2}_{\Sigma})(2)\oplus(\iota_*L^2)(4)$.
Substituting this relation together with (\ref{id300}) and (\ref{id301}) into (\ref{inhom sqc 2}), we obtain the exact sequence
\begin{multline*}
0\to(\iota_*L^2)(4)\to\iota_*\mathcal{O}_{\Sigma}(2)^{\oplus2}\oplus(\iota_*L^2)(4)
\to\inext^1(F,F)\to\\ (\iota_*L^{-1})(2)\otimes E|_{\Sigma}\to(\iota_*\mathcal{O}_{\Sigma})(4)\to0.
\end{multline*}
Similarly to (\ref{ker1}), one has
$\ker\left\{ (\iota_*L^{-1})(2)\otimes E|_{\Sigma} \twoheadrightarrow(\iota_*\mathcal{O}_{\Sigma})(4) \right\} \simeq \iota_*L^{-2}$.
Thus the last exact sequence provides the exact triple
$$ 0 \to (\iota_*\mathcal{O}_{\Sigma})(2)^{\oplus2} \to \inext^1(F,F) \to \iota_*L^{-2} \to 0. $$
Moreover, since $h^1((\iota_*L^{2})(2)^{\oplus2})=0$, this triple splits, and 
$$ \inext^1(F,F) \simeq (  \iota_*\mathcal{O}_{\Sigma})(2)^{\oplus2}\oplus\iota_*L^{-2}. $$
Since by assumption $L^{-2}\not\simeq\mathcal{O}_{\Sigma}$, it follows that $h^1(\Sigma,L^{-2})=0$,
and the above isomorphism implies $h^1(\inext^1(F,F))=0$. This together with Lemma \ref{global exts} yields the Proposition.
\end{proof}

We conclude by remarking that one can also perform elementary transformations of the trivial bundle along plane cubic curves, a fact also realized by Perrin in \cite{Per3,Per4}, as mentioned in the Introduction. 

Indeed, let $\iota:\Sigma \to \p3$ be a plane cubic curve. For any $L\in{\rm Pic}^0(\Sigma)$ one can find surjective morphisms $\varphi:\op3^{\oplus2}\to (\iota_*L)(2)$, since $\Hom(\op3^{\oplus2},(\iota_*L)(2))\simeq H^0(\Sigma,L(6\pt))^{\oplus2}$. Assuming that $L$ is nontrivial, the kernel sheaf $E':=\ker\varphi$ is an instanton sheaf of charge $3$; it is also stable, by Lemma \ref{stable1}, therefore $[E']\in\calm(3)$. 

\begin{remark}
One can also consider surjective morphisms $\varphi:\op3^{\oplus2}\to (\iota_*\osigma)(2)$. In this case, the kernel $E':=\ker\varphi$ is a stable rank $2$ torsion free sheaf with $c_1(E')=c_3(E')=0$ and $c_2(E')=3$, but is not an instanton sheaf. 
\end{remark}

Next, let now $\cale_3$ be the set of nonsingular plane cubic curves, regarded as an open subset of
$\mathbf{P}(S^3(T_{(\p3)^\vee}(-1)))$,
let $\calj_3$ be the relative Jacobian over $\cale_3$, and  $\calj^{\rm o}_3$ the complement of the zero section. Consider the following set:
$$ \calq_3 := \{ [E]\in\calm(3) ~|~ E^{\dual\dual}\simeq\op3^{\oplus2}, ~~ 
[(E^{\dual\dual}/E)(-2)]\in \calj_3^{\rm o}\}. $$
Similarly to Theorem \ref{thm-quartics}, one can show that $\overline{\calq_3}$ is an irreducible component of $\calm(3)$ of dimension
$$ \dim \calj_3^{\rm o} + h^0(\Sigma,L(6\pt)^{\oplus2}) - \dim {\rm Aut}(\op3^{\oplus2}) = 21 .$$
It follows that $\overline{\calq_3}$ does not coincide with the instanton component $\overline{\cali(3)}$, since $\dim\cali(3)=21$ as well. 

The proof of Proposition \ref{q=smooth pt} also works in this case, and one can show that the instanton
sheaves obtained as above, by elementary transformations of the trivial rank 2 bundle along a
pair $(\Sigma,L)$ where $\Sigma$ is a plane cubic curve in $\p3$ and $[L]\in \calj^{\rm o}_3$,
$L^2\not\simeq\mathcal{O}_{\Sigma}$, are smooth points of $\calm(3)$. 

Furthermore, as mentioned at the Introduction, Perrin provides additional information on the intersection $\overline{\calq_3}\cap\overline{\cali(3)}$ of these two irreducible components of $\calm(3)$. In fact, he has shown that if $L$ is a theta characteristic on $\Sigma$ (i.e. if $L^2=\mathcal{O}_{\Sigma}$), then the sheaves $E$ given by short exact sequences of the form
$$ 0 \to E \to \op3^{\oplus3} \to (\iota_*L)(2) \to 0 $$
lie in the closure of the instanton component $\overline{\cali(3)}$, and form an irreducible component of the instanton boundary $\partial\cali(3)$, see \cite[Thm. 0.1]{Per3}.


\begin{thebibliography}{99}


\bibitem{B}
Barth, W.:
Some properties of stable rank-2 vector bundles on $\mathbb P_n$.
Math. Ann. {\bf 226} (1977), 125–150. 


\bibitem{CO}
Costa, L., Ottaviani, G.:
Nondegenerate multidimensional matrices and instanton bundles.
Trans. Amer. Math. Soc. {\bf 355} (2003), 49--55.


\bibitem{EV}
Eisenbud, D., Van de Ven, A.:
On the Normal Bundles of Smooth Rational Space Curves.
Math. Ann. {\bf 256} (1981), 453--463.


\bibitem{FJ2}
Frenkel, I. B., Jardim, M.:
Complex ADHM equations, and sheaves on $\p3$.
J. Algebra {\bf 319} (2008), 2913--2937.

\bibitem{GS}
Gruson, L., Skiti, M.:
3-instantons et réseaux de quadriques. 
Math. Ann. {\bf 298} (1994), 253–273. 


\bibitem{H-refl}
Hartshorne, R.:
Stable reflexive sheaves.
Math. Ann. {\bf 254} (1980), 121--176.


\bibitem
{HH1} Hartshorne, R., Hirschowitz, A.:
Cohomology of a general instanton bundle,
Ann. Sci. Ecole Norm. Sup. (4) {\bf 15}, 365--390  (1982).


\bibitem{HH2}
Hartshorne, R., Hirschowitz, A.: Smoothing algebraic space curves.
Springer Lecture Notes in Math. Vol. 1124 (1985), 98--131.


\bibitem{HL}
Huybrechts, D., Lehn, M.:
The Geometry of Moduli Spaces of Sheaves, 2nd ed.,
Cambridge Math. Lib., Cambridge University Press, Cambridge, 2010.

\bibitem{J-i}
Jardim, M.:
Instanton sheaves on complex projective spaces.
Collec. Math. {\bf 57} (2006), 69--91.


\bibitem{JG}
Jardim, M., Gargate, M.:
On the singular locus of rank $2$ instanton sheaves.
Preprint (2014),  arXiv:1407.0897.


\bibitem{JV}
Jardim, M., Verbitsky, M.:
Trihyperkähler reduction and instanton bundles on $\p3$.
Compositio Math. {\bf 150} (2014), 1836--1868.


\bibitem{MaTr2}
Maruyama, M., Trautmann, G.:
Limits of instantons.
Internat. J. Math {\bf 3} (1992), 213--276.


\bibitem{NT}
Narasimhan, M. S., Trautmann, G.:
Compactification of $M_{\mathbb P_3}(0,2)$ and Poncelet pairs of conics.
Pacific J. Math. {\bf 145} (1990), 255–365. 


\bibitem{Per1}
Perrin, N.: Une composante du bord des instantons de degré 3. C. R. Acad. Sci. Paris Sér. I Math. {\bf 330} (2000), no. 3, 217-220.


\bibitem{Per3}
Perrin, N.: Deux composantes du bord de $I_3$. Bull. Soc. Math. France {\bf 130} (2002), 537–572.

\bibitem{Per4}
Perrin, N.: Déformations de fibrés vectoriels sur les variétés de dimension 3. Manuscripta Math. {\bf 116} (2005), 449–474.

\bibitem{Rao}
Rao, A. P.:
A family of vector bundles on $\mathbb P_3$. Space curves (Rocca di Papa, 1985), 208–231,
Lecture Notes in Math., 1266, Springer, Berlin, 1987.


\bibitem{T1}
Tikhomirov, A.S. Moduli of mathematical instanton vector bundles with odd
$c_2$ on projective space. Izvestiya: Mathematics {\bf 76}:5 (2012), 991-1073.

\bibitem{T2}
Tikhomirov, A.S. Moduli of mathematical instanton vector bundles with even
$c_2$ on projective space. Izvestiya: Mathematics {\bf 77}:6 (2013), 1331-1355.



\end{thebibliography}
\end{document}